\newtheorem{theorem}{Theorem}
\newtheorem{conjecture}{Conjecture}
\title{Generating isospectral but not isomorphic quantum graphs}
\author{Mats-Erik Pistol\\ Solid State Physics\\ Box 118, Lund University, S-221 00 Lund SWEDEN\\ \\ \texttt{mats-erik.pistol@ftf.lth.se}}
\begin{document}
\maketitle
\begin{abstract}
Quantum graphs are defined by having a Laplacian defined on the edges of a metric graph with boundary conditions on each vertex such that the resulting operator, $\mathbf{L}$, is self-adjoint. We primarily use standard boundary conditions. The spectrum of $\mathbf{L}$ does not determine the graph uniquely, that is, there exist non-isomorphic graphs with the same spectra. There are few known examples of pairs of non-isomorphic but isospectral quantum graphs. In this paper we start to correctify this situation by finding hundreds of isospectral sets, using computer algebra.

We have found all sets of isospectral but non-isomorphic equilateral connected quantum graphs with at most nine vertices. This includes thirteen isospectral triplets and one isospectral set of four. One of the isospectral triplets involves a loop where we could prove isospectrality. We also present several different combinatorial methods to generate arbitrarily large sets of isospectral graphs, including infinite graphs in different dimensions. As part of this we have found a method to determine if two vertices have the same Titchmarsh-Weyl $M$-function. We give combinatorial methods to generate sets of graphs with arbitrarily large number of vertices with the same $M$-function. We also find several sets of graphs that are isospectral under more general, permutation invariant, boundary conditions. This necessitates a study of eigenvalue zero where we prove several results.
We discuss the possibilities that our program is incorrect, present our tests and open source it for inspection at \href{http://github.com/meapistol/Spectra-of-graphs}{this url}.

\end{abstract}

Keywords: quantum graphs, non-isomorphic, isospectral
\newpage
\tableofcontents

\section{Introduction}
The theory of isospectral manifolds is rich and has a long history \cite{brooks1988constructing, kac1966can,milnor1964eigenvalues,sunada1985riemannian,gordon1992one} where most often the Laplace operator is the relevant operator combined with Dirichlet or Neumann boundary conditions. There are many manifolds which have the same spectrum but are not isometric, which also include subsets of $\mathbb{R}^2$ \cite{gordon1992one}. For quantum graphs it has been shown that if the lengths of the edges are rationally independent, then two graphs having the same spectra must be identical, but if the lengths of the edges are rationally dependent then there exist examples of isospectral, but not isomorphic, quantum graphs \cite{gutkin2001can,kurasov2005inverse,band2006nodal} and the interest in isospectrality is high \cite{Below-isospectral}. We will call such pairs \textit{isospectral pairs}. A problem has been that only very few examples of isospectral pairs have been known, making it difficult to study their properties and to find patterns among them. Band et al. found a method to construct isospectral pairs of quantum graphs \cite{band2009isospectral}. Their examples involved either not only Neumann boundary conditions at terminal vertices or involved disjoint graphs. 

In order to improve the situation we have here searched for isospectral pairs, or more generally isospectral sets, using computer algebra. Our investigations are most often limited to connected equilateral graphs, where all edges have the same length and we leave more general graphs for future study. We initially used standard boundary conditions (defined below) which is the most investigated case. We have found 364 sets of isospectral graphs, which include one isospectral set of four, among all equilateral graphs with at most nine vertices. In addition we have found 51 isospectral pairs among all equilateral tree graphs having at most thirteen vertices. Some of the isospectral sets we found are unusually simple. 

We also present a method to generate isospectral graphs by attaching certain graphs to any compact graph which can generate arbitrarily large sets of isospectral graphs. In order to do so we need to find the Titchmarsh-Weyl $M$-function and this can be done even by hand in some cases. Since the $M$-function is highly important to generate isospectral graphs we have found a method to determine if two vertices have the same $M$-function and implemented it in software. We classify the vertices according to their Titchmarsh-Weyl $M$-function for all isospectral graphs with up to seven vertices. 

Our combinatorial methods allowed us to find an isospectral triplet where one member is the loop graph which is one of the most simple graphs. This method also allowed us to find infinite graphs where periodic and aperiodic graphs have the same spectrum and this can be done in high dimensions. 

There is interest in finding criteria that distinguishes isospectral graphs, such as the number of nodal points \cite{band2006nodal}, scattering properties \cite{Pivovarchik-Mugnolo}, the Titchmarsh-Weyl $M$-function \cite{Gernandt}, or knowing the spectrum under different boundary conditions \cite{Kaliuzhnyi-Verbovetskyi}. We will show that there are plenty of graphs that are isospectral also under different boundary conditions, including $\delta$-type boundary conditions which depend on a parameter. During this investigation we have found all sets of isospectral graphs under Dirichlet boundary conditions at terminal vertices where the graphs have at most eight vertices and at most 13 vertices for trees.

Our results show that there are many interesting isospectral sets and makes it possible to get some general insights about them. The use of our software has allowed us to fairly quickly do experiments with quantum graphs which has led to several of our discoveries and there is likely more to be found.

The increasing interest in isospectral graphs is witnessed by the appearance of three manuscripts after the first version of this manuscript \cite{kurasov.muller,post}, where isospectral graphs are constructed, in one case using the magnetic Laplacian, appearing after version 12 of this manuscript \cite{Fabila-Carrasco.magnetic.2023}.

\section{Laplacians on graphs and their spectra}

We consider only finite compact metric graphs, $ \Gamma $, formed by joining together a set of edges, $ E_n $, at a set of
vertices, $ V_m $. Each edge, $ E_n $, has a certain length and can be seen as the interval $ [x_{2n-1}, x_{2n}] $ on the real line.
On each edge we define the Laplace operator $ \mathbf{L} = - \frac{d^2}{dx^2} $ which has solutions given by a linear combination of $e^{i k x}$ and $e^{-i k x}$. We impose standard boundary conditions (unless otherwise stated) 

\begin{equation} \label{sc1}
\left\{
\begin{array}{ll}
\displaystyle  f(x_i)=f(x_j), & x_i, x_j \in V_m, \\[3mm]
\displaystyle \sum_{x_i{ \in V_m} }\partial_{n}f(x_i)=0. &
\end{array} \right. 
\end{equation}
at each vertex $ V_m$ where the $x_i$'s are the endpoints of the edges that meet at the vertex. In words, the eigenfunctions are required to be continuous at the vertex and the sum of their (outward) normal derivatives, $\partial_{n}f(x_i)$, at the vertex is zero. At terminal (valence one) vertices we call these boundary conditions Neumann boundary conditions. With these boundary conditions the Laplace operator is self-adjoint \cite{berkolaiko2013introduction,kostrykin1999kirchhoff} and has a spectrum which is discrete and formed by a sequence of eigenvalues tending to $+ \infty$. We will denote this self-adjoint operator $\mathbf{L}(\Gamma)$ or just $\mathbf{L}$ if there is no confusion about the domain.
We note that $\lambda_0 = 0$ is an eigenvalue with the eigenfunction $\psi_0 (x) = 1$. This eigenfunction is unique, apart from normalisation, provided $ \Gamma $ is connected. 
Imposing the boundary conditions on the eigenfunctions gives a certain secular equation (or secular determinant), $\Sigma(k)$, that has to be zero in order for $k$ to be a root (which we will often call an \textit{eigenfrequency}), such that $ \lambda = k^2 $ where $\lambda$ is an eigenvalue. How to obtain $\Sigma(k)$, which is not unique, has been described many times before \cite{gutkin2001can, kurasov2005inverse,berkolaiko2013introduction,berkolaiko2017elementary} and we will not repeat it here. We will use the term \textit{boundary condition(s)} to mean any boundary condition(s), not necessarily standard, that make the operator $\mathbf{L}$ self-adjoint. If we say Neumann (or Dirichlet) boundary conditions we mean at terminal vertices and do not always spell out "at pendant edges" or "at terminal vertices".

\section{Computing the eigenvalues}

In order to find the eigenvalues of a graph we wrote a computer program that constructs a $\Sigma(k)$ as a function of the graph. Two graphs are typically isospectral if they have the same $\Sigma(k)$ apart from possible factors that do not contribute any roots. The program optionally solves the equation $\Sigma(k) = 0$ if the graph has rationally dependent edge lengths, and the solutions are given in a symbolic form. If the graph has a pair of rationally independent edges then we seldom get any solutions. Our program is written in Mathematica \cite{Mathematica}.

As an example of the usefulness of our programs we have found that the complete graph of $V=F+1$ vertices has the secular equation:

\begin{equation}
\Sigma(k)=\left(F e^{2 i k}+F+2 e^{i k}\right)^{F} \left(-1+e^{i k}\right)^{P+2} \left(1+e^{i k}\right)^P
\end{equation}

where $P=\frac{1}{2} \left(V^2-3 V \right)$ and where the length of each edge is one. This was tested for up to 15 vertices. Such a result can be found by hand but it will take some effort. More such results will be given below and most of them were obtained by computer.

\section{Testing}

Since our results are highly dependent on our program being correct we have performed a set of tests, often against known results. Specifically:

\begin{itemize}
\item The program gives the correct eigenvalues with the correct multiplicities for the path graph, the loop graph, the lasso graph for different lengths of the pendant edge, and the star graph with $ n $ leaves of equal length \cite{berkolaiko2017elementary, berkolaiko2017simplicity, kennedy2015spectral,Malenova2013}.
\item The program gives the correct eigenvalues with the correct multiplicities for the star graph having three leaves with different lengths \cite{berkolaiko2017simplicity}. 
\item The program gives the correct eigenvalues with the correct multiplicities for the flower graph with two petals with different lengths \cite{berkolaiko2017simplicity,Blixt2015}. 
\item The program gives the correct second eigenvalue (i. e. the spectral gap) for the complete graph with $n$ vertices as well as for the pumpkin graph with $n$ edges of equal length \cite{kennedy2015spectral}.
\item The program correctly gives the same eigenvalues for the two isospectral, but not isomorphic, graphs given by Gutkin and Smilanzky \cite{gutkin2001can}. 
\item The program changes the eigenvalues correctly when scaling the length of the graphs \cite{kennedy2015spectral}. 
\item Consider a graph having a set of vertices with valence two. Let us create a second graph by removing some of these vertices. These two graphs have the same spectrum and this is confirmed by our program in all tested cases.
\item For a few isospectral graphs found by the program we have been able to verify the isospectrality by hand. Details are provided below.
\item We wrote a separate program which constructs the secular equation, $\Sigma(k)$, using bond and edge scattering matrices as described by Berkolaiko \cite{berkolaiko2017elementary}. We tested that our two programs give the same secular equation for one thousand equilateral graphs (and a few that depend on a parameter) and they do.
\end{itemize}

Despite this it is possible that the program is still not correct in all cases and we are not competent to formally prove that it is correct. We open source our programs \cite{pistol2021graphroots} including our test examples. We also include notebooks such that our figures can be reproduced, and independent minds can thus inspect our results and do more tests. We here follow a trend in modern mathematics to use computers to either prove or make results highly likely \cite{hales2012dense,isaksen2020stable,buzzard2020formalising}.

\section{Testing graphs for isospectrality}

We downloaded the one graph with two vertices, the two graphs with three vertices, the six graphs with four vertices, the 21 graphs with five vertices, the 112 graphs with six vertices, the 853 graphs with seven vertices, the 11117 graphs with eight vertices and the 261080 graphs with nine vertices \cite{McKay}. All of the downloaded graphs are connected equilateral graphs. We then computed the spectra for all these graphs and checked for possible isospectral pairs amongst all graphs with at most seven vertices. We only compared graphs having equal number of vertices, meaning that a loop with six vertices is not considered to have a loop with seven vertices as an isospectral partner. Since our program does not give the multiplicities of the eigenvalues explicitly we checked the relevant determinants, $\Sigma(k)$, by hand in order to finally isolate all isospectral pairs. Our program is too slow to directly search for isospectral pairs among the graphs with eight and nine vertices and for them we resorted to a trick. We have empirically found that isospectral graphs have the same characteristic polynomial. We thus preselected graphs with eight and nine vertices by their characteristic polynomial which can be done very fast and subsequently tested for isospectrality on the reduced sets. There are several characteristic polynomials in the literature and the one we used is $C(x) = |T x - A|$ where $A$ is the adjacency matrix and $T$ is a matrix where the diagonal entries are the valencies of the vertices. With $|A|$ we mean the determinant of $A$. If there are isospectral graphs with different characteristic polynomials, then we have likely missed some isospectral sets, but only for graphs having eight or nine vertices. 

Fig. \ref{fig:all7} shows the result for six and seven vertices. We find one isospectral pair with six vertices and five isospectral pairs having seven vertices. Note that in all cases one graph in the pair has a terminal vertex and the other graph does not have a pendant edge. We have found that the isospectral pair in Fig. \ref{fig:all7}f) to be special, in the sense that the members of the other isospectral pairs with seven vertices are subgraphs of one of the members of this pair. These patterns break down for eight vertices and nine vertices as shown in Fig. \ref{fig:afirsttriplets}, where we show the three isospectral triplets with eight vertices and the single isospectral set of four with nine vertices. The first isospectral pair of tree graphs occur with nine vertices, see Fig. \ref{fig:atrees}. In the Appendix we show all isospectral sets of three having nine vertices. The full set of isospectral pairs with eight and nine vertices is too large to display in this manuscript but can be found in Ref. \cite{pistol2021graphroots}. Mathematica sometimes plots graphs with overlapping edges so caution is adviced when checking the figures in Ref. \cite{pistol2021graphroots} and the Mathematica files should be inspected. In Table  \ref{tab:Numberofisospectralpairs} we summarize the number of isospectral sets as a function of the number of vertices of the graphs. We find that isospectral trees are very rare and that the fraction of graphs that belong to an isospectral set decreases with the number of vertices. 

In all honesty, in a moment of (temporary) insanity we also downloaded the 12005168 graphs with ten vertices, but could not test them all for isospectrality due to computer limitations. We doubt we can ever test them all. However, we could test subsets of them for isospectrality and we give the results in Ref. \cite{pistol2021graphroots}. There are many fascinating sets of isospectral graphs with ten vertices and we give a few examples below.

\begin{figure}
\centering
\includegraphics[width=1.0\textwidth]{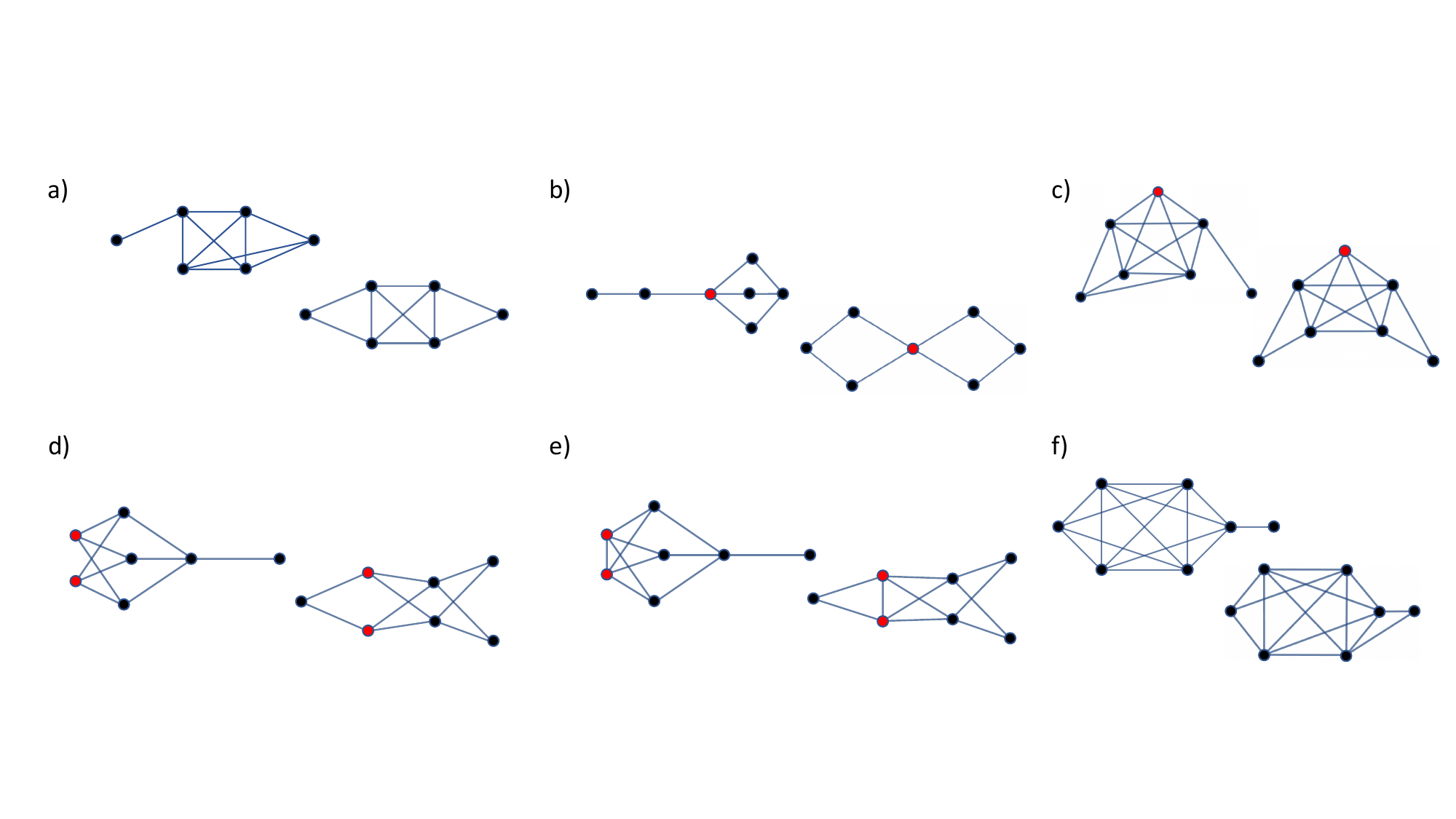}
\caption{ All isospectral pairs with at most seven vertices. The edge length is one for all graphs. a) The one isospectral pair with six vertices. b) - f) The five isospectral pairs with seven vertices. The pairs in e) have one extra edge compared with those in d). In all cases one member of the pair has a terminal vertex and the other not. All graphs in b) - e) are subgraphs of a member of f). Isospectral graphs sometimes have vertices with the same $M$-functions. Such vertices are red. $M$-functions are described in the main text.}
\label{fig:all7}
\end{figure}

We also studied tree graphs, hereafter called \textit{trees}, in more detail and generated all trees with at most 13 vertices. Checking for isospectral pairs we found 51 examples, where the first example had nine vertices. In Fig. \ref{fig:atrees} we show the one isospectral pair having nine vertices and the two having ten vertices. The isospectral pair from Ref. 6, where the trees have eight vertices, was not detected as an isospectral pair by our program since the edge lengths are not equal. We found five isospectral pairs with 11 vertices and six isospectral pairs with 12 vertices, shown in the Appendix. The remaining 37 isospectral pairs with 13 vertices are given in Ref. \cite{pistol2021graphroots}. 

Our results agree with those of Chernyshenko and Pivovarchik who did not find isospectral pairs for equilateral graphs having at most five vertices and equilateral trees having at most eight vertices \cite{chernyshenko2020recovering}. 

\begin{figure}[ht]
\centering
\includegraphics[width=1.0\textwidth]{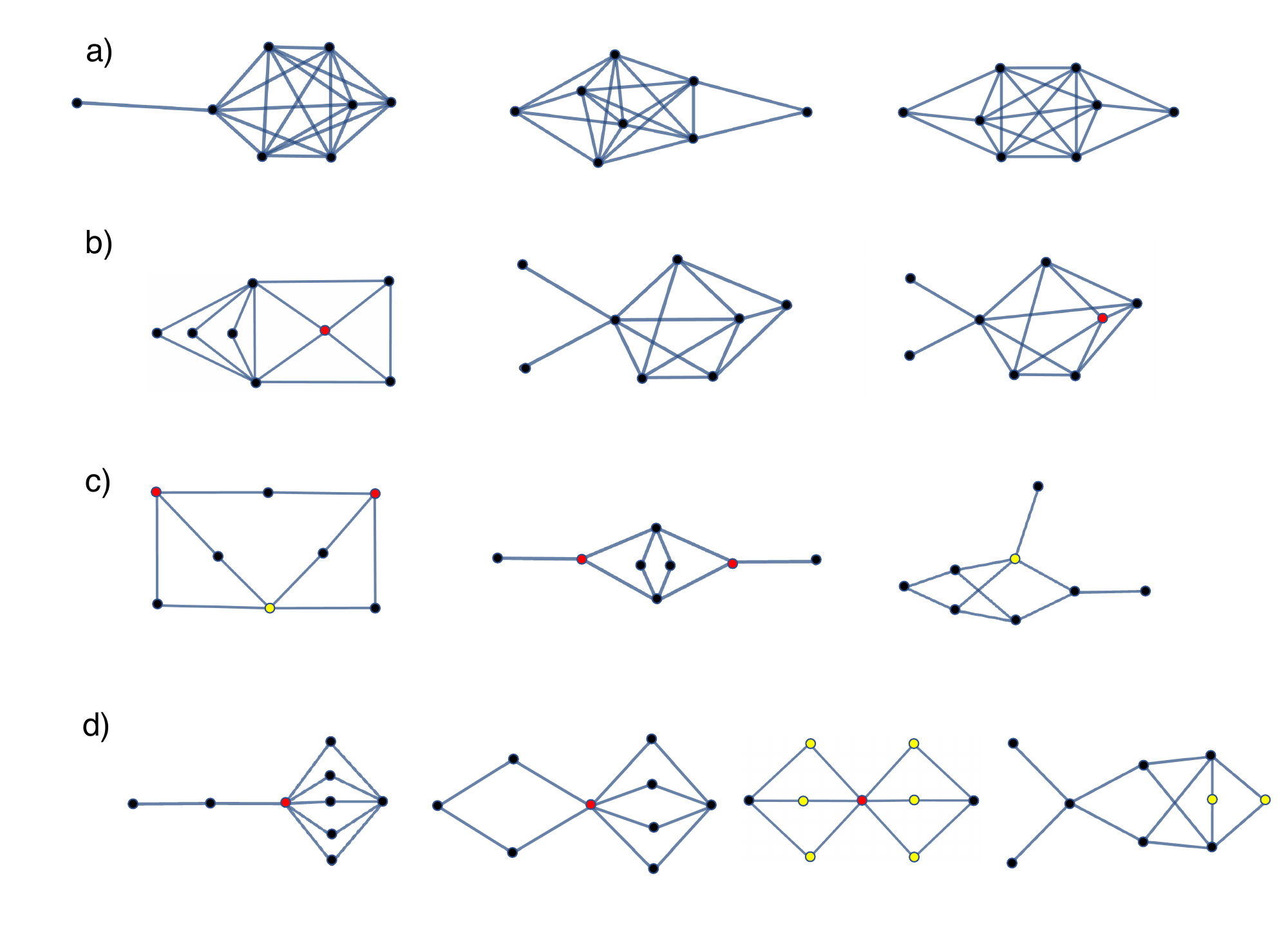}
\caption{a)-c) The three isospectral triplets with eight vertices. Two vertices among the graphs in b) have the same $M$-function, red. The red vertices in c) have the same $M$-function say $M_1$, and so do the yellow ones, say $M_2$. $M_1 \neq M_2$. d) The single isospectral set of four with nine vertices. Also here we have two sets of vertices having the same $M$-function if they have the same colour (except black). There are graphs with eight vertices belonging to isospectral sets that are not a subgraph of any member of a). The graphs have been checked such that there are no overlapping edges.}
\label{fig:afirsttriplets}
\end{figure}

\begin{figure}
\centering
\includegraphics[width=1.0\textwidth]{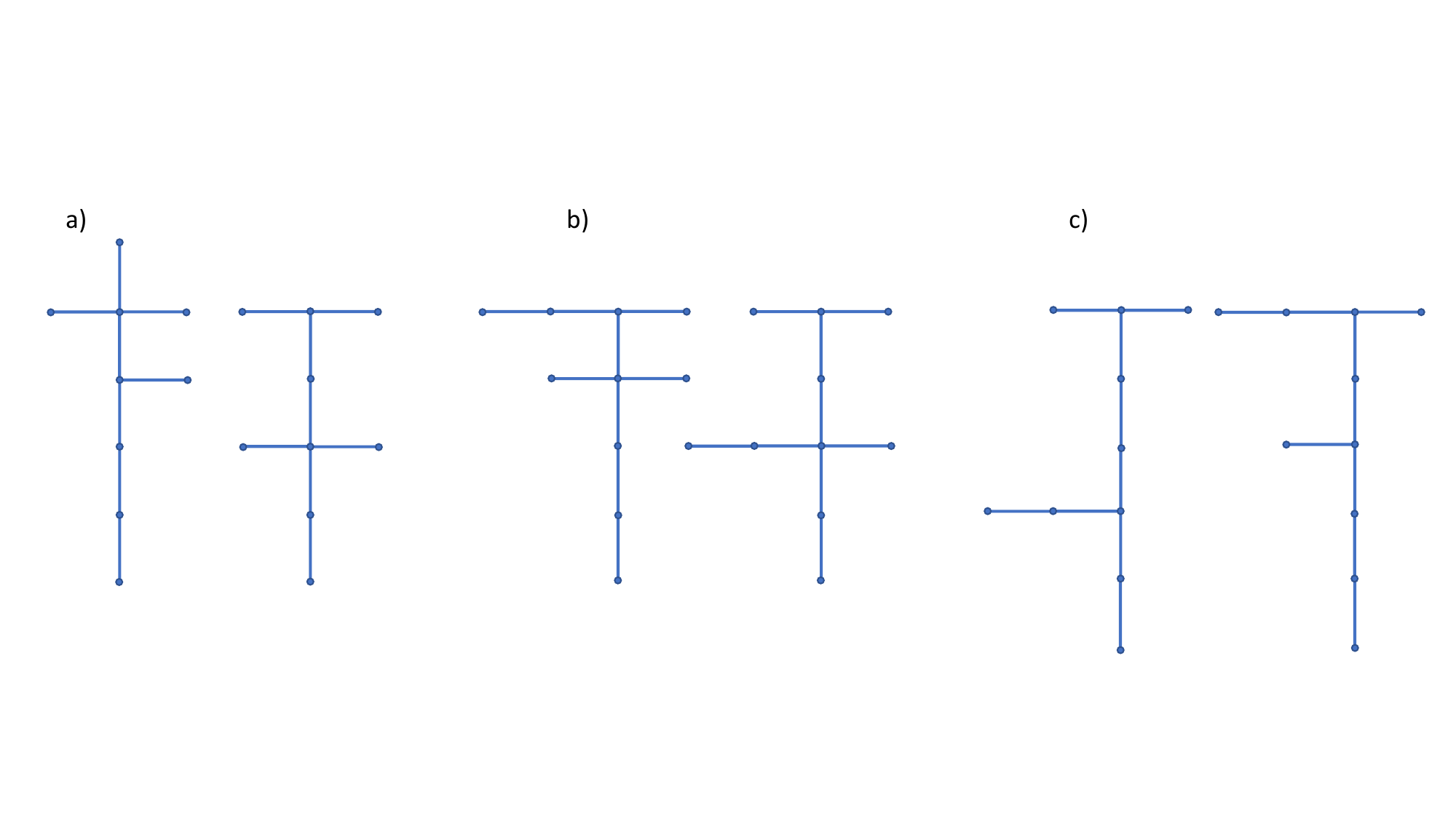}
\caption{All isospectral pairs of trees with at most ten vertices. a) The isospectral pair with nine vertices. b), c) The two isospectral pairs having ten vertices. These graphs have no two vertices with the same $M$-function within each pair.}
\label{fig:atrees}
\end{figure}

\begin{table}[]
\centering
\caption{\label{Table 1:all} The number of equilateral graphs, isospectral pairs (which includes trees), isospectral triplets, isospectral sets of four and isospectral pairs of trees as a function of the number of vertices. An empty field means we don't have data.
The fraction of graphs that have at least one isospectral partner decreases from 0.018 to 0.0025 as the number of vertices goes from 6 to 9.}
\begin{tabular}{llllll}
         &            &       &          &              &       \\ \hline
Vertices & Graphs     & Pairs & Triplets & Sets of four & Pairs of trees \\ \hline
6        & 112        & 1     & 0        & 0            & 0     \\
7        & 853        & 5     & 0        & 0            & 0     \\
8        & 11117      & 39    & 3        & 0            & 0     \\
9        & 261080     & 304   & 10       & 1            & 1     \\
10       & 12005168   &       &          &              & 2     \\
11       & 1018997864 &       &          &              & 5     \\
12       &            &       &          &              & 6     \\
13       &            &       &          &              & 37 \\  \hline
\end{tabular}
 \label{tab:Numberofisospectralpairs}
\end{table}

\begin{figure}[ht]
\centering
\includegraphics[width=1.0\textwidth]{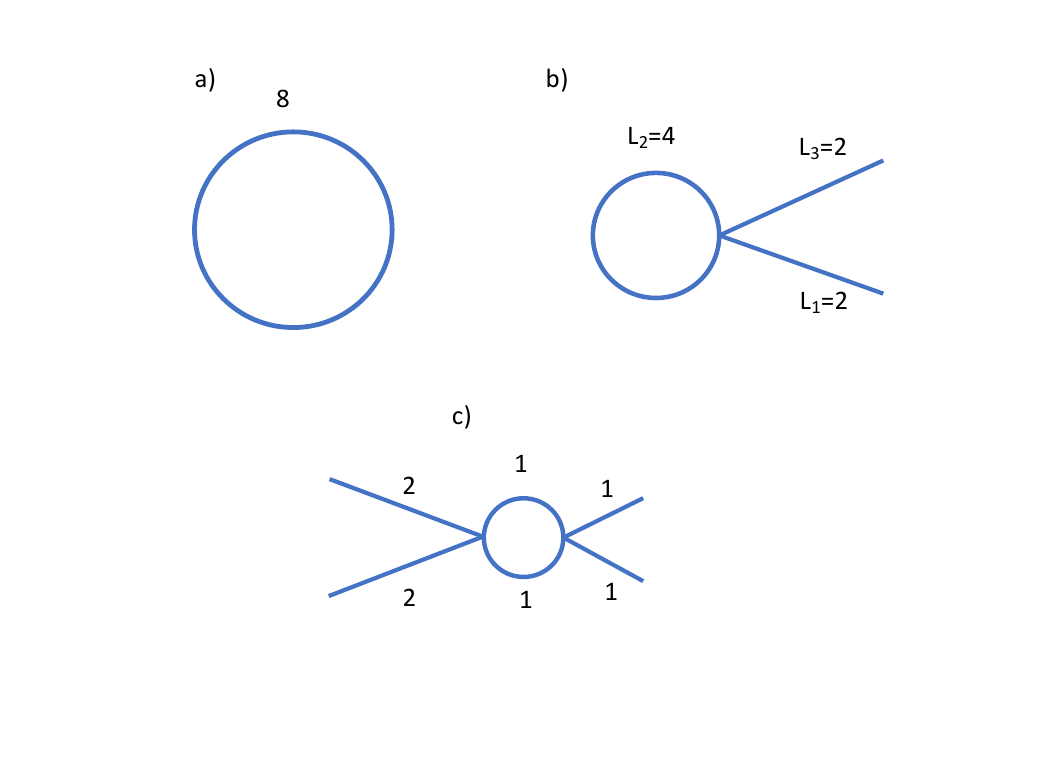}
\caption{Three very simple isospectral graphs. a) A loop graph having length eight. b) The first isospectral partner of the loop graph when $L_1=L_3=2$ and $L_2=4$ such that the total length of the graph is 8. c) The second isospectral partner of the loop.}
\label{fig:aloop} 
\end{figure}

Encouraged by these examples of isospectral pairs we then generated graphs consisting of a loop with four vertices which we decorated with terminal vertices or pendant trees such that the total number of vertices was at most ten. We found 22 isospectral pairs and one isospectral triplet. Some of these isospectral sets are very simple, in particular the isospectral triplet in Fig. \ref{fig:aloop}. This triplet involves a loop which is very well known. The other two members is a loop decorated with two pendant edges and a loop decorated with four pendant edges. The graph in Fig. \ref{fig:aloop}c) was not found during the search but using the combinatorial method to be described below. We also generated graphs consisting of a loop with six vertices which we decorated with pendant edges or pendant trees such that the full graph has at most 12 vertices and the isospectral sets are given in the Appendix.

We decided to compute the eigenfrequencies of the graphs in Fig. \ref{fig:aloop} by hand in order to give our program an extra check as well as to prove that the graphs really are isospectral. We have the following theorem. \\

\begin{theorem}
The loop has (at least) two isospectral partners. They are shown in Fig. \ref{fig:aloop}.
\end{theorem}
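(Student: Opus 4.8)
The plan is to verify isospectrality the way the program does, by comparing secular determinants, but organised so the computation stays light and explains why it works. First I would pass to the reduced graphs: by the fact already recorded in the Testing section (suppressing a valence-two vertex leaves the spectrum unchanged), each graph of Fig.~\ref{fig:aloop} collapses to a small graph in which a central vertex $v$ carries a loop together with free-end pendant branches (for the two decorated members) or is a bare loop (for Fig.~\ref{fig:aloop}a). On each maximal branch I write the eigenfunction as $A\cos kx+B\sin kx$, impose continuity and the Kirchhoff condition \eqref{sc} at $v$, and read off the secular equation as the vanishing of the resulting determinant. The claim to prove is that these determinants share their zeros with the same multiplicities, equivalently that all three spectra are the arithmetic progression $k_n=n\pi/4$ ($n\ge 0$), with multiplicity two for $n\ge1$ together with the simple eigenvalue $\lambda_0=0$.

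The conceptual engine is the Titchmarsh--Weil $M$-function of each branch at $v$. A short computation gives, for a free-end pendant of length $\ell$, $M_{\mathrm{pend},\ell}(k)=k\tan(k\ell)$, and for a loop of circumference $L$, $M_{\mathrm{loop},L}(k)=2k\tan(kL/2)$; in particular $M_{\mathrm{loop},L}=2\,M_{\mathrm{pend},L/2}$, so a loop of circumference $L$ and a pair of length-$L/2$ pendants present the same $M$-function at $v$. This is exactly the kind of $M$-function coincidence that the combinatorial method (described below) turns into isospectrality: replacing an attached loop by two pendants of half its circumference, or conversely, at a shared vertex leaves the spectrum invariant, and this is what relates the two decorated members of the triplet to one another.

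The loop itself (Fig.~\ref{fig:aloop}a) is the member that does not fit a single-vertex $M$-swap, since its $M$-function $2k\tan(4k)$ differs from the decorated value $4k\tan(2k)$, so for it I would argue directly. The key structural fact is that a free-end pendant of length $2$ is spectrally transparent at every $k_n=n\pi/4$: there $2k_n=n\pi/2$, so either $\cos 2k_n=0$, forcing the pendant to vanish at $v$, or $\sin 2k_n=0$, killing the pendant's contribution to the derivative sum. This lets each eigenfunction of the circle be transplanted onto the decorated graph. Concretely, substituting $k=n\pi/4$ into the secular system and splitting into the cases $\sin 2k=0$ (even $n$) and $\cos 2k=0$ (odd $n$), one finds in each case a two-dimensional solution space, one delocalised mode with $f(v)\ne0$ and one mode with a node at $v$, matching the circle's double eigenvalue, while for $k$ off the progression the determinant is nonzero.

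The main obstacle is the multiplicity bookkeeping, and it is the only genuinely subtle point. The naive reduced condition ``$\sum_{\text{branches}} M(k)=0$ or $f(v)=0$'' is not enough at the odd frequencies, because there $\cos 2k=0$ is precisely a pole of the pendant $M$-functions; the second unit of multiplicity comes from node-at-$v$ modes that this condition does not see, so one must keep the full secular determinant rather than just $\sum M$, and separately confirm that no eigenvalues appear off the grid $k_n=n\pi/4$. A Weyl-law check, $N(k)\sim \mathcal{L}k/\pi$ with common total length $\mathcal{L}=8$, gives a useful consistency test that the counts agree. Once the three determinants are seen to share zeros and multiplicities, isospectrality follows; non-isomorphism is immediate from the differing vertex counts and degree sequences, and $\lambda_0=0$ is simple on each graph since all three are connected.
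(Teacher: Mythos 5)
There is a genuine gap, and it sits exactly where the paper itself places a warning sign. Your link between the two decorated members rests on the claim that ``replacing an attached loop by two pendants of half its circumference, or conversely, at a shared vertex leaves the spectrum invariant,'' justified only by the coincidence of $M$-functions $M_{\mathrm{loop},L}=2M_{\mathrm{pend},L/2}$. Equality of $M$-functions does \emph{not} imply that such a swap preserves the spectrum: eigenfunctions vanishing at the swap vertex are invisible to the $M$-function and are not matched by the exchange. The paper makes this point with precisely your pair of graphs --- the loop of length $L$ and the interval of length $L$ attached at its midpoint (i.e.\ your two pendants of length $L/2$) have the same $M$-function ``despite them not being isospectral.'' You can see the failure concretely on graph b) of Fig.~\ref{fig:aloop}: replacing its loop of circumference $4$ by two pendants of length $2$ yields the star with four edges of length $2$, whose spectrum is $(2N+1)\pi/4$ with multiplicity $3$ and $N\pi/2$ with multiplicity $1$, not the circle's $N\pi/4$ with multiplicity $2$; replacing instead the two pendants by a loop yields the figure-eight, with $N\pi/2$ of multiplicity $3$, also not isospectral. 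Moreover, neither swap even produces the graph of Fig.~\ref{fig:aloop}c), so the third member of the triplet is left entirely unproved. The correct attachment principle (Kurasov--Muller, as the paper states it) requires the exchanged subgraphs to be \emph{both} isospectral \emph{and} to have equal $M$-functions at the attachment vertex; the way c) actually arises is by replacing the loop of circumference $4$ inside b) by a half-scale copy of the a)--b) pair itself (a loop of circumference $2$ plus two pendants of length $1$), which presupposes a)$\simeq_{is}$b) together with its hot-vertex property --- or else an independent direct computation for c), which is what the paper does.

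Your direct argument for a)$\simeq_{is}$b) is essentially sound and is a legitimate alternative to the paper's route: the paper writes down Berkolaiko's scattering matrices $S_v$, $S_e(k)$ and computes $\det\bigl(I-S_vS_e(k)\bigr)=\left(e^{ik}-1\right)^2$ (unit total length), then checks c) the same way, whereas you transplant eigenfunctions using the transparency of length-$2$ pendants at $k_n=n\pi/4$. Two caveats on your sketch: at odd multiples of $\pi/4$ the condition $\cos 2k=0$ forces $f(v)=0$ for \emph{every} mode, so your description ``one delocalised mode with $f(v)\neq 0$ and one nodal mode'' is only right for even $n$ (at odd $n$ both independent modes vanish at $v$, one living on the loop and one antisymmetric across the two pendants); and this transparency argument is tied to pendants of length $2$, so it will not transfer to c), whose pendants of length $1$ are not transparent at odd multiples of $\pi/4$. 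To complete the theorem you must add, for the third graph, either the corrected attachment argument above or a direct secular-equation verification.
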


\begin{proof} 
In order to prove this we followed the method explained in detail by Berkolaiko in Ref. \cite{berkolaiko2017elementary}, which involves a bond scattering matrix, $S_v$, and an edge scattering matrix, $S_e(k)$. These matrices are as follows for the graph in Fig. \ref{fig:aloop}b):

\begin{equation}
S_v =
\left(
\begin{array}{cccccc}
 0 & -\frac{1}{2} & \frac{1}{2} & \frac{1}{2} & 0 & \frac{1}{2} \\
 1 & 0 & 0 & 0 & 0 & 0 \\
 0 & \frac{1}{2} & \frac{1}{2} & -\frac{1}{2} & 0 & \frac{1}{2} \\
 0 & \frac{1}{2} & -\frac{1}{2} & \frac{1}{2} & 0 & \frac{1}{2} \\
 0 & \frac{1}{2} & \frac{1}{2} & \frac{1}{2} & 0 & -\frac{1}{2} \\
 0 & 0 & 0 & 0 & 1 & 0 \\
\end{array}
\right) 
\end{equation}

\begin{equation}
S_e(k) = \left(
\begin{array}{cccccc}
 e^{i k L_1} & 0 & 0 & 0 & 0 & 0 \\
 0 & e^{i k L_1} & 0 & 0 & 0 & 0 \\
 0 & 0 & e^{i k L_2} & 0 & 0 & 0 \\
 0 & 0 & 0 & e^{i k L_2} & 0 & 0 \\
 0 & 0 & 0 & 0 & e^{i k L_3} & 0 \\
 0 & 0 & 0 & 0 & 0 & e^{i k L_3} \\
\end{array}
\right)
\end{equation}
 
The edges were ordered - one pendant edge, the loop, and the other pendant edge. Each edge labels two rows and two columns, since directed edges are used in the construction of $S_v$.
The eigenfrequencies are found by solving $\Sigma (k)=\text{Det}(I-S_v S_e(k)) = 0$. $\Sigma (k)$ is the secular equation. If we set $L_1=L_3=1/4$ and $L_2 = 1/2$ we get $\Sigma (k)=\left(e^{i k}-1\right)^2=0$ which has solutions $k = 2 \pi N$ with multiplicity two and where $N$ is a positive integer. These are precisely the non-zero eigenfrequencies with the correct multiplicities for a loop with length one. The last graph in Fig. \ref{fig:aloop} was also checked by hand and was confirmed to have the same secular equation as a loop with length eight. The details are given in Ref. \cite{pistol2021graphroots}. 
\end{proof}

We found some isospectral pairs involving fairly simple graphs, and three examples are given in Fig. \ref{fig:alooptriplets1}. For the pair in Figs. \ref{fig:alooptriplets1}c) and \ref{fig:alooptriplets1}d) we checked the eigenfrequencies once again by hand and found that the graphs have the same secular equation. The calculation is given in the Appendix. We also found two other sets of isospectral triplets and these sets are given in Fig. \ref{fig:alooptriplets2} and Fig. \ref{fig:alooptriplets3}.

\begin{figure}[ht]
\centering
\includegraphics[width=0.6\textwidth]{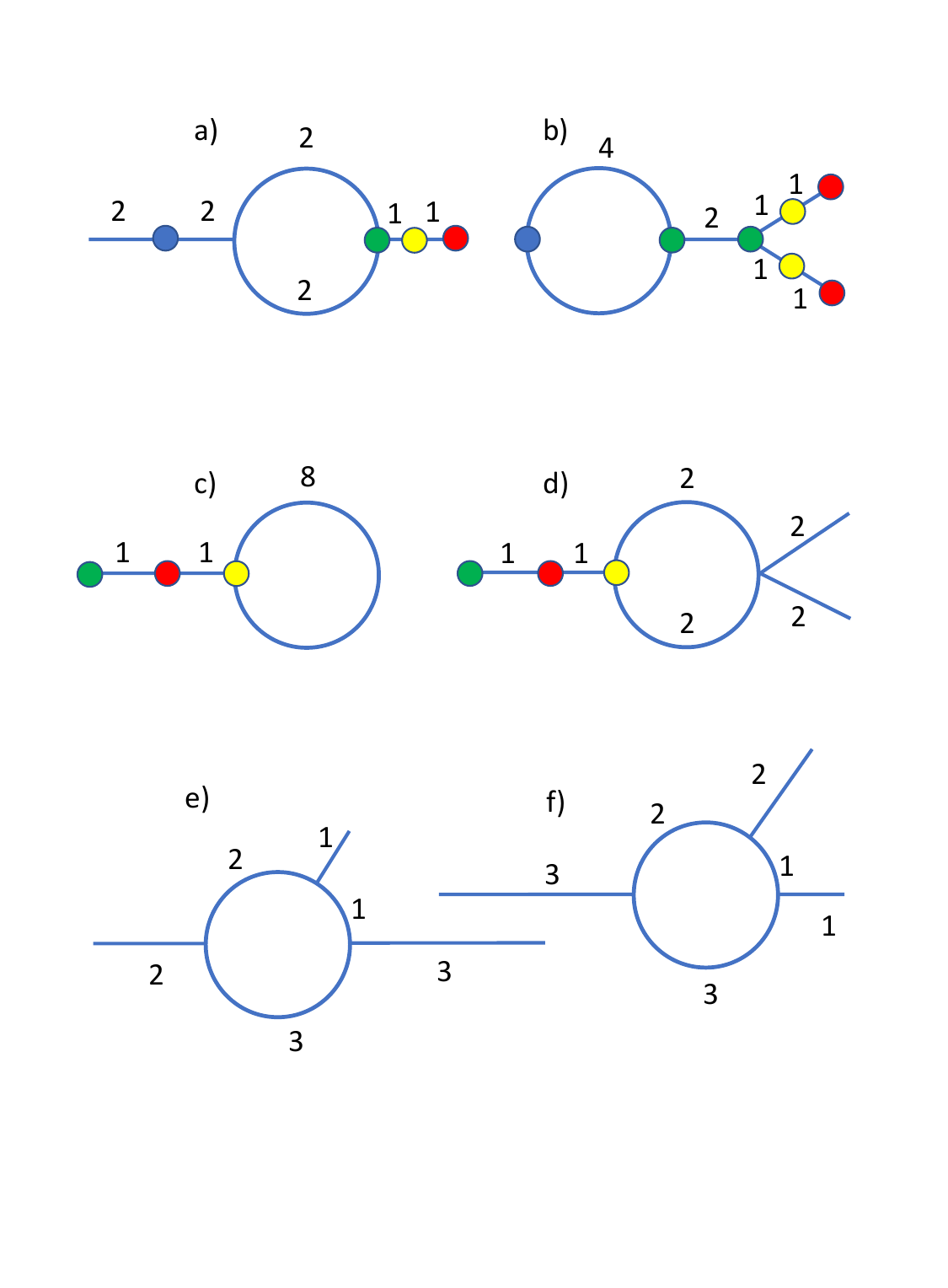}
\caption{Three simple isospectral pairs where a) is isospectral with b), c) is isospectral with d) and e) is isospectral with f). The length of the edges connecting any two vertices are indicated. Vertices with the same colour have the same $M$-function within each isospectral pair. The isospectral pair in e-f) does not have any hot vertices seen as equilateral graphs with 12 vertices.}
\label{fig:alooptriplets1} 
\end{figure}

In the Appendix we give a larger set of isospectral pairs including graphs with up to twelve vertices and even more examples are given in Ref. \cite{pistol2021graphroots}. Most of the graphs in Figs. \ref{fig:alooptriplets1}-\ref{fig:alooptriplets3} were created from corresponding graphs in Figs. \ref{fig:A1}-\ref{fig:A2} where vertices with valence (or degree) two were removed.

\begin{figure}[ht]
\centering
\includegraphics[width=0.8\textwidth]{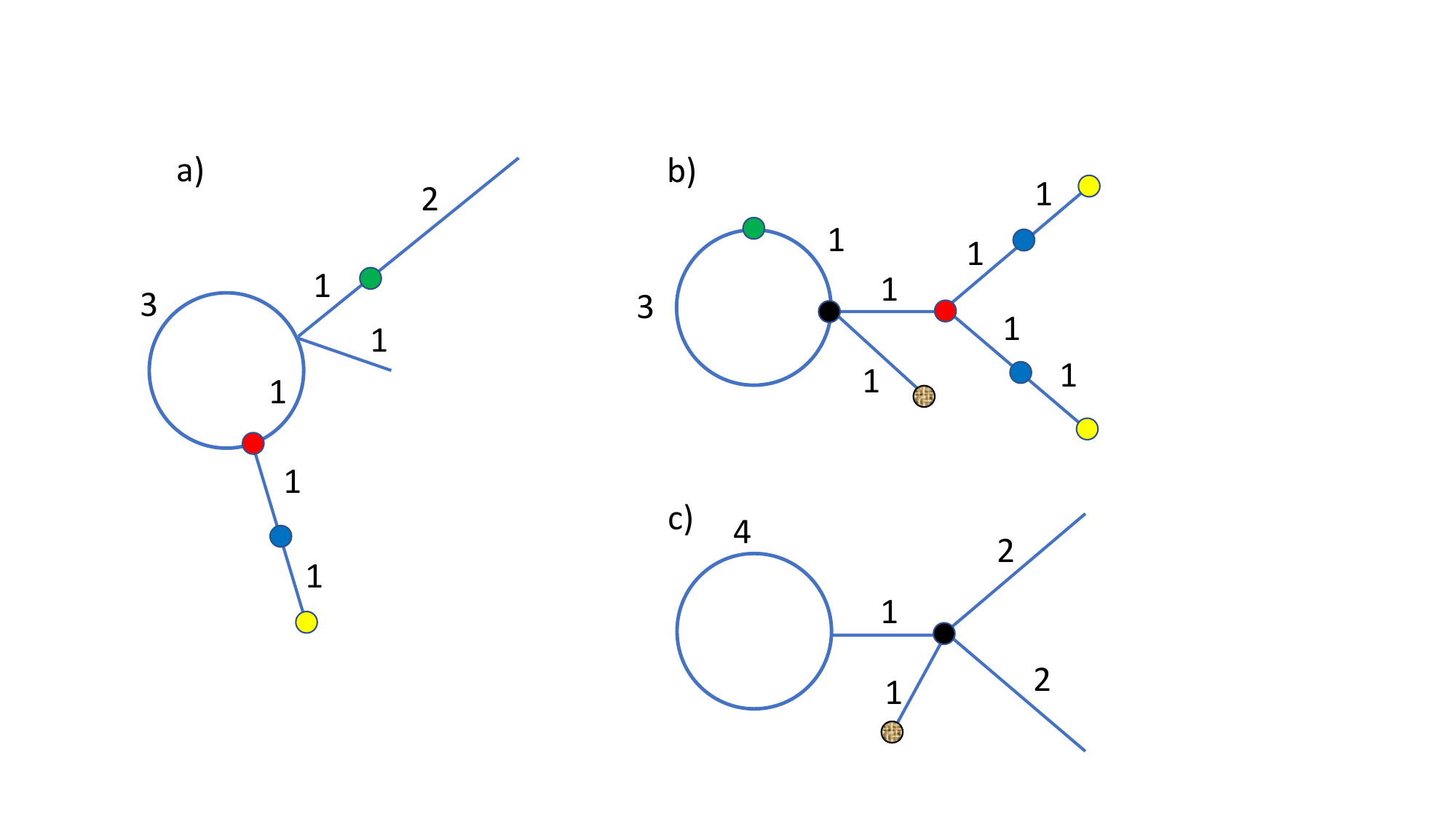}
\caption{ A set of three isospectral graphs. All the loops have a total length of four and the two vertices on the loop for the graph in a) are separated by length one. Vertices with the same colour have the same $M$-function.}
\label{fig:alooptriplets2}
\end{figure}

\begin{figure}[ht]
\centering
\includegraphics[width=0.8\textwidth]{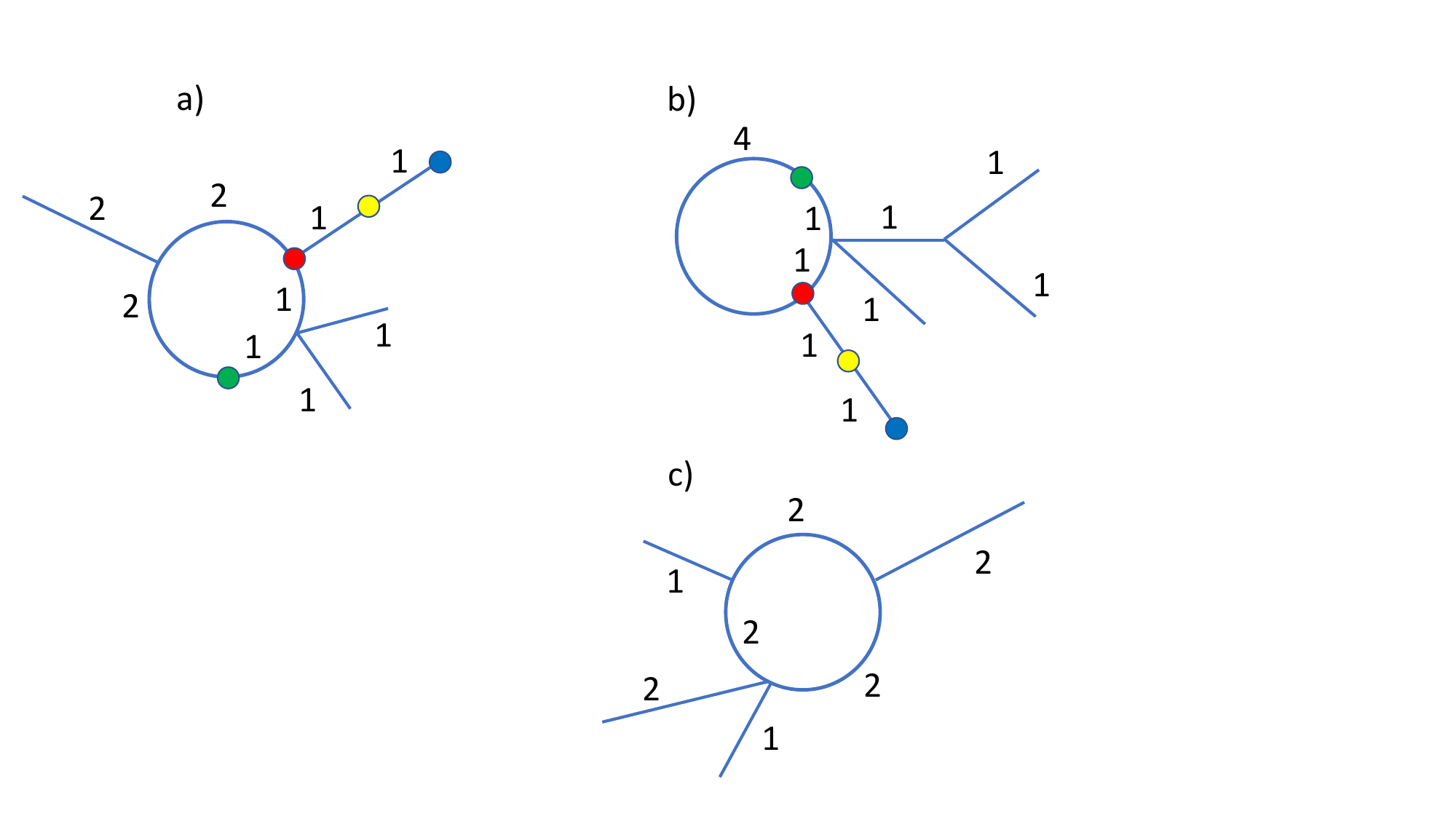}
\caption{A set of three isospectral graphs. The loops all have total length six. Vertices with the same colour have the same $M$-function. The graph in c) does not have any hot vertices seen as an equilateral graph with 12 vertices }
\label{fig:alooptriplets3} 
\end{figure}

\section{Combinatorial construction of isospectral\\ graphs}

There is a combinatorial method to easily generate large numbers of isospectral pairs and, more generally, isospectral sets of graphs containing many members. 

This is done by attaching any compact graph to specific vertices on graphs belonging to some particular isospectral set. These vertices have the same Titchmarsh-Weyl $M$-functions, to be described below. Fig. \ref{fig:aattach} illustrates the method. This isospectral pair is the same as in Figs. \ref{fig:aloop}a) and b) but we set their length to four for graphic simplicity. We add a vertex of valence two at the indicated positions. If we now attach any compact graph, having any boundary conditions, at these positions we find that the resulting graphs will form an isospectral pair. The attachment is done by identifying one vertex of $\Gamma$ with the indicated vertex, creating a pendant graph, as exemplified in Fig. \ref{fig:aattach}c-d). The attachment must be done identically to each member of the pair. We here see that although two non-isomorphic graphs with rationally independent edge lengths must have different spectra, they can have isospectral partners which do not have rationally independent edge lengths, as explained in the figure caption. The two shown graphs are the members of a generating set that can generate an infinite family of isospectral pairs. We will sometimes call the indicated vertex a \textit{hot vertex}. The vertex of $\Gamma$ can have any valence. 

The attached graph can have infinite length and the frequency-dependent backscattering from two vertices with the same $M$-function will be the same. It is thus possible to experimentally check that two vertices have the same $M$-function which connects to experimental work on quantum graphs \cite{Hul}. Mugnolo and Pivovarchik have shown that it is possible to distinguish certain isospectral graphs by attaching infinite leads at different vertices and investigate the scattering properties of the resulting graphs \cite{Pivovarchik-Mugnolo}. This method will not work if the vertices have the same $M$-function. 

The graph in Fig. \ref{fig:aloop}c) does not seem to have a hot vertex. We denote that two graphs $\Gamma_i$ and $\Gamma_j$ are isospectral by: $\Gamma_i \simeq_{is} \Gamma_j$. 

\begin{figure}
\centering
\includegraphics[width=0.8\textwidth]{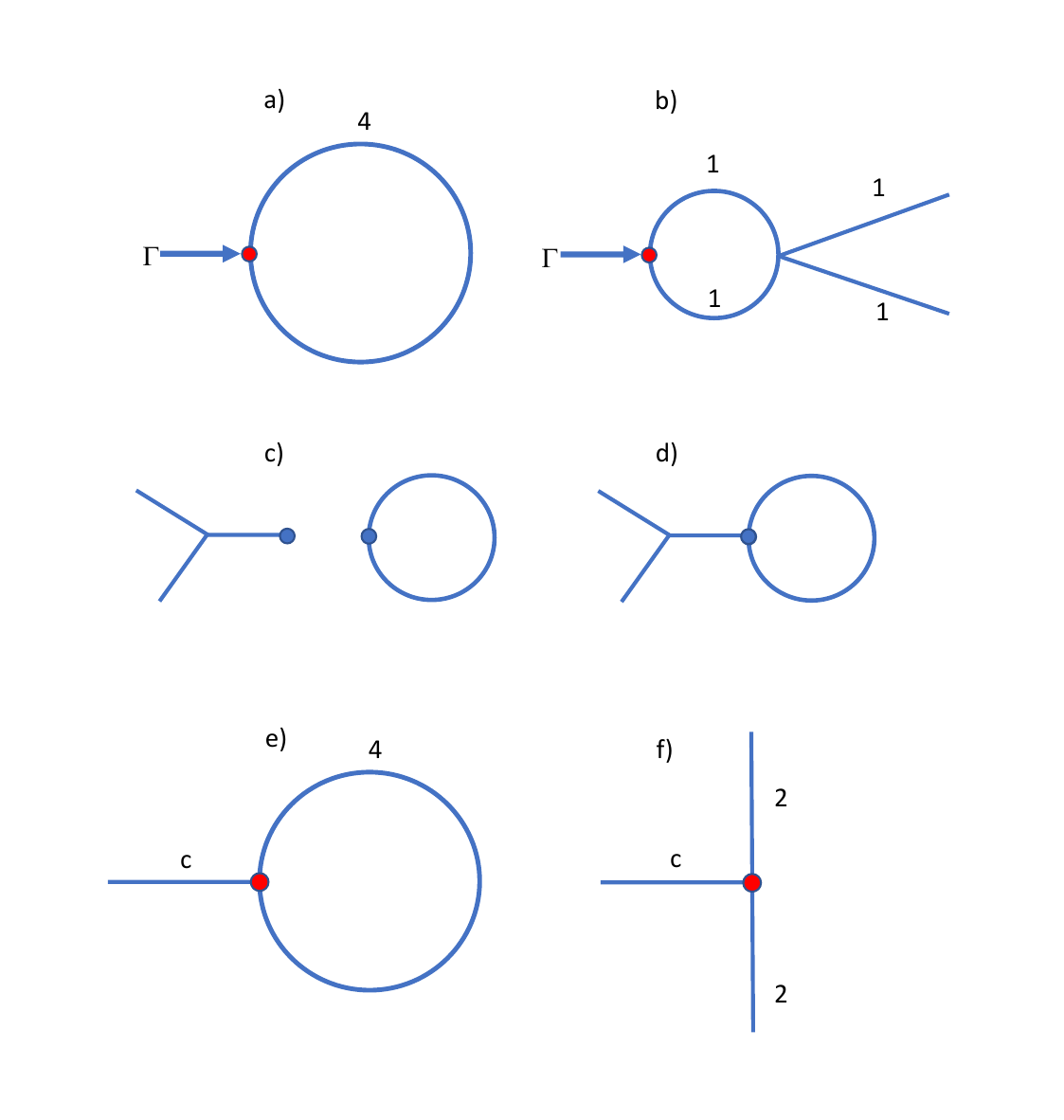}
\caption{Two isospectral graphs from Fig. \ref{fig:aloop}a) and b) with a vertex of valence two added. If any compact graph $\Gamma$ is attached to the valence two vertex (as indicated by the arrows) the resulting graphs will form an isospectral pair. These two graphs form a generating set for an infinite number of isospectral pairs. c) and d) shows how attachment of two graphs is done. The two highlighted vertices are identified with each other. Note that if $\Gamma$ is an interval of length, say $\pi$, and thus graph a) has rationally independent edges, it still has an isospectral partner b) which has rationally dependent edges. e) and f) shows a loop and an interval where we have attached an interval of length c. These graphs are used to illustrate how we can determine that the attachment vertices have the same $M$-function, explained in the main text.}
\label{fig:aattach} 
\end{figure}

\begin{figure}
\centering
\includegraphics[width=0.8\textwidth]{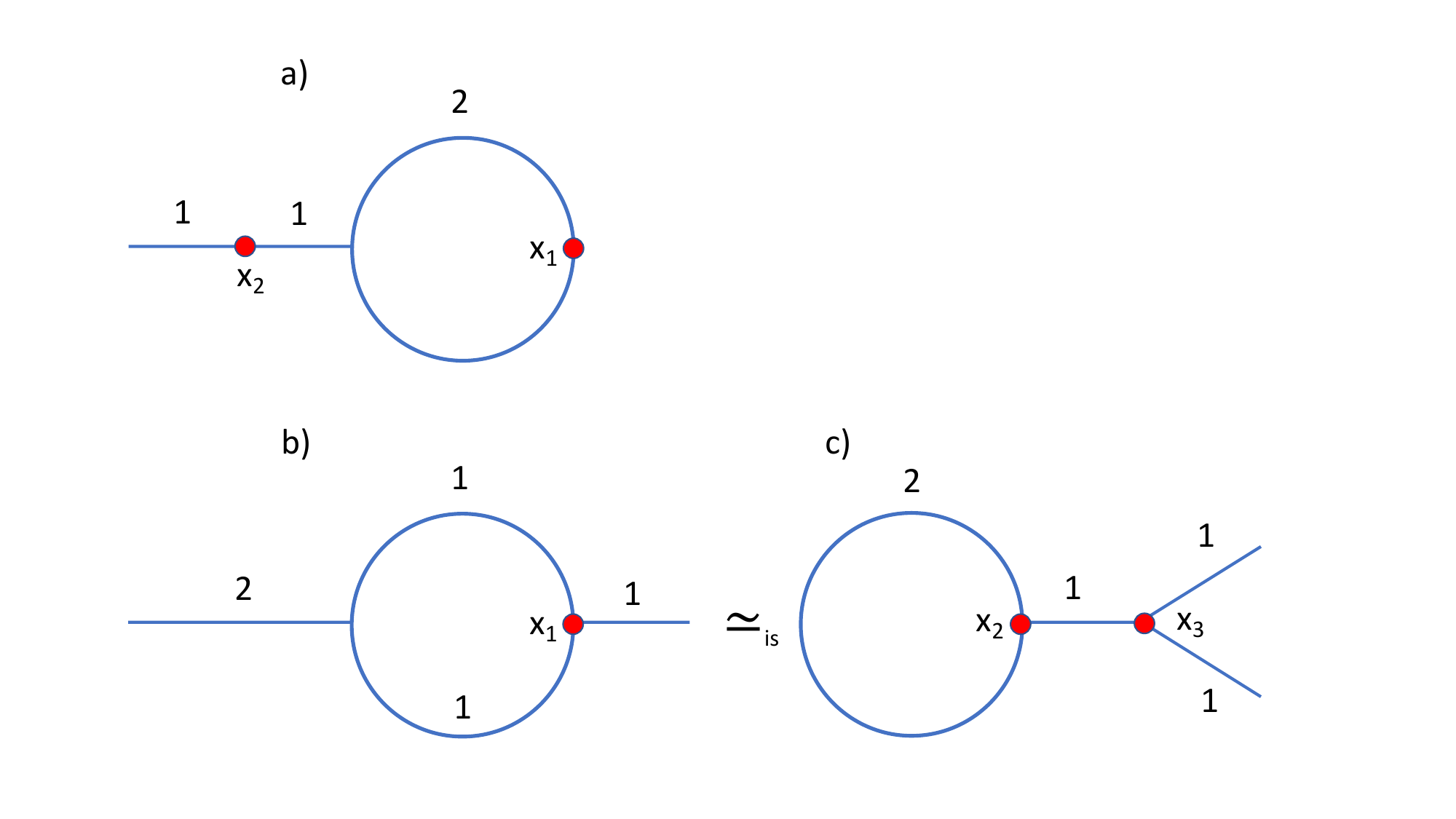}
\caption{a) For this graph we can attach any compact graph to either $x_1$ or $x_2$ and the generated graphs will be isospectral. b-c) Attaching an interval of length one to the graph in a) at $x_1$ or $x_2$ will generate these two isospectral graphs. These two graphs have three hot vertices in total. It is thus easy to generate isospectral triplets by attaching any compact graph to either $x_1$, $x_2$, or $x_3$.}
\label{fig:bhotvertexdemo}
\end{figure}

We remark that any graph containing a loop attached to a single vertex has an isospectral partner since the loop can be replaced with the isospectral partner of the loop shown in Fig. \ref{fig:aattach}b). 

\subsection{Titchmarsh-Weyl $M$-functions}

Our construction can be understood by Titchmarsh-Weyl $M$-functions (also known as Dirichlet to Neumann maps and often called $M$-functions) which are described in \cite{berkolaiko2013introduction} and \cite{kurasov.muller}. Let us consider a graph, $\Gamma$ with one special boundary (or contact) vertex, $\partial\Gamma$. The other vertices are called {\textit{interior vertices}}. Let us also consider a function $u(k)$ which satisfies the differential $\bf{L}$ and is thus a linear combination of $e^{i k x}$ and $e^{-i k x}$ at each edge. Let $u(k)$ further satisfy some boundary conditions, such as standard boundary conditions, on the interior vertices and be continuous on $\partial\Gamma$. $u(k)$ will in general not satisfy the standard or Neumann boundary condition at $\partial\Gamma$, unless $k$ is an eigenfrequency of $\Gamma$. Instead there will be a function, $M^\Gamma(k)$, which relates the value of $u(k)$ at $\partial\Gamma$, which we denote $u(k,\partial\Gamma)$, with the sum of its (outward) derivatives at $\partial\Gamma$, which we denote $u'(k,\partial\Gamma)$:

\begin{equation}
M^\Gamma(k) u(k,\partial\Gamma) = u'(k,\partial\Gamma) 
\end{equation}

If $u(k,\partial\Gamma)=0$ then $M^\Gamma(k)$ is not defined, and if $M^\Gamma(k)=0$ then $k$ is an eigenfrequency of $\mathbf{L}(\Gamma)$, if we demand that the boundary conditions at $\partial\Gamma$ are standard. There might be other eigenfrequencies that are not detected by $M^\Gamma(k)$, for instance because the corresponding eigenfunction is localised away from $\partial\Gamma$. We will give examples of such eigenfunctions below. It is usual to define the $M$-function when there are several boundary vertices, in which case the $M$-function is a matrix-valued function. But for us it suffices to consider only one boundary vertex. It is also possible to use boundary conditions at $\partial\Gamma$ which are not standard, but we will not consider this possibility either in this paper.

It was noted explicitly by Kurasov and Muller \cite{kurasov.muller} and implicitly by Berkolaiko and Kuchment \cite{berkolaiko2013introduction} that if we have a given graph, $\Gamma$, and a pair of isospectral graphs, $\Gamma_1$ and $\Gamma_2$, where $\Gamma_1$ and $\Gamma_2$ have the same $M$-functions at some boundary vertices, $\partial\Gamma_1$ and $\partial\Gamma_2$, then we can attach $\Gamma$ to either $\Gamma_1$ or $\Gamma_2$ at their respective boundary vertex and get an isospectral pair. 

Let the attachment vertex of $\Gamma$ be $\partial\Gamma$ and let us call the resulting graph after attachment $\Gamma \sqcup \Gamma_1$, if we have attached to $\Gamma_1$ and $\Gamma \sqcup \Gamma_2$, if we have attached to $\Gamma_2$. Let us call the identified vertex $\partial(\Gamma \sqcup \Gamma_1)$ or $\partial(\Gamma \sqcup \Gamma_2)$ depending on which graph was attached. It is clear that:
\begin{equation}
M^{\Gamma \sqcup \Gamma_1}(k) = M^\Gamma(k)+M^{\Gamma_1}(k)= M^\Gamma(k)+M^{\Gamma_2}(k) = M^{\Gamma \sqcup \Gamma_2}(k)
\end{equation}

The eigenfrequencies of $\Gamma \sqcup \Gamma_1$ and $\Gamma \sqcup \Gamma_2$ are thus the same, since they are determined by $M^{\Gamma \sqcup \Gamma_1}(k) =M^{\Gamma \sqcup \Gamma_2}(k) = 0$. Any eigenfrequency with an eigenfunction which is not detected by the $M$-function will be unaffected by the attachment.

If $\Gamma_1$ and $\Gamma_2$ are not isospectral then we will not necessarily get an isospectral pair if we attach only one graph to $\Gamma$. This is because it is possible that an eigenfunction has support only inside $\Gamma_1$ or $\Gamma_2$. We need to attach both $\Gamma_1$ and $\Gamma_2$ to $\Gamma$ at two vertices in the two different ways possible. The two resulting graphs are then isospectral. We illustrate this in a general way in Fig. \ref{fig:bdecoration}.

\subsection{Determining $M$-functions}

We will now describe a method to find vertices that have the same $M$-functions. We do this by attaching the end of an interval of length c, $\Gamma_c$, to the relevant vertex of the investigated graph, $\Gamma$. The $M$-function at $\partial\Gamma_c$ is $M^{\Gamma_c}(k)=k \text{Tan}(c k)$ and if $k$ is an eigenfrequency of $\Gamma \sqcup \Gamma_c$ we have $M^{\Gamma_c}(k)+M^{\Gamma}(k)=0$, which allows us to deduce $M^{\Gamma}(k)$ at these $k$-values. By varying $c$ we can get $M^{\Gamma}(k)$ in some interval on ${\Bbb R}$. Since $M^{\Gamma}(k)$ is an analytic function except at its poles we can analytically continue $M^{\Gamma}(k)$ to the whole complex plane, except at the poles. Furthermore, if two vertices (on the same or different graphs) have the same $M$-function in some interval, then the $M$-function is the same for these vertices. To calculate the $M$-function in this way is still difficult and Mathematica does not give explicit solutions for most $c$. However our interest is to determine if two vertices have the same $M$-function and this we can always do by inspecting the secular equation for $\Gamma \sqcup \Gamma_c$ for the two vertices and confirm that the $c$-dependent part is the same. The secular equation may have roots that do not depend on $c$, meaning that the corresponding eigenfunctions are not detectable by the $M$-function.

We illustrate our method with an example. Let us attach an interval, $\Gamma_c$, to a loop and to the center of an interval, as illustrated in Fig. \ref{fig:aattach}e-f). The secular equations are:

\begin{equation}
\Sigma (k)=\left(-1+e^{4 i k}\right) \left(-e^{2 i \text{c} k}+3 e^{2 i (\text{c}+2) k}+e^{4 i k}-3\right)
\end{equation}
for the graph in Fig. \ref{fig:aattach}e) and
\begin{equation}
\Sigma (k)=\left(1+e^{4 i k}\right) \left(-e^{2 i \text{c} k}+3 e^{2 i (\text{c}+2) k}+e^{4 i k}-3\right)
\end{equation}
for the graph in Fig. \ref{fig:aattach}f). The first factors in the secular equations do not depend on $c$ and correspond to eigenfunctions which have a node at the attachment vertices. These eigenfunctions are not detectable by the $M$-function and the corresponding eigenvalues are different for the loop and the interval. The second factor depends on $c$ and is the same for the two graphs. The $M$-function at the attachment vertices are thus the same for the loop and for the interval despite them not being isospectral. This we can verify by hand calculations.

For most of our graphs we have tested for vertices with the same $M$-function, but we have typically not introduced vertices of valence two on the edges. We may thus have missed some valence two vertices that have the same $M$-function. Graphs that are not isospectral have seldomly been tested for vertices with the same $M$-function. Thus, to be overly clear about the figures - vertices with the same colour have the same $M$-function only if they belong to one graph or to isospectral graphs. There are some exceptions, such as the loop and the interval, and those are always noted. Furthermore there are vertices with the same $M$-function due to obvious symmetries and those are seldom plotted. Doing so would overload the figures with meaningless information.

\section{More generating sets}

We have found that the tadpole graph shown in Fig. \ref{fig:bhotvertexdemo}a) has two hot attachment vertices. If the attached graph is an interval with unit length we get the two isospectral graphs shown in Fig. \ref{fig:bhotvertexdemo}b-c). These two graphs have three hot vertices, such that if we attach any compact graph to $x_1$, $x_2$ or $x_3$ the resulting graphs have the same spectrum, Why this is the case is explained below (Fig. \ref{fig:bmirror}). If we attach a an interval to the graph in Fig. \ref{fig:bhotvertexdemo}a) at the two hot vertices we will generate an isospectral pair. But this pair will in general only have two hot vertices ($x_1$ and $x_3$). We thus have two generating sets in Fig. \ref{fig:bhotvertexdemo}, one consisting of a single graph and the second consisting of two graphs. We will call the set in Fig. \ref{fig:bhotvertexdemo}b-c) $Q$ and the graph in Fig. \ref{fig:bhotvertexdemo}a) we denote $R$ and we will see it as a set with two members, differing only in the attachment vertex. The set thus contains two "pointed graphs", where a "pointed graph" is the set consisting of a graph and its (single) attachment vertex. It is also somewhat more convenient to consider the set $Q$ to have three pointed graphs as members. 

%We here note that although two non-isomorphic graphs with rationally independent edge lengths must have different spectra, they can have isospectral partners which do not have rationally independent edge lengths.

So far we have discussed three generating sets, shown in Fig. \ref{fig:aattach} and in Fig. \ref{fig:bhotvertexdemo} where the members are isospectral. We have found a very flexible generating set where the members are not all isospectral to each other. Fig. \ref{fig:btadpools} shows this set, which contains a loop, an interval, tadpool graphs and chains of loops. The chain of loops can contain any set of loops as long as the total length is one. These graphs have one attachment vertex associated with them, except the chain of loops which has two. The attachment vertices have all the same $M$-function. We will call the set $P(L)$, where $L$ is the length of the graphs. Fig. \ref{fig:btadpools}a) shows a subset of $P(12)$ and Fig. \ref{fig:btadpools}b) shows the members of $P(1)$. $P(1)$ contains the parametrised tadpole graphs along with the loop, which we sometimes call $\Gamma_L$ and the interval which we sometimes call $\Gamma_I$. One example of a chain of loops is also shown. The tadpole graphs can be seen to "interpolate" between the loop and the interval. Unless otherwise stated we choose the length of the graphs to be one, in which case the length parameter might be omitted. The generating set in Fig. \ref{fig:aattach} is a subset of $P(4)$. 

Decorating graphs with members of $P$, $Q$ or $R$ gives a very powerful way to generate isospectral graphs and we will illustrate some results. In Fig. \ref{fig:bdecoration} we show some examples of graphs which has been decorated with members of $P$, $Q$ and $R$. Note that we can also decorate a graph using two sets of graphs e. g. those coming from $P$ and $R$ using different attachment vertices. We illustrate this in Fig. \ref{fig:bdecoration}f-g). That is, among one set of vertices we decorate with members of $P$ in different permutations as described above, and among a different set of vertices we decorate with members of $Q$ or $R$. 
\begin{figure}
\centering
\includegraphics[width=0.8\textwidth]{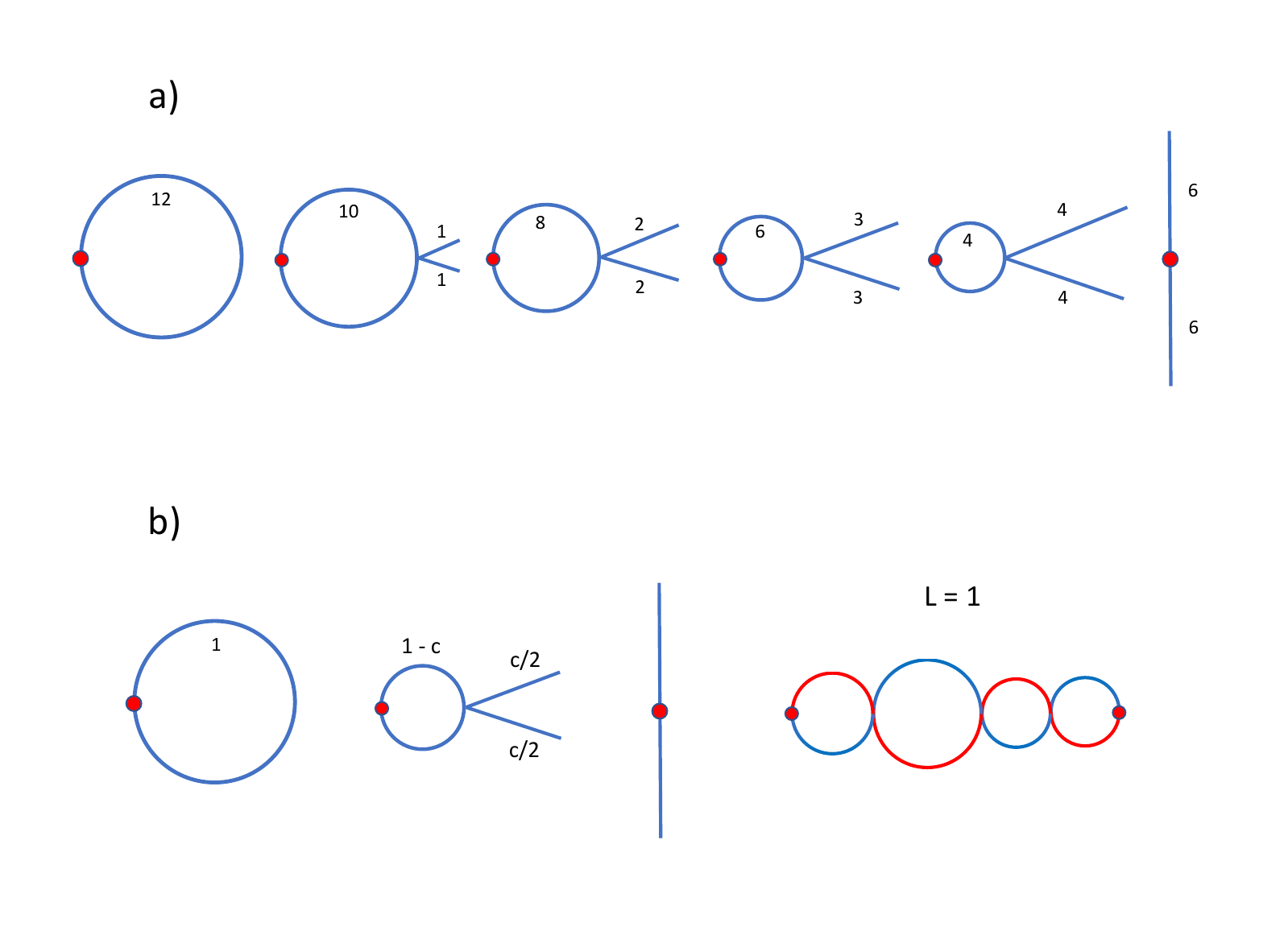}
\caption{a) Members of the generating set P(12), with their attachment vertices. b) Members of the generating set P(1) are of the shown parametrised form, and include the loop, the interval, tadpole graphs and one example of a chain of loops. The length is one but a different generating set can be obtained by changing the length. Red vertices have the same $M$-function. The chain of loops have two paths (branches) connecting the attachment vertices, shown in red and blue. Any chain of loops have the same $M$-function at the end vertices, as long as the total length is kept constant.}
\label{fig:btadpools}
\end{figure}

\begin{figure}
\centering
\includegraphics[width=1.0\textwidth]{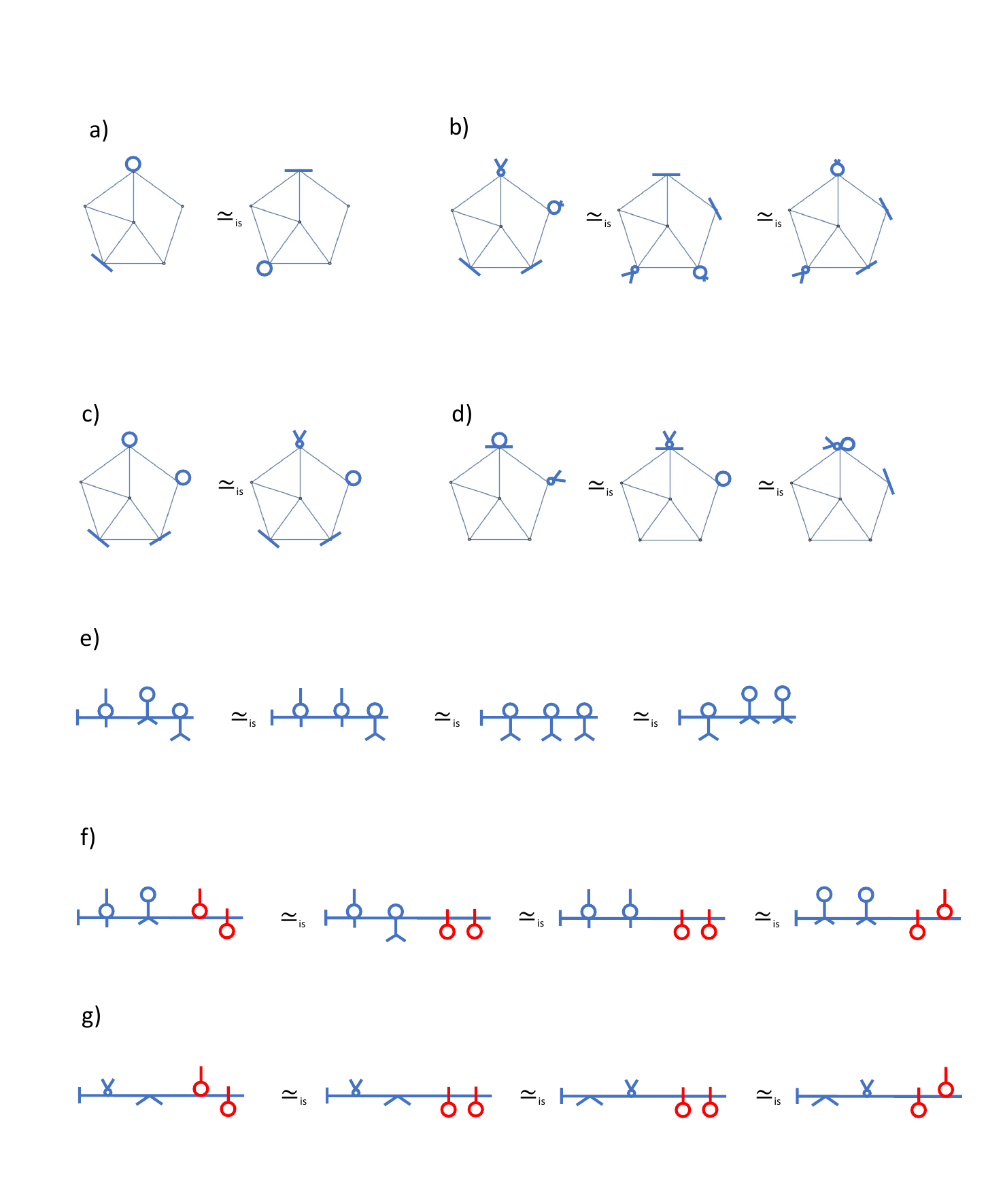}
\caption{ Attaching members of the generating set $P(1)$, shown in Fig. \ref{fig:btadpools}, to a graph in different permutations will generate isospectral graphs a) - d). There is freedom to exchange two attached graphs that are isospectral as shown in c). It is possible to attach two members of $P(1)$ to the same vertex and different permutations will still produce isospectral graphs as illustrated in d). In e) we show some possibilities of attaching members of the generating set $Q$, shown in Fig. \ref{fig:bhotvertexdemo}, to a simple graph, generating isospectral graphs. In f) we have attached members of $Q$ (blue) and $R$ (red) among different vertices in different combinations generating isospectral graphs. In g) we have done the same using members of $P(1)$ (blue) and $R$ (red). More examples could easily be given, also using all three generating sets, but the general idea should be clear.}
\label{fig:bdecoration}
\end{figure}

In Fig. \ref{fig:bmirror} we show that the graph in Fig. \ref{fig:bhotvertexdemo}c) is in fact an interval, $\Gamma_1$, which has been decorated with $\Gamma_L$ and $\Gamma_I$. We can decorate any graph containing $\Gamma_1$ as a subgraph, in two different ways by $\Gamma_L$ and $\Gamma_I$, illustrated in Fig. \ref{fig:bmirror}, and get isospectral graphs. This explains why the graph in Fig. \ref{fig:bhotvertexdemo}c) has two hot vertices. If the interval, $\Gamma_1$, has length 1/2, then the graph in Fig. \ref{fig:bmirror} has the isospectral partner shown in Fig. \ref{fig:bhotvertexdemo}b). We have not yet found an isospectral partner if the length of $\Gamma_1$ is not 1/2 except if it is zero and then the loop is an isospectral partner.

\begin{figure}
\centering
\includegraphics[width=1.0\textwidth]{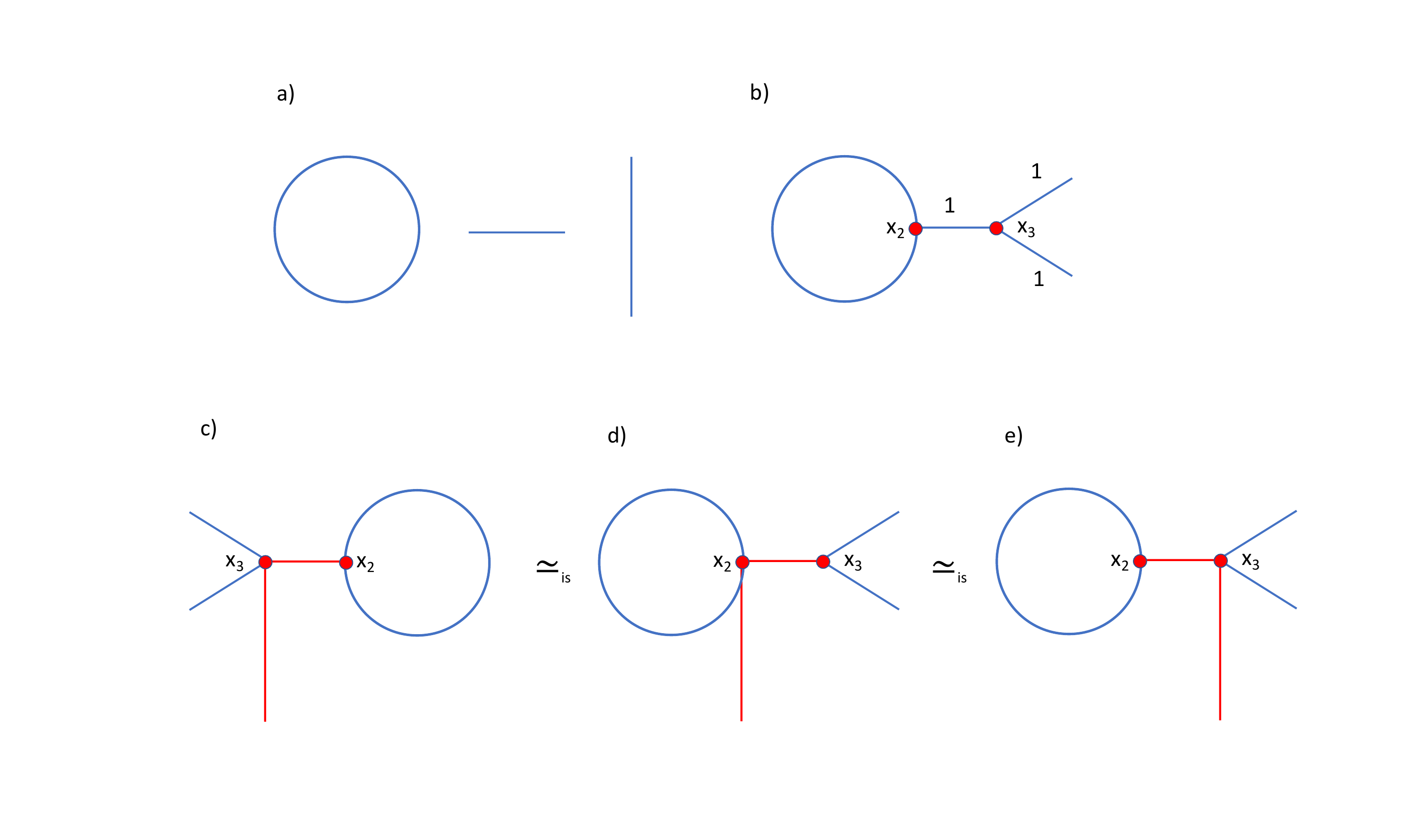}
\caption{a) - b) Attaching $\Gamma_{L}$ and $\Gamma_{I}$ $\in$ $P(1)$ to an interval will generate the graph in Fig. \ref{fig:bhotvertexdemo}b) for a suitable length of the decorated interval. c) - e) Attaching 
$\Gamma_{L}$ and $\Gamma_{I}$ in two ways to the graph in red and reflecting one of the graphs shows that vertices $x_2$ and $x_3$ have the same $M$-function.}
\label{fig:bmirror}
\end{figure}

\begin{figure}
\centering
\includegraphics[width=0.85\textwidth]{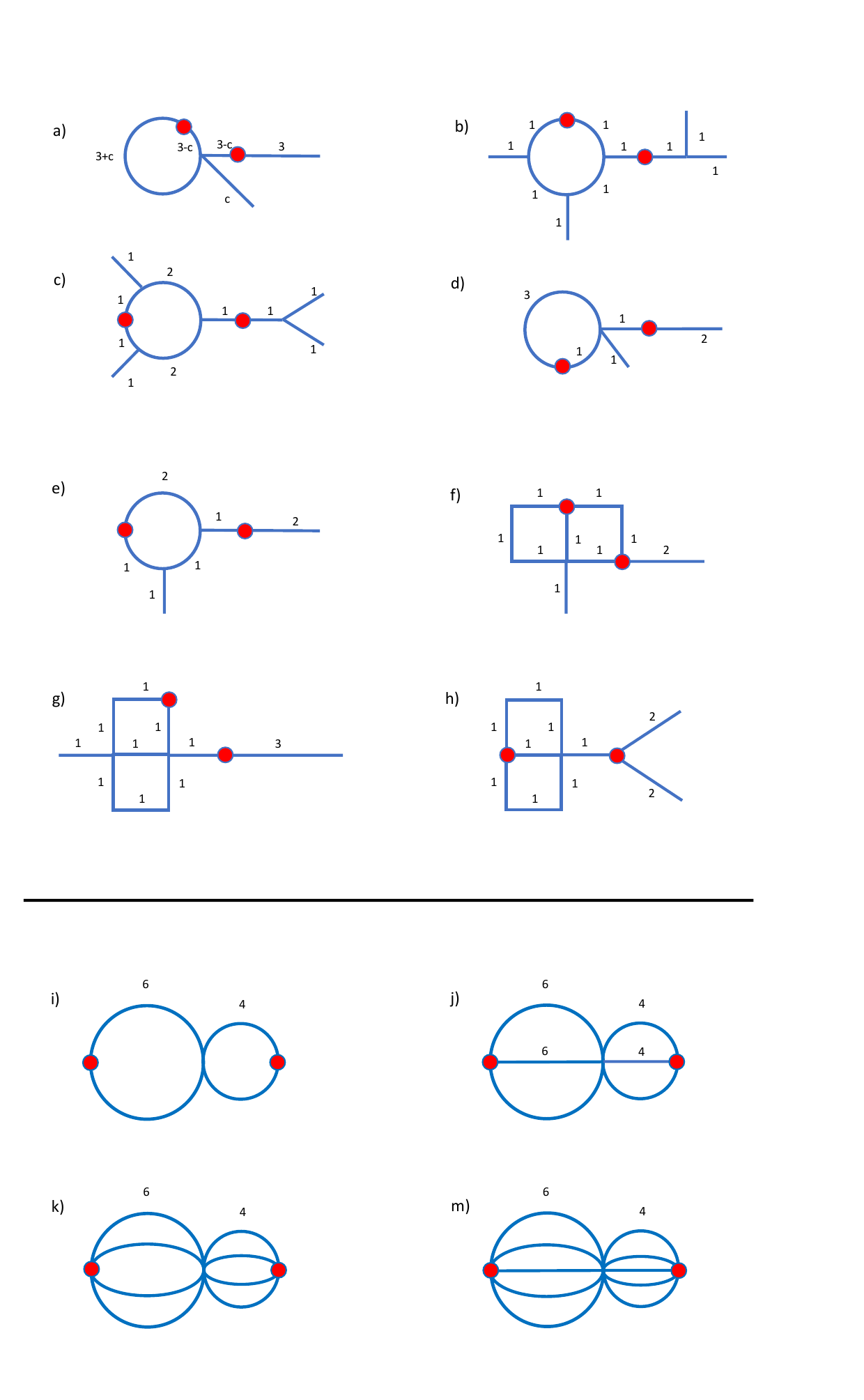}
\caption{ a) A graph having two hot vertices, which depend on a parameter c. b) - h) Different graphs having two hot vertices. The loop in c) has length 6. The loops in b) and d) - e) have length 4. Some of the graphs have three hot vertices where two are related by symmetry. We only display hot vertices not related by obvious symmetries. i-m) The first four members of an infinite family of graphs, each having two hot vertices.}
\label{fig:btwohotvertices}
\end{figure}

\section{Generating sets with one member and two hot vertices}
We have found several graphs which have two hot vertices, and thus form generating sets. These graphs are shown in Fig. \ref{fig:btwohotvertices}. The graph in Fig. \ref{fig:btwohotvertices}a) depends on a parameter c, and thus form an uncountable family of graphs with two hot vertices. These graphs are related to the tadpole graphs shown in Fig. \ref{fig:btadpools}. 

We can attach any compact graph, $\Gamma$, to the two hot vertices in two different ways as illustrated in Fig. \ref{fig:ctwohotmirror}. That is, we use two vertices of $\Gamma$ and attach them in the two ways possible to the hot vertices. The resulting graphs are then isospectral. We can also attach two different graphs to the two hot vertices in two different ways as illustrated in Fig. \ref{fig:ctwohotmirror}. The resulting graphs are isospectral. $\Gamma$, $\Gamma_1$ and $\Gamma_2$ can have any boundary conditions, as long as they make $\mathbf{L}$ selfadjoint.

What we do find interesting is that we can make an infinite set of generating sets having two hot vertices (checked by computer), as shown in Fig. \ref{fig:btwohotvertices}i,j,k,m). These graphs consists of two connected "pumpkin" graphs. We will call the number of edges in a pumpkin graph the degree of the pumpkin graph, and the degree of a loop is two. Since we can not only change the degree of the pumpkin graphs but also the length of the edges, we have a very flexible set of graphs having two hot vertices. 

\begin{figure}

\centering
\includegraphics[width=1.0\textwidth]{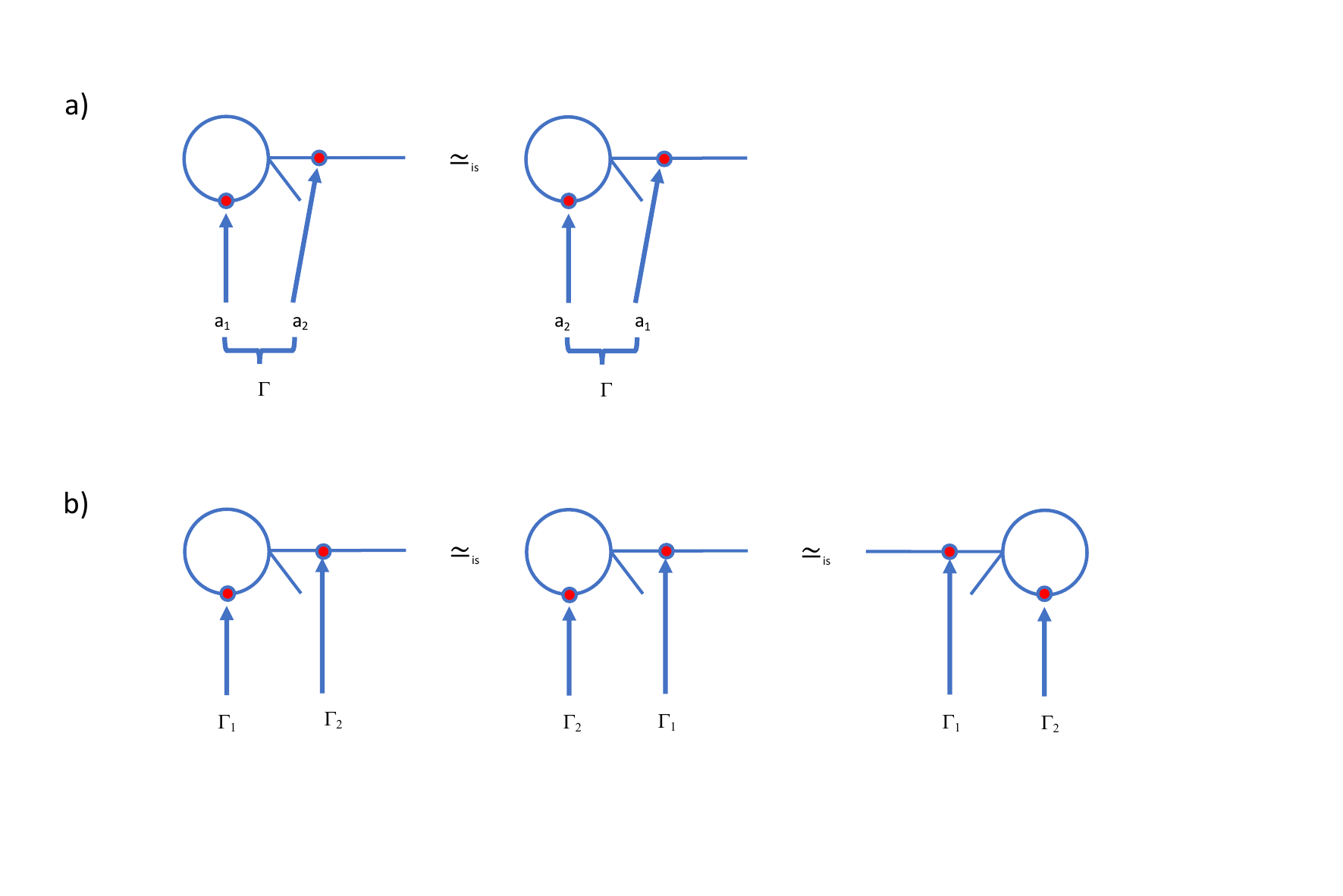}
\caption{a) We here show the graph in Fig. \ref{fig:btwohotvertices}d) where we have attached a graph, $\Gamma$, to two hot vertices using two vertices $a_1$ and $a_2$ of $\Gamma$. This can be done by attaching the vertices $a_1$ and $a_2$ to the hot vertices in two ways. The resulting graphs are isospectral. We can also attach two different graphs in two different ways as illustrated in b) which can be seen as a flipping of the top graph. The resulting graphs are isospectral. Note that $\Gamma$, $\Gamma_1$ and $\Gamma_2$ can have any boundary conditions, as long as they make $\mathbf{L}$ selfadjoint.}
\label{fig:ctwohotmirror}
\end{figure}

\section{Superlattices and aperiodic graphs that have the same spectra}

Using graphs with two hot vertices it is possible to create long chains of isospectral graphs. Especially interesting are chains of pumpkin graphs, which has been noted before, although in a different context \cite{berkolaiko2019surgery}. A pumpkin chain, where the pumpkin graphs have the same degree (but not necessarily the same length) does not change its spectrum if we change the order of the pumpkin graphs. We illustrate this for chains of loops in Fig. \ref{fig:cchainofloops}. It can be seen that very simple isospectral graphs, consisting of only three loops can be constructed. Such chains also have two hot vertices at their ends which indeed do have the same $M$-functions, proved below. It is also possible to have pumpkin chains where the pumpkin graphs have different degrees. In such a chain one is free to change the order of the graphs in any subchain consisting of pumpkin graphs having the same degree. This we illustrate in Fig. \ref{fig:cchainloopspumpkins}. We have the following theorem.

\begin{figure}
\centering
\includegraphics[width=0.8\textwidth]{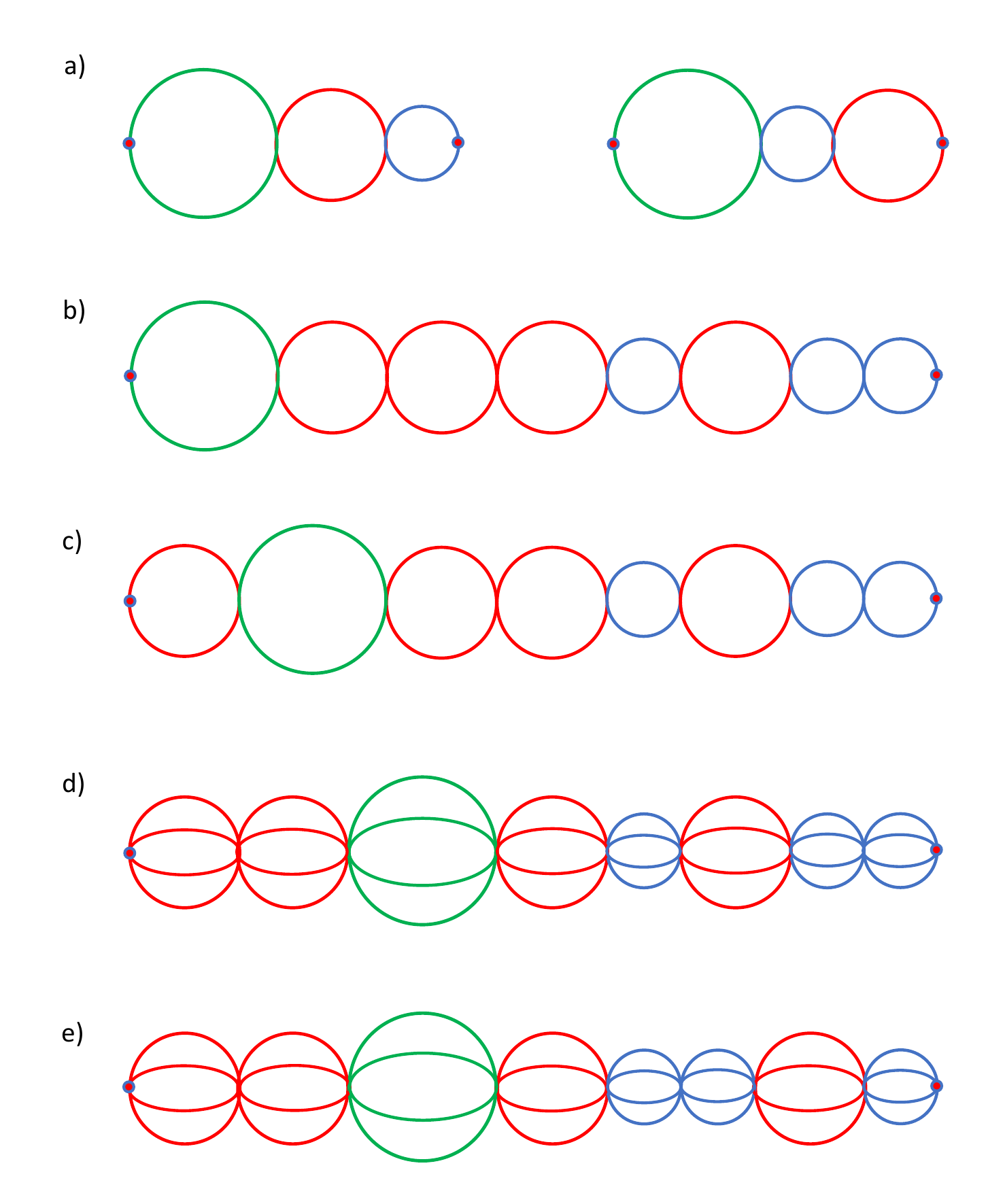}
\caption{ a) Two simple isospectral graphs. b-e) chains of pumpkin graphs. b) is isospectral to c) and d) is isospectral to e). All these graphs have hot vertices which are shown.}
\label{fig:cchainofloops}
\end{figure}

\begin{figure}
\centering
\includegraphics[width=0.8\textwidth]{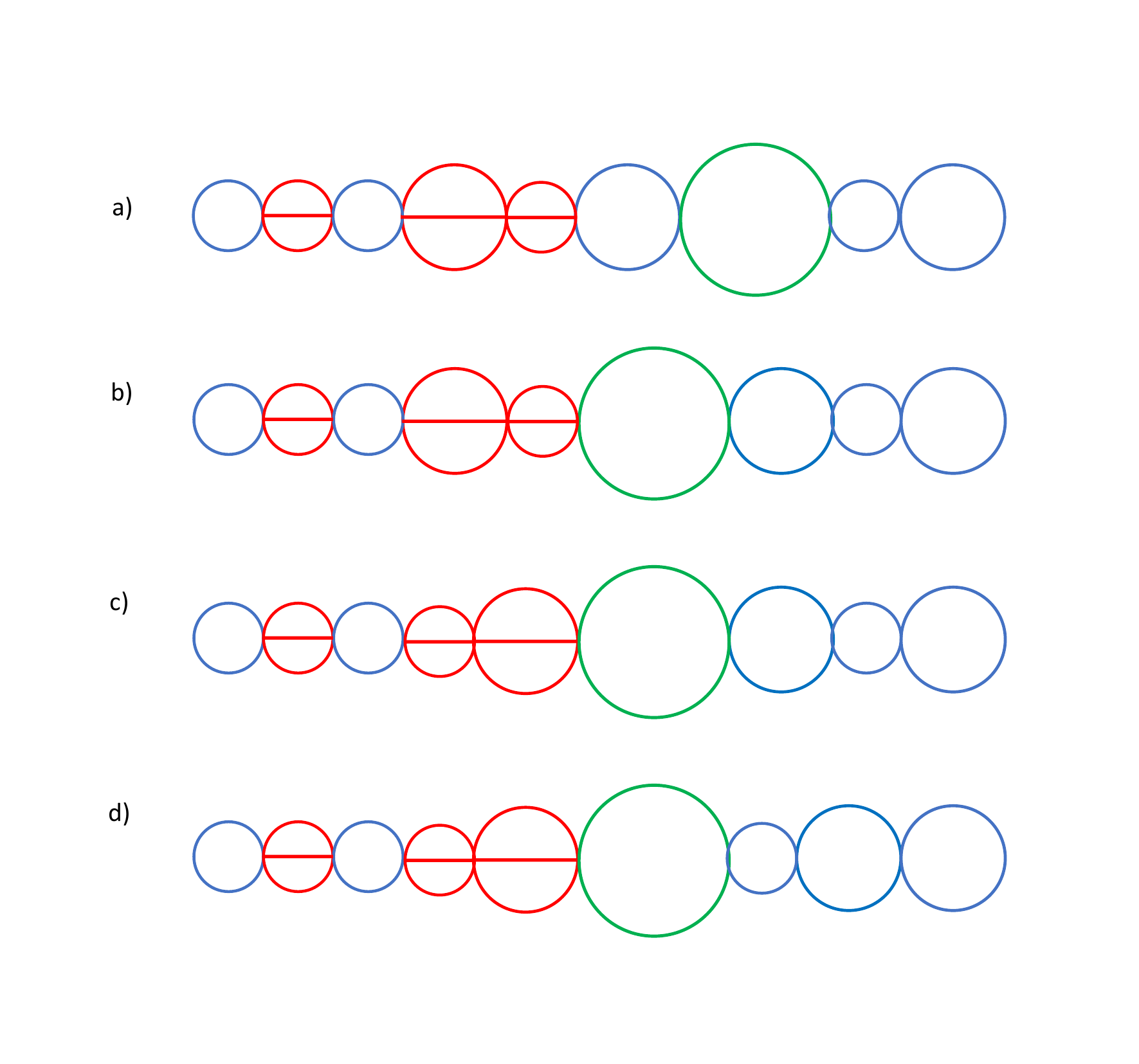}
\caption{ Four isospectral graphs consisting of chains of pumpkin graphs with degree two and three. Any set of adjacent pumpkin graphs with the same degree can have any order. These graphs do not seem to have any hot vertices.}\label{fig:cchainloopspumpkins}
\end{figure}

\begin{figure}
\centering
\includegraphics[width=0.8\textwidth]{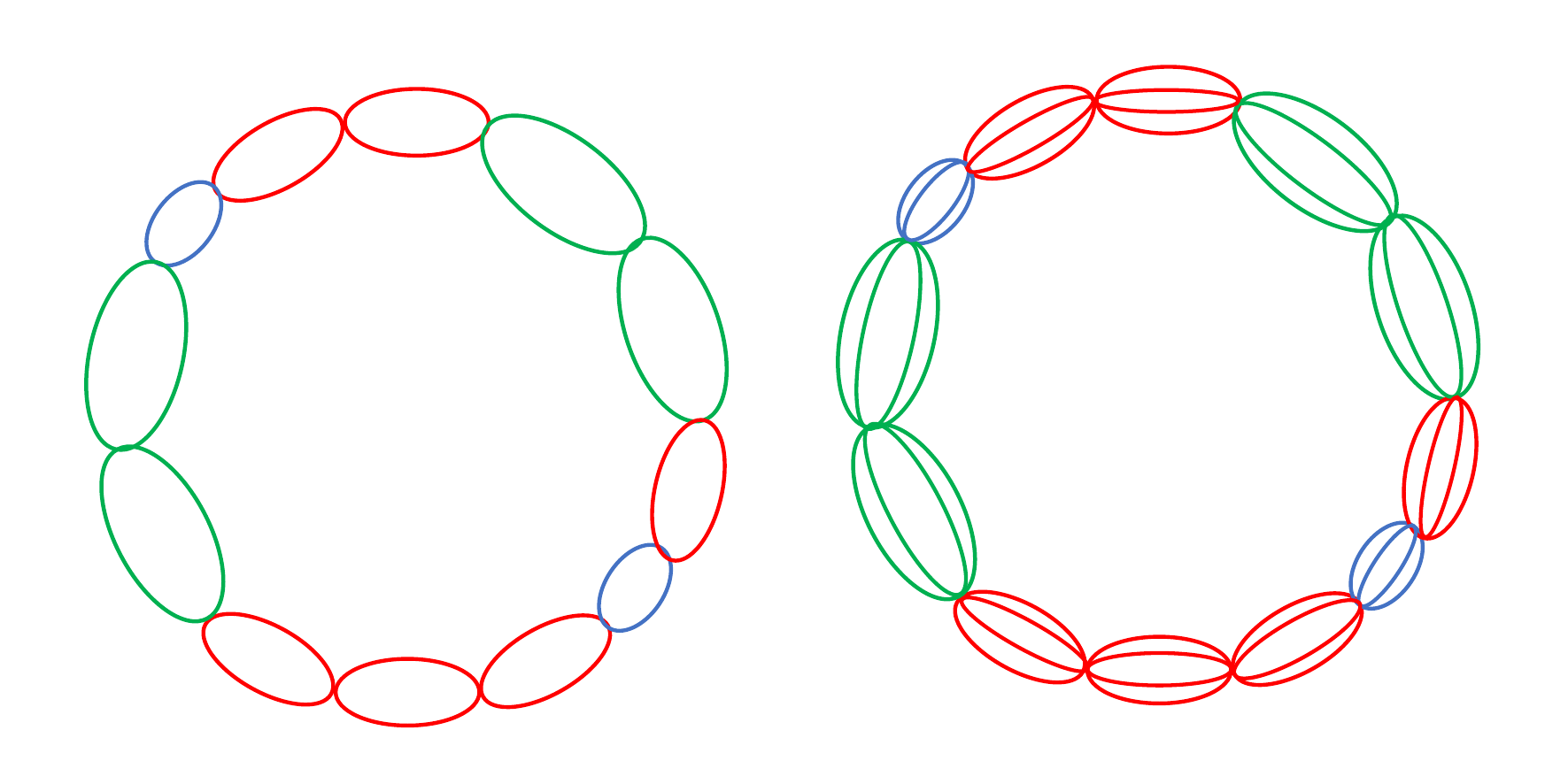}
\caption{ a) Two rings constructed from loops and pumpkin graphs. The order of the loops or the pumpkin graphs does not influence the spectrum.}
\label{fig:cringofloops}
\end{figure}

\begin{theorem}
The spectrum of a chain of pumpkin graphs, possibly with different lengths, but with the same degree, does not depend on the order of the pumpkin graphs, not even if the ends of the chains are attached to two compact graphs or two vertices of a compact graph. Each end of such a chain is a hot vertex. We illustrate such chains of pumpkin graphs in Fig. \ref{fig:cchainofloops}.
\end{theorem}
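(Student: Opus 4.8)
The plan is to exploit the internal symmetry of each pumpkin and reduce the chain, as far as its two end vertices are concerned, to a single interval whose length is the total length of the chain. First I would split the eigenfunctions on one pumpkin of degree $d$ and edge length $\ell$ into a symmetric and an antisymmetric part. Writing the solution on the $j$th parallel edge as $f_j$ and imposing continuity at the two pumpkin vertices forces every $f_j$ to take the same boundary values; since a solution of $-f''=k^2 f$ on an interval is determined by its boundary data, all $f_j$ coincide whenever $\sin(k\ell)\neq 0$. Thus the only boundary-visible (symmetric) mode is the one in which all $d$ edges carry a common function $f$, and the sum of outward derivatives of the pumpkin at each of its vertices equals $d$ times the single-edge derivative of $f$. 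The antisymmetric combinations, with $\sum_j f_j\equiv 0$, vanish at both pumpkin vertices, occur precisely at $k=n\pi/\ell$, are invisible to the $M$-function, and form a space of dimension $d-1$ for each such $k$.

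Next I would track the symmetric mode along the chain by a transfer matrix. Using at each vertex the common value $u$ together with the common per-edge rightward derivative $u'$, the Neumann condition at a vertex where two pumpkins of the same degree $d$ meet reads $-d\,u'_{\text{left}}+d\,u'_{\text{right}}=0$, i.e.\ it reduces to continuity of $u$ and $u'$, exactly as across a valence-two point on a single edge. Hence across the $i$th pumpkin the vector $(u,u')$ is multiplied by the single-edge transfer matrix
\begin{equation}
T_{\ell_i}=\cos(k\ell_i)\,I+\frac{\sin(k\ell_i)}{k}\,J,\qquad J=\begin{pmatrix}0&1\\-k^2&0\end{pmatrix},
\end{equation}
and the chain transfer matrix is $T_{\ell_n}\cdots T_{\ell_1}$. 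Because $J^2=-k^2 I$ we have $T_{\ell}=\exp(\ell J)$, so these matrices form a commutative one-parameter group and $T_{\ell_n}\cdots T_{\ell_1}=T_{\ell_1+\cdots+\ell_n}=T_L$, where $L$ is the total length. The symmetric transfer between the two ends therefore depends only on $L$, not on the order of the pumpkins.

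From this the theorem follows in two steps. The full spectrum is the union of the boundary-visible part, governed by $T_L$ together with the common end factor $d$, and the localised antisymmetric part, which is the multiset union over the pumpkins of the $(d-1)$-fold eigenvalues at $k=n\pi/\ell_i$. The first part is order-independent because $T_L$ is; the second is order-independent because a multiset union is commutative. I would then read off the two-port $M$-function (relating the values at the two ends to the sums of outward derivatives, which equal $d\,u'$) directly from $T_L$ and $d$: it too depends only on $L$, so both end vertices carry order-independent $M$-functions and are hot vertices. Consequently, attaching any compact graph, a further chain, or two vertices of a single compact graph at the ends merely adds fixed $M$-function contributions to these order-independent end $M$-functions, by the additivity $M^{\Gamma\sqcup\Gamma_1}=M^{\Gamma}+M^{\Gamma_1}$ used earlier, while leaving the localised modes (which vanish at the ends) untouched; the resulting spectrum is again order-independent.

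The step I expect to be the main obstacle is the bookkeeping at the degenerate frequencies $k=n\pi/\ell_i$, where $\sin(k\ell_i)=0$ and the symmetric-mode reduction is singular. There one must verify that the boundary-visible and localised contributions together account for the complete spectrum with the correct multiplicities, and that no eigenfunction is double-counted or missed when a symmetric frequency coincides with a localised one on a different pumpkin. I would settle this by showing that the secular determinant factorises into a chain factor depending only on $L$ and $d$ times a product of localised factors $\bigl(\sin(k\ell_i)\bigr)^{d-1}$, one per pumpkin, and by confirming that this factorisation is manifestly symmetric in the $\ell_i$, which is exactly the order-independence asserted in the statement and illustrated in Fig.~\ref{fig:cchainofloops}.
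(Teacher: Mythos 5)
Your proof is correct, but it takes a genuinely different route from the paper's. The paper argues combinatorially: any reordering of the pumpkins is a product of transpositions of adjacent pumpkins; a pair of adjacent same-degree pumpkins forms a subgraph with two hot end vertices having the same $M$-function (a property the paper establishes for connected pumpkin pairs largely by computer checks, cf.\ Fig.~\ref{fig:btwohotvertices}); and flipping such a two-port subgraph inside any ambient graph (Fig.~\ref{fig:ctwohotmirror}b) leaves the spectrum unchanged, which realises each transposition. You instead decompose every eigenfunction, pumpkin by pumpkin, into a symmetric part (equal on all $d$ parallel edges) plus deviations summing to zero; the deviations vanish at both pumpkin vertices and have identically vanishing derivative sum, so they satisfy all vertex conditions by themselves and contribute the localised $(d-1)$-fold eigenvalues at $k=n\pi/\ell_i$, while the symmetric part reduces the chain to a single interval through the commuting one-parameter group $T_\ell=\exp(\ell J)$, whose ordered product depends only on $L=\sum_i\ell_i$. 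In fact this direct-sum structure already settles the multiplicity bookkeeping you flag as the main obstacle: since the symmetric and deviation parts independently satisfy all vertex conditions, the eigenspace of any $k^2$ splits as the order-independent reduced-interval (plus attachments) eigenspace plus the order-independent multiset of localised modes, so no secular-determinant factorisation is strictly required. Your route buys a self-contained analytic proof that does not lean on the computer-verified two-hot-vertex property, makes the role of the equal-degree hypothesis explicit (the factor $d$ cancels at the interior vertices), and yields as by-products the explicit chain $M$-function ($d$ times that of an interval of length $L$) and the factorised secular equation --- in effect generalising the paper's Theorem~3 (whose proof for loop chains uses exactly your symmetric/antisymmetric idea with $d=2$) and Theorem~4 to pumpkins of arbitrary degree. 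The paper's route buys brevity and a reusable surgery principle, namely flip invariance of two-port subgraphs whose $M$-matrix is symmetric under exchanging the ports, which applies beyond pumpkin chains.
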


\begin{proof} 
The proof is very simple. We can reach any permutation of a chain of pumpkin graphs by performing a set of transpositions of two adjacent pumpkin graphs \cite{Rotman,Zeilinger}. Two connected pumpkin graphs have two hot vertices each having the same $M$-function. Thus transposing two pumpkin graphs (even if the hot vertices are attached to compact graphs) does not change the spectrum, see Fig. \ref{fig:ctwohotmirror}b) and we are done.
\end{proof}

This theorem is simple and thus maybe of not much interest. Nevertheless we find it intriguing that a periodic structure can be transformed into a periodic structure with a different periodicity or even into an aperiodic structure without changing the spectrum. An infinite graph with a periodic structure obeys Floquets theorem and band-structure emerges \cite{berkolaiko2013introduction}. An infinite graph which has an aperiodic structure is expected to behave differently (at least with respect to the eigenfunctions) and e. g. Anderson localisation can occur \cite{Anderson}. However, any chain of e. g. loops have permutation invariant spectra also if their length is very long. We have directly checked by computer that the order of a set of loops does not change the spectrum.

If we have a set of loops in a chain with lengths ($L_1,L_2,L_3,...,L_m$) where the number of loops of length $L_i$ is $n_i$ and the total length of the chain is $L$ then we have the following secular equation:
\begin{equation}
\Sigma (k)=\left(-1+e^{i k L}\right)\left(-1+e^{L_1 i k}\right)^{n_1} \left(-1+e^{L_2 i k}\right)^{n_2}  ... \left(-1+e^{L_m i k}\right)^{n_m} 
\end{equation}

with eigenfrequencies: 

\begin{itemize}
\item 0 with multiplicity one.
\item $2 \pi N/L$ with multiplicity one. 
\item $2 \pi N/L_1$ with multiplicity $n_1$.
\item $2 \pi N/L_2$ with multiplicity $n_2$.\\
...
\item $2 \pi N/L_m$  with multiplicity $n_m$. 
\end{itemize}

where $N$ is a positive integer. %$0$ with multiplicity one, $2 \pi N/L$ with multiplicity one, and a set of eigenfrequencies $2 \pi N/L_i$ with multiplicity $n_i$. $N$ is a positive integer. 

We see that there are eigenfrequencies that do not depend on the length of the graph, $L$ but instead on the length of the individual loops $L_n$. There is also a set of eigenfrequencies, $2 \pi N/L$, which become denser and denser over the whole non-negative real axis when $L$ increases. There are no spectral gaps, in the sense that any interval will contain eigenfrequencies if $L$ is larger than some constant. In 1D it is expected that spectral gaps should be prevalent for periodic graphs \cite{berkolaiko2013introduction} and Avron et al. have shown that periodic chains of pumpkin graphs can have large spectral gaps \cite{Avron1994}.

 It is not difficult to prove that the above secular equation is correct which will be done in the following section.

We can get some information about the $M$-function for a chain of loops.

\begin{theorem}
The $M$-function at the end vertices of a chain of loops, see Fig. \ref{fig:btadpools}, depends only on the length of the chain, $L$, and is the same as the $M$-function for a loop of length $L$.
\end{theorem}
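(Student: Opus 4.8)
The plan is to compute the $M$-function of a single loop directly, and then to show, by exploiting the reflection symmetry that interchanges the two edges of each loop, that an entire chain of loops collapses—as far as an end vertex can detect—to a single interval of length $L/2$ carrying a Neumann condition at its far end. Because this reduced interval depends on the chain only through the total length $L=\sum_i \ell_i$ (where $\ell_i$ is the circumference of the $i$-th loop), both assertions of the theorem—dependence on $L$ alone and equality with the loop of length $L$—will follow at once, the loop itself being just the one-loop case of the same reduction.

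First I would fix one end vertex as the boundary vertex $\partial\Gamma$ and treat every remaining vertex, including the far end $v_m$, as an interior Neumann vertex. Parametrizing the two edges of the $i$-th loop by $x\in[0,\ell_i/2]$ running from $v_{i-1}$ to $v_i$, I would split the pair of edge functions into a symmetric part $s_i=(u_i^{(a)}+u_i^{(b)})/2$ and an antisymmetric part $d_i=(u_i^{(a)}-u_i^{(b)})/2$. Continuity at each interior vertex forces all incident edge values to coincide, which yields $d_i(\ell_i/2)=0$, $d_{i+1}(0)=0$ and $s_i(\ell_i/2)=s_{i+1}(0)$; continuity at $\partial\Gamma$ gives $d_1(0)=0$ and continuity at $v_m$ gives $d_m(\ell_m/2)=0$. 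Hence each $d_i$ obeys Dirichlet conditions at both ends of its loop, so it vanishes except at the isolated frequencies $k=2\pi N/\ell_i$; these are precisely the length-dependent, locally supported modes already visible in the secular equation, and because $d_1(0)=0$ they contribute nothing to either the value $u(k,\partial\Gamma)$ or the derivative sum there.

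The symmetric part carries the content. Writing the Neumann condition at an interior vertex $v_i$ and using that both edges of a loop carry the same $s_i$, the factor of two in the derivative sum cancels between the two adjoining loops, leaving $s_i'(\ell_i/2)=s_{i+1}'(0)$ alongside $s_i(\ell_i/2)=s_{i+1}(0)$. This is exactly $C^1$-matching, so the symmetric data glue into a single solution of $-s''=k^2 s$ on an interval of length $\sum_i \ell_i/2 = L/2$, with the far-end condition $s'(L/2)=0$ inherited from the Neumann vertex $v_m$. Taking $s(x)=\cos\!\big(k(L/2-x)\big)$ gives $s'(0)/s(0)=k\tan(kL/2)$, and since two edges meet at $\partial\Gamma$ the derivative sum is $2s'(0)$, so
\begin{equation}
M^{\text{chain}}(k)=\frac{u'(k,\partial\Gamma)}{u(k,\partial\Gamma)}=2k\tan\!\left(\frac{kL}{2}\right).
\end{equation}
The same reduction applied to a single circle of circumference $L$ gives the identical value for $M^{\Gamma_L}(k)$ (consistent with the interval $M$-function $k\tan(ck)$ quoted earlier, doubled by the two edges at the vertex), which proves the equality; running the argument from the opposite end, with $v_0$ now the free Neumann vertex, shows the other end vertex shares this $M$-function.

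The step I expect to demand the most care is the vertex bookkeeping: confirming that continuity alone pins the antisymmetric parts to Dirichlet conditions while the Neumann sums upgrade the symmetric parts to genuine $C^1$-matching, and verifying that the solution defining the $M$-function is generically unique—a dimension count of the $4m$ edge parameters against the vertex constraints—so that at generic real $k$ it coincides with the purely symmetric solution. Once uniqueness and the symmetric reduction are in hand, the identity $M^{\text{chain}}=M^{\Gamma_L}=2k\tan(kL/2)$ extends by analytic continuation to all of $\mathbb{C}$ away from its poles, and its manifest independence of the individual circumferences $\ell_i$ delivers the stated dependence on $L$ alone.
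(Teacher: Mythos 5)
Your proof is correct and follows essentially the same route as the paper's: both decompose functions on the chain into symmetric and antisymmetric parts under exchange of the two branches (the two edges of each loop), discard the antisymmetric parts as invisible to the $M$-function since continuity forces Dirichlet conditions on them, and identify the symmetric parts with those of a single loop of length $L$. Your version is more explicit---folding the symmetric part to a Neumann interval of length $L/2$ and computing $M(k)=2k\tan(kL/2)$ outright---whereas the paper merely unrolls the symmetric functions as $\cos(kx)$ along one branch and back along the other and identifies them with the loop's; this is a presentational difference, not a different argument.
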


\begin{proof}
Let's call $k$-dependent functions which satisfy the $\bold{L}$-differential - \textit{k-functions}. Such $k$-functions can be chosen to be either symmetric or anti-symmetric under the exchange of branches, where branches are defined in Fig. \ref{fig:btadpools}. One way to see this is that they obey either Dirichlet or Neumann boundary conditions at the interior end vertex. If they are anti-symmetric they will not be continuous at all vertices unless $k$ is an eigenfrequency in which case they will have value zero at the boundary vertex. Such functions do not contribute to the $M$-function. Symmetric $k$-functions can be written $\textnormal{Cos}(k x)$ for some $k$. $x=0$ at one end vertex, follows one branch and returns on other branch. This is precisely the same $k$-functions that determine the $M$-function for a loop and we are done. 
\end{proof} 
We have checked that this theorem is true by computer, for extra safety.

A set of loops in a ring, as shown in Fig. \ref{fig:cringofloops} have the secular equation: 
\begin{equation}
\Sigma (k)=\left(-1+e^{i k L/2}\right)^2\left(-1+e^{L_1 i k}\right)^{n_1} \left(-1+e^{L_2 i k}\right)^{n_2} ... \left(-1+e^{L_m i k}\right)^{n_m} 
\end{equation}
and also here the secular equation tells us that the order of the loops does not matter.

\section{Generating graphs having many vertices with the same $M$-function}
It can be seen that replacing all the edges in a graph with chains of loops corresponds to taking two copies of a graph, and then attaching them to each other at every vertex. We introduce vertices of valence two on each edge to define the loops and we illustrate this in Fig. \ref{fig:cloopconstruction}. 
Let's call the initial graphs $\Gamma_1$ and $\Gamma_2$ and the resulting graph $\Gamma_1 \sqcup \Gamma_2$. We want to find the secular equation for $\Gamma_1 \sqcup \Gamma_2$ and we do this by finding all eigenfunctions. 

Any edge originates from $\Gamma_1$ or $\Gamma_2$. Let's call the set of edges that originates from $\Gamma_1$ or $\Gamma_2$ a \textit{branch} and we have two branches. Any eigenfunction of $\Gamma_1 \sqcup \Gamma_2$ can be chosen to be either symmetric or anti-symmetric under exchange of the two branches. If the eigenfunction is symmetric under exchange of the branches it can be written $f_i+g_i$ where $f_i$ and $g_i$ have support on one branch each. They will be eigenfunctions of $\Gamma_1$ and $\Gamma_2$, respectively. If the eigenfunction is anti-symmetric under exchange of branches then it can be written as a linear combination of eigenfunctions of the loops. Each loop has a set of anti-symmetric eigenfunctions with support on the loop alone. We have proven: 
\begin{theorem}
The secular equation for $\Gamma_1 \sqcup \Gamma_2=\Gamma \sqcup \Gamma$ is:
\begin{equation}
\Sigma^{\Gamma \sqcup \Gamma} (k)=\Sigma^\Gamma(k)\left(-1+e^{L_1 i k}\right)^{n_1} \left(-1+e^{L_2 i k}\right)^{n_2}  ... \left(-1+e^{L_m i k}\right)^{n_m} 
\end{equation}
where $\Sigma^\Gamma(k)$ is the secular equation of graph $\Gamma$ and $\Gamma \sqcup \Gamma$ contains $m$ sets of loops with length $L_i$, each repeated $n_i$ times. \end{theorem}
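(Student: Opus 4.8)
The plan is to exploit the $\mathbb{Z}_2$ symmetry of $\Gamma \sqcup \Gamma$ that exchanges the two branches. Let $\sigma$ be the involution sending each point of a branch-$1$ edge to the corresponding point on the matching branch-$2$ edge, and vice versa. Since the construction is manifestly symmetric between the two copies, $\sigma$ acts isometrically on $\Gamma \sqcup \Gamma$ and fixes every shared vertex, so it commutes with $\mathbf{L}(\Gamma \sqcup \Gamma)$. Consequently, for each $k$ the space of solutions of the $\mathbf{L}$-differential splits into the $\sigma$-symmetric subspace ($\sigma f = f$) and the $\sigma$-antisymmetric subspace ($\sigma f = -f$), and the secular determinant factorises into a symmetric and an antisymmetric contribution. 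I would show that these contributions are exactly $\Sigma^\Gamma(k)$ and $\prod_i(-1+e^{L_i ik})^{n_i}$, respectively.

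For the symmetric sector I would note that a $\sigma$-invariant function is determined by its restriction to a single branch, a copy of $\Gamma$. Continuity across each shared vertex is inherited from that copy, and at each shared vertex the sum of outward derivatives over all incident edges is, by symmetry, exactly twice the corresponding sum over the branch-$1$ edges alone; hence the Kirchhoff condition on $\Gamma \sqcup \Gamma$ is equivalent to the Kirchhoff condition for $\Gamma$. (The auxiliary valence-two vertices impose only smoothness and do not interfere.) This yields a bijection between $\sigma$-symmetric eigenfunctions of $\Gamma \sqcup \Gamma$ and eigenfunctions of $\mathbf{L}(\Gamma)$ preserving both $k$ and multiplicities, so the symmetric contribution to the secular equation is precisely $\Sigma^\Gamma(k)$.

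For the antisymmetric sector the key observation is that the fixed-point set of $\sigma$ is exactly the set of original (shared) vertices, so any $\sigma$-antisymmetric function must vanish at every original vertex. Such a function automatically satisfies the Kirchhoff condition there, since on each incident loop its two arc-derivatives cancel, and it therefore decouples into independent problems, one per loop. On a loop of length $L_i$, formed from the two copies of one edge, the antisymmetric condition amounts to Dirichlet conditions at the two original vertices joined by that edge; the eigenfrequencies are $k = 2\pi N/L_i$ with $N \ge 1$, which are exactly the roots of $-1 + e^{L_i ik}$, each with multiplicity one. Collecting the $n_i$ loops of length $L_i$ gives the factor $\prod_i(-1+e^{L_i ik})^{n_i}$, and multiplying the two sectors yields the claimed identity.

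The main obstacle I anticipate is the bookkeeping of multiplicities and of the special frequency $k=0$, rather than the symmetry argument itself. One must confirm that the symmetric/antisymmetric splitting is exhaustive — it is, because $\sigma^2 = \mathrm{id}$ forces every solution to decompose into its $\pm 1$ eigenparts — and that no eigenfrequency is miscounted when the two factors share a root. In particular, the constant eigenfunction lies purely in the symmetric sector and is already recorded by $\Sigma^\Gamma(k)$, so the spurious zero of $-1+e^{L_i ik}$ at $k=0$ must be reconciled with the usual conventions for the secular determinant, which is defined only up to nonvanishing factors and the customary treatment of $k=0$. Verifying this matching is where the care is needed.
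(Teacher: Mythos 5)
Your proof is correct and takes essentially the same route as the paper: both decompose solutions under the branch-exchange involution into a symmetric sector, identified with eigenfunctions of $\Gamma$ itself, and an antisymmetric sector consisting of functions vanishing at the shared vertices and supported loop by loop, whose eigenfrequencies $2\pi N/L_i$ give the factors $\left(-1+e^{L_i i k}\right)^{n_i}$. Your write-up is in fact more careful than the paper's brief argument about the Kirchhoff conditions, the multiplicity bookkeeping, and the $k=0$ convention.
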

We can see that the secular equation for a chain of loops and a ring with loops given above are of the correct form.
We can iterate this process and get pumpkin graphs instead of loops in the construction.

By replacing edges with loops we have a method to generate non-isomorphic graphs which have many hot vertices having the same $M$-function. We can also generate non-isomorphic graphs having different classes of hot vertices.
We start with a graph having a set of vertices with the same $M$-function, due to symmetry. This is illustrated in Fig. \ref{fig:cloopconstruction}. We then replace edges with chains of loops and break the symmetry. The hot vertices will still have the same $M$-functions, albeit likely different from the initial $M$-function. We then also generate several non-isomorphic graphs each having a set of hot vertices with the same $M$-function.

Having such a set of graphs we can make infinite graphs in several dimensions, where the unit cell is repeated, not isomorphically but isospectrally. This we illustrate in the two-dimensional case in Fig. \ref{fig:csuperlattice2D}. It is clear that many interesting infinite graphs can be made this way.

\section{Star graphs with pumpkin graphs as leaves}
Since pumpkin graphs seem to be important we decided to investigate pumpkin graphs for isospectrality. We found that two connected pumpkin graphs are isospectral, as long as the edges have the same length. The allocation of edges between the two constituent pumpkin graphs does not matter as long as they have at least one edge. Such a connected pumpkin graph is shown in Fig. \ref{fig:cdumbbells}a). The central vertex is always a hot vertex. Fig. \ref{fig:cdumbbells}b) shows a double loop (a special case of the graph in a)) which has the isospectral partner shown. These graphs have a hot vertex each and if we attach an interval as shown to these hot vertices we generate two isospectral graphs which are part of an isospectral triplet c). In Fig. \ref{fig:cdumbbells}d) we show three isospectral graphs which are members of the isospectral set of four, shown in Fig. \ref{fig:afirsttriplets}. Using these connected pumpkin graphs it is easy to construct sets containing many isospectral graphs having hot vertices. The secular equation of the connected pumpkin graphs is:

\begin{equation}
\Sigma (k)=\left(e^{2 i k}-1\right)^{K-1} \left(e^{2 i k}+1\right),
\end{equation}

where $K$ is the number of edges and the length of the graph is $K$. The eigenfrequencies are 0 with multiplicity one, $N \pi$ with multiplicity $K-1$ and $N \pi + \pi/2$ with multiplicity one. $N$ is a positive integer.

\begin{figure}
\centering
\includegraphics[width=1.0\textwidth]{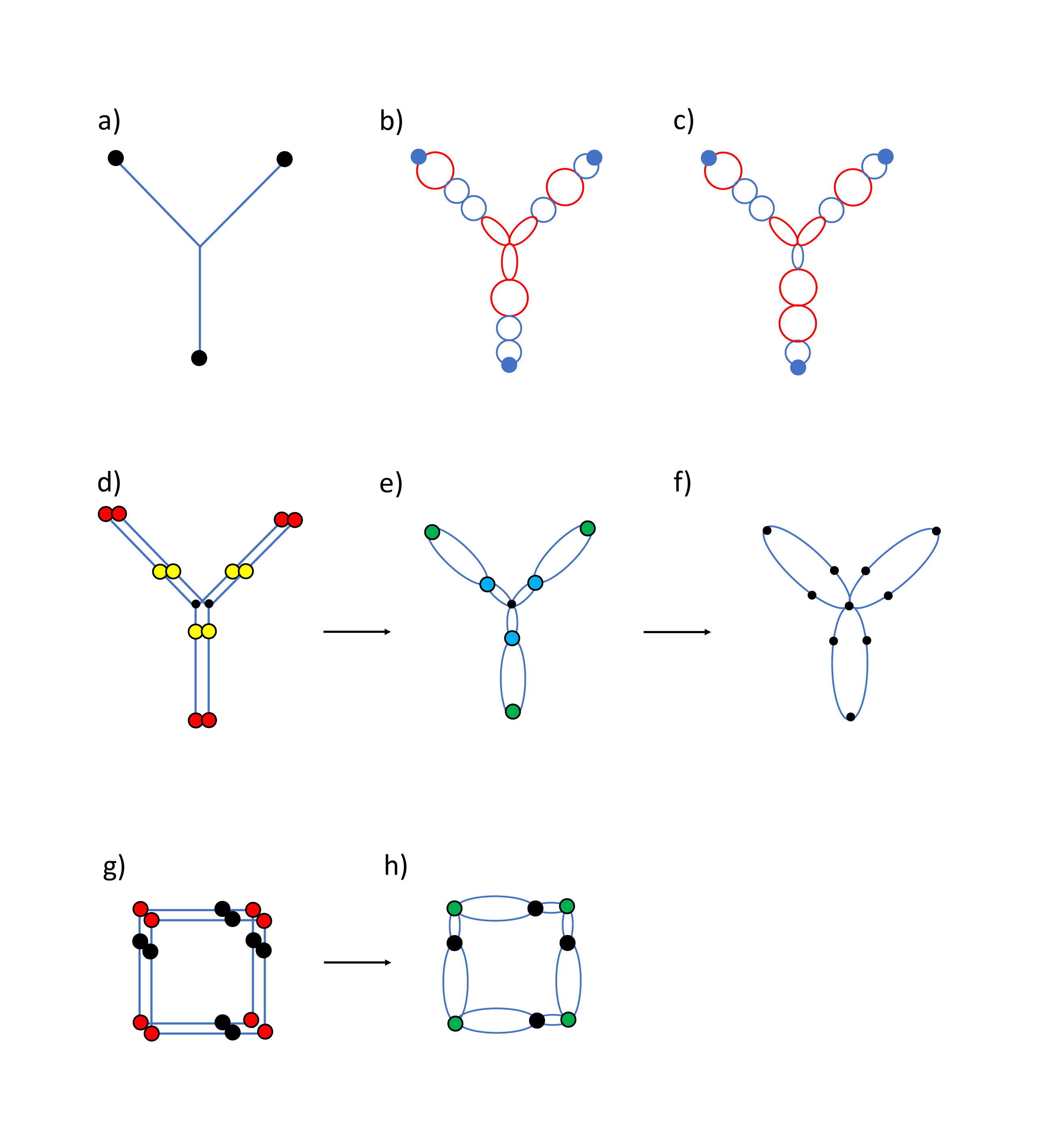}
\caption{ a) A star graph having three hot vertices. b-c) Two star graphs where the leaves consist of loops having three hot vertices each. These graphs are isospectral. d) An illustration that attaching two identical graphs to each other will create a graph where the edges are replaced with loops. The initial graphs d) have two sets of hot vertices (red and yellow), related by symmetry. e) After attaching the graphs to each other we obtain new graphs having two sets of hot vertices, blue and green. f) unrolling the loops makes it easier to visualise eigenfunctions. It is still necessary to keep the boundary conditions right. g-h) Attaching a square to itself generates a graph with four hot vertices, shown in green CHECK. Vertices with the same colour have the same $M$-function.}\label{fig:cloopconstruction}
\end{figure}

\begin{figure}
\centering
\includegraphics[width=1.0\textwidth]{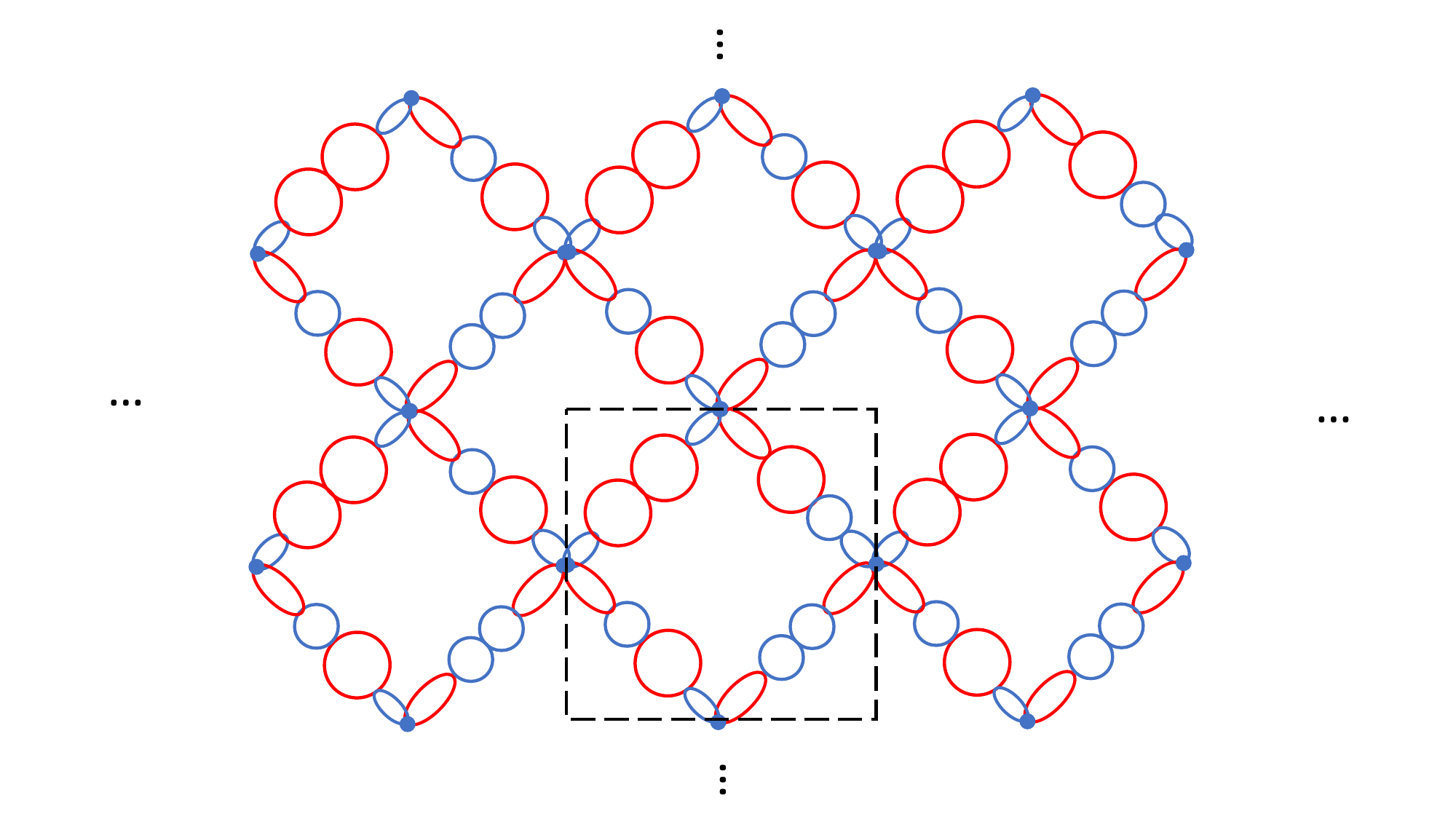}
\caption{ Part of an infinite graph. A unit cell, indicated by the black dashed box, is repeated with isospectral copies instead of isomorphic copies, forming a square lattice. All red loops have the same length and all blue loops have the same length.}
\label{fig:csuperlattice2D}
\end{figure}

\begin{figure}
\centering
\includegraphics[width=1.2\textwidth]{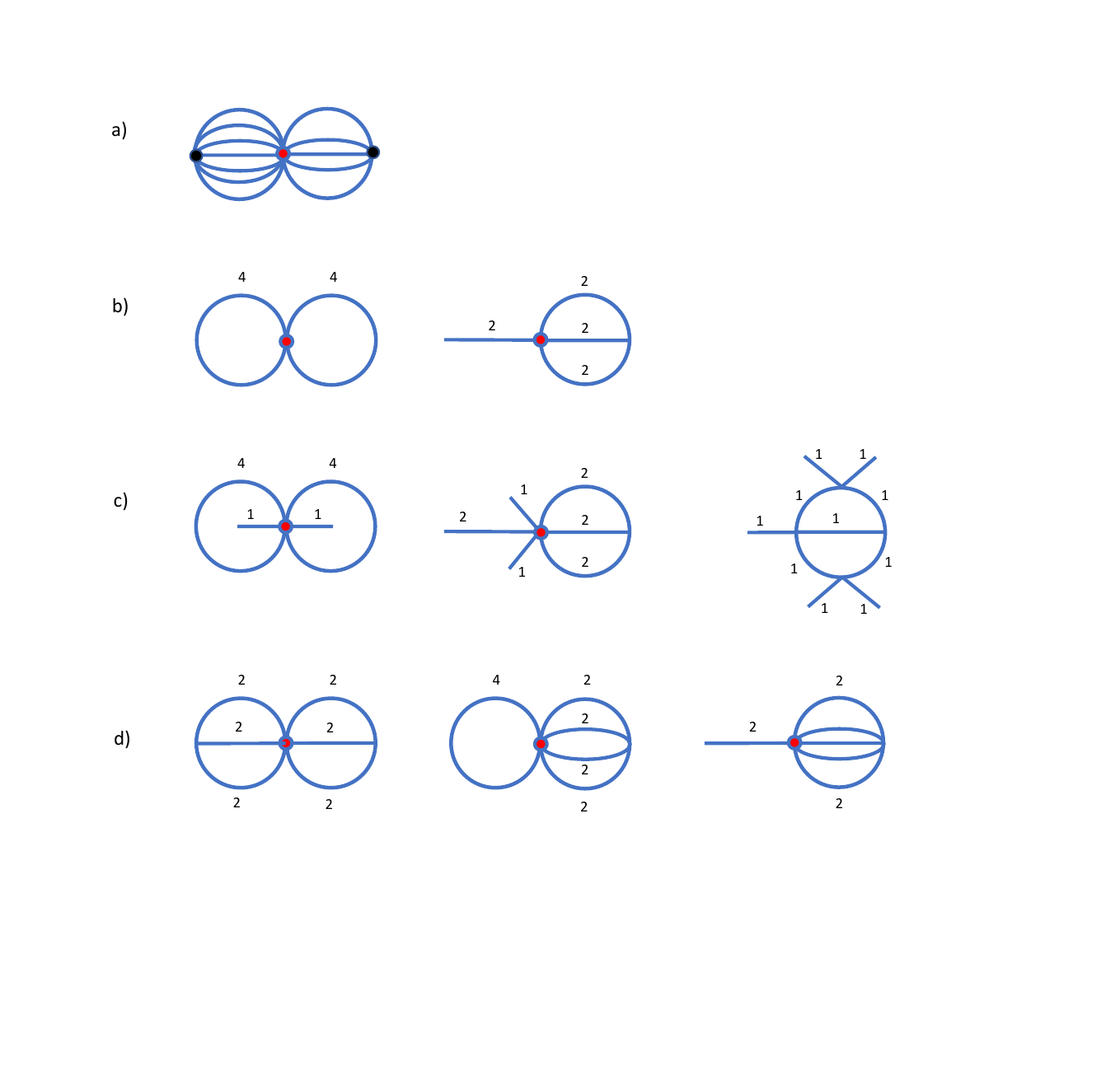}
\caption{ a) Two connected pumpkin graphs with a total of 12 edges. All such graphs with the same number of edges are isospectral. The central, red, vertex is a hot vertex. The black vertices are not hot. b) Two connected loops have an isospectral partner where one subgraph consists of an interval. We consider a loop to have two edges for consistency. c) Attaching an interval to the central vertices in b) creates two isospectral graphs which are part of the shown isospectral triplet. d) Three members of the isospectral set of four shown in Fig. \ref{fig:afirsttriplets}. They can all be seen to be two connected pumpkin graphs with six edges in total. Red vertices within each isospectral set have the same $M$-function.}
\label{fig:cdumbbells}
\end{figure}

\begin{figure}
\centering
\includegraphics[width=1.0\textwidth]{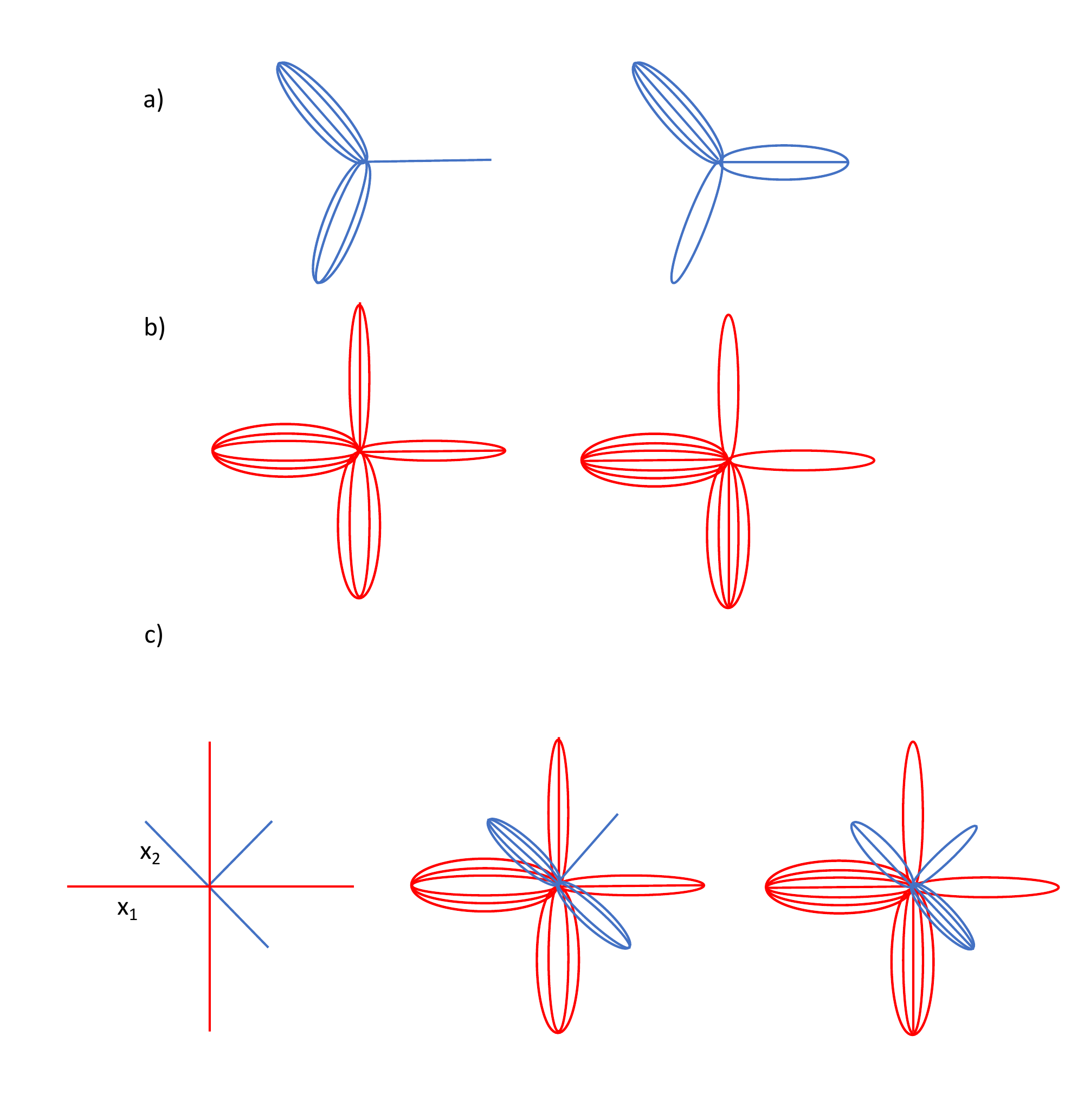}
\caption{ a) Two isospectral graphs consisting of a star graph where the leaves are pumpkin graphs. These graphs have three leaves and ten edges. b) A different example of two isospectral graphs having four leaves and 16 edges. c) A star graph having two set of leaves having different lengths, $x_1$ (red edges) and $x_2$ (blue edges). We can replace the leaves with pumpkin graphs where the edges have length $x_1$ and $x_2$. If the number of edges with length $x_1$ is, say $E_1$, and the number of edges with length $x_2$ is, say $E_2$, we will get isospectral graphs independently of their allocation between leaves. We show two such isospectral graphs in the figure.}
\label{fig:cpumpkinstar}
\end{figure}

\begin{figure}
\centering
\includegraphics[width=0.8\textwidth]{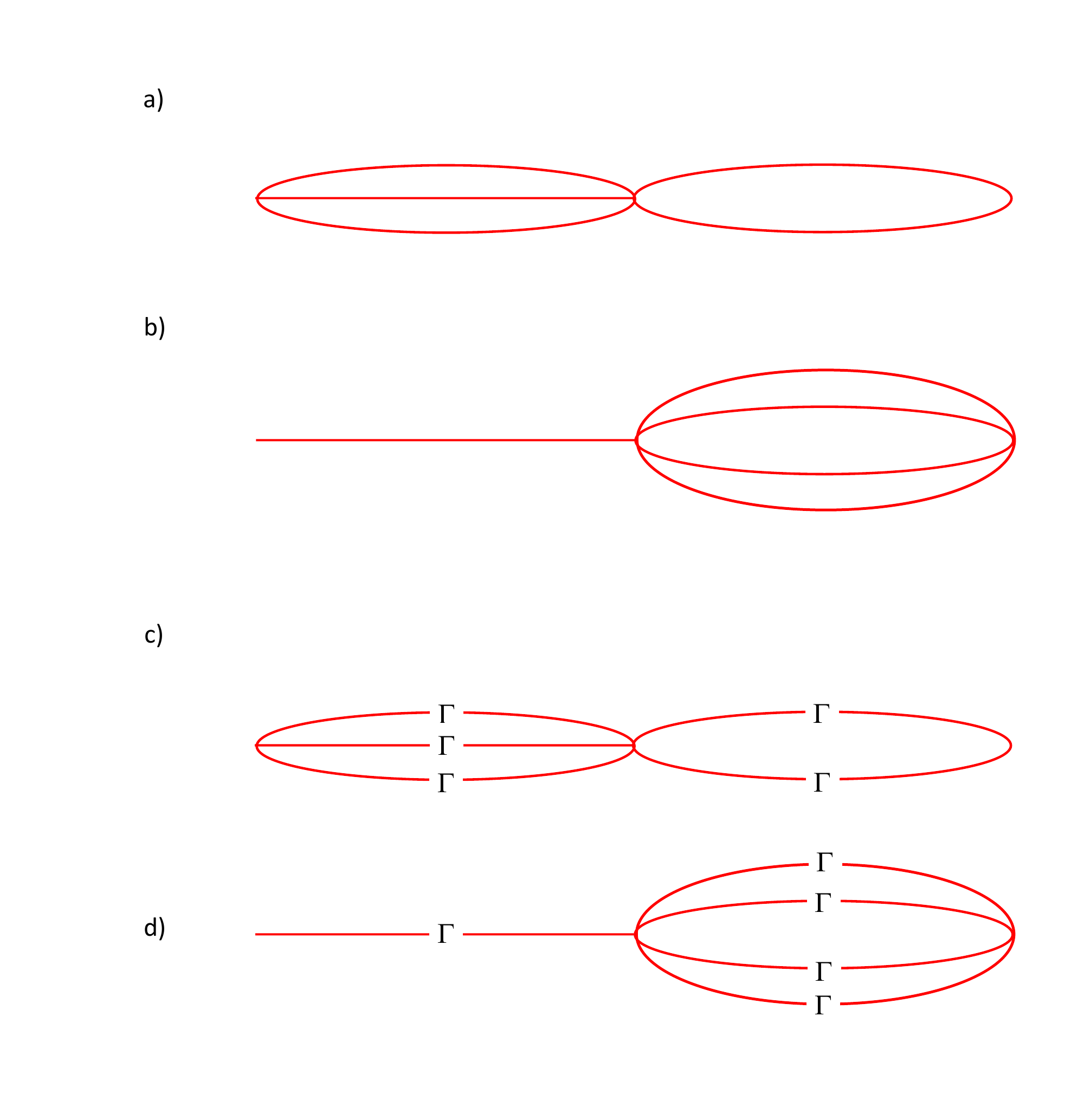}
\caption{ a-b) Two isospectral star graphs with pumpkin leaves having two leaves and a total of five edges. c-d) Replacing the edges of the graphs in a-b) with a compact graph $\Gamma$ will give these isospectral graphs.}
\label{fig:cpumpkinstargraph}
\end{figure}

We have also found that we can replace the leaves of a star graph with pumpkin graphs as shown in Fig. \ref{fig:cpumpkinstar}. Let us call such graphs - \textit{star graphs with pumpkin leaves}. If the total number of edges and leaves is constant then we have isospectral graphs provided the length of the edges is the same. In other words, we can freely allocate the edges between the pumpkin graphs while keeping the total number of edges constant. The central vertex is a hot vertex. We can also start with a star graph having two sets of leaves of different length, $x_1$ and $x_2$ and replace each leaf with a pumpkin graph having edges with the same length as the leaf. The resulting graphs will be isospectral, provided the number of edges with length $x_1$ and $x_2$ is kept invariant. Naturally we can start with star graphs having even more sets of leaves with different lengths and replace them with pumpkin graphs in the same way and generate isospectral graphs. The connected pumpkin graphs in Fig. \ref{fig:cdumbbells} are a special case of the star graphs with pumpkin leaves. 

These results are not surprising since Butler and Grout showed that star graphs with pumpkin leaves are isospectral if they have the same number of edges, with respect to the normalized Laplacian \cite{Butler2012}. It then follows that they are isospectral also as quantum graphs using the transference principle by von Below \cite{vonBelow1985}.

We also found that we can replace the edges with graphs and still get isospectral graphs as illustrated in Fig. \ref{fig:cpumpkinstargraph}. That is, we choose a compact graph $\Gamma$ and replace every edge in an isospectral pair with $\Gamma$ and the resulting graphs will be isospectral.

We find that the secular equation for star graphs with pumpkin leaves is:

\begin{equation}
\Sigma (k)=\left(e^{2 i k}-1\right)^{K-S+1} \left(e^{2 i k}+1\right)^{S-1} 
\end{equation}
where $S$ is the number of leaves, $K$ is the number of edges and the length of the graph is $K$.

The eigenfrequencies are, with $N$ a positive integer:
\begin{itemize}
\item 0 with multiplicity one.
\item $N \pi$ with multiplicity $K-S+1$. 
\item $N \pi + \pi/2$ with multiplicity $S-1$. 
\end{itemize}

The secular equation reduces to $\left(e^{2 i k}-1\right)^{K}$ for pumpkin graphs of length $K$, to $\left(e^{2 i k}-1\right) \left(e^{2 i k}+1\right)^{S-1}$ for star graphs of length $S$ and for flower graphs it becomes
$\left(e^{i k}-1\right)^{S+1} \left(e^{i k}+1\right)^{S-1}$ (where each leaf is a loop \cite{Blixt2015}) of length $2 S$. The secular equation for the star graphs with pumpkin leaves is related to the secular equation of a single edge.  
  
Also for the more complicated graphs shown in Fig. \ref{fig:cpumpkinstargraph}c-d) there is a relation between the spectrum of $\Gamma$ and the spectrum of the full graph which we have not elucidated yet. We strongly believe that if we attach each $\Gamma$ to the neighboring $\Gamma$ such that the symmetry of the pumpkins is still the cyclic group $C_n$, where $n$ is the number of $\Gamma$'s in the pumpkin, we will also get isospectral graphs. We have not tested this since the programming is quite involved.
  
\section{Generating sets with more than one type of hot vertices}
Although we can easily generate graphs having many hot vertices, it is still interesting to find more irregular graphs having different types of hot vertices. We have found a few. In Fig. \ref{fig:dmultihot} we show graphs having two different types of hot vertices. Fig. \ref{fig:dmultihot}a) shows a pair of isospectral graphs having two hot vertices each. Fig. \ref{fig:dmultihot}b) can be obtained from the graphs in Fig. \ref{fig:dmultihot}a) by attaching an interval to the two hot vertices. The resulting isospectral pair has two sets of hot vertices, indicated by red and green vertices. These sets of hot vertices are not equivalent, the green vertices have one $M$-function and the red vertices have another $M$-function. We can attach a compact graph to a red vertex on graph c) and to a red vertex on graph d) and we get isospectral graphs. However if we attach a compact graph to a red vertex on graph c) and to a green vertex on graph d) the resulting graphs will in general not be isospectral. In Fig. \ref{fig:dmultihot}e-f) we show another set of isospectral graphs having two inequivalent sets of hot vertices. Fig. \ref{fig:dmultihot}g) shows an isospectral triplet having three sets of inequivalent hot vertices.\\
Fig. \ref{fig:dmultihottriplets} shows yet another example of two isospectral triplets having two sets of inequivalent hot vertices. One triplet has eight vertices and one has ten vertices. These two triplets are clearly related to each other. We did not find any isospectral triplet having nine vertices which is similar to these two triplets.

We then decided to investigate graphs containing many loops and found some examples of isospectral sets with very many different classes of hot vertices. The set of four isospectral graphs in Fig. \ref{fig:dhotvertices}d) has 21 sets of vertices with the same $M$-function within each set. We have only plotted the two sets that have a vertex on every graph. The isospectral set of two graphs in Fig. \ref{fig:dhotvertices}c) has four sets of vertices each with their unique $M$-function. In Fig. \ref{fig:dhotvertices}a-b) we show different behaviours of hot vertices and one graph has none. These graphs are not isospectral.

\subsection{Generating graphs and isospectral sets having many hot vertices by attachments}
We have empirically found a way to generate graphs having many vertices with the same $M$-function by attachments. This we will exemplify using the graphs in Fig. \ref{fig:dhotvertices}d). If we attach all four graphs to each other at their green vertices, we will have a new large graph that has four hot vertices and we can generate twelve such graphs which are all isospectral to each other. These vertices originate from the red vertices in Fig. \ref{fig:dhotvertices}d). This behaviour will also hold for other graphs but we have very few examples of graphs with many hot vertices. It is possible to use the graphs constructed using loops described in Section 10, but we want graphs without any symmetry.

\begin{figure}
\centering
\includegraphics[width=1.0\textwidth]{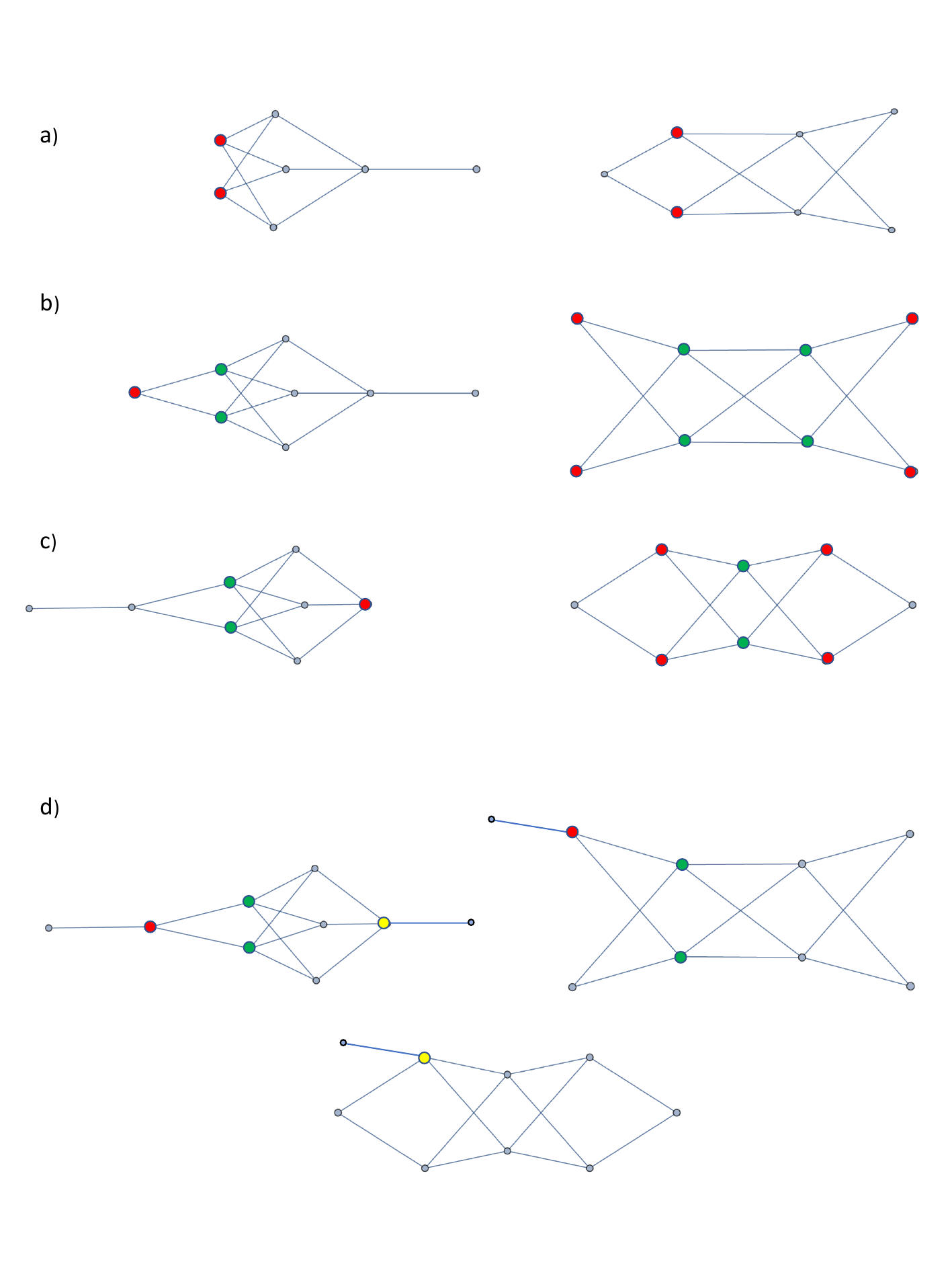}
\caption{ a-b) Two isospectral graphs having two hot vertices each, indicated in red. c-d) Two isospectral graphs having two sets of hot vertices each, indicated by red and green. e-f) Two isospectral graphs having two sets of hot vertices each, indicated by red and green. g) An isospectral triplet, having three sets of inequivalent hot vertices.}
\label{fig:dmultihot}
\end{figure}

\section{Interesting isospectral pairs}
We have found some isospectral pairs that we think are interesting, which we show in Fig. \ref{fig:dinterestingones}. This includes pairs involving asymmetric star graphs. We have not found a way to generalise the graphs shown in Fig. \ref{fig:dinterestingones}a-b). Fig. \ref{fig:dinterestingones}c) shows an isospectral pair without pendant edges. Fig. \ref{fig:dinterestingones}d) shows that a graph with only triangles can have an isospectral partner. Fig. \ref{fig:dinterestingones}e) shows a planar isospectral pair without pendant edges. Since Neumann boundary conditions are difficult to realize at terminal vertices in real physical systems we wanted to find planar graphs without pendant edges for future experiments. We also wanted both them to be planar since we suspect three-dimensionality will affect the spectrum. It is possible that flutes made out of these graphs would sound the same. Numerical studies unfortunately show that isospectral quantum graphs made of silver do not have the same vibrational spectrum even if they both can be embedded in the plane \cite{martin}.

\begin{figure}
\centering
\includegraphics[width=0.9\textwidth]{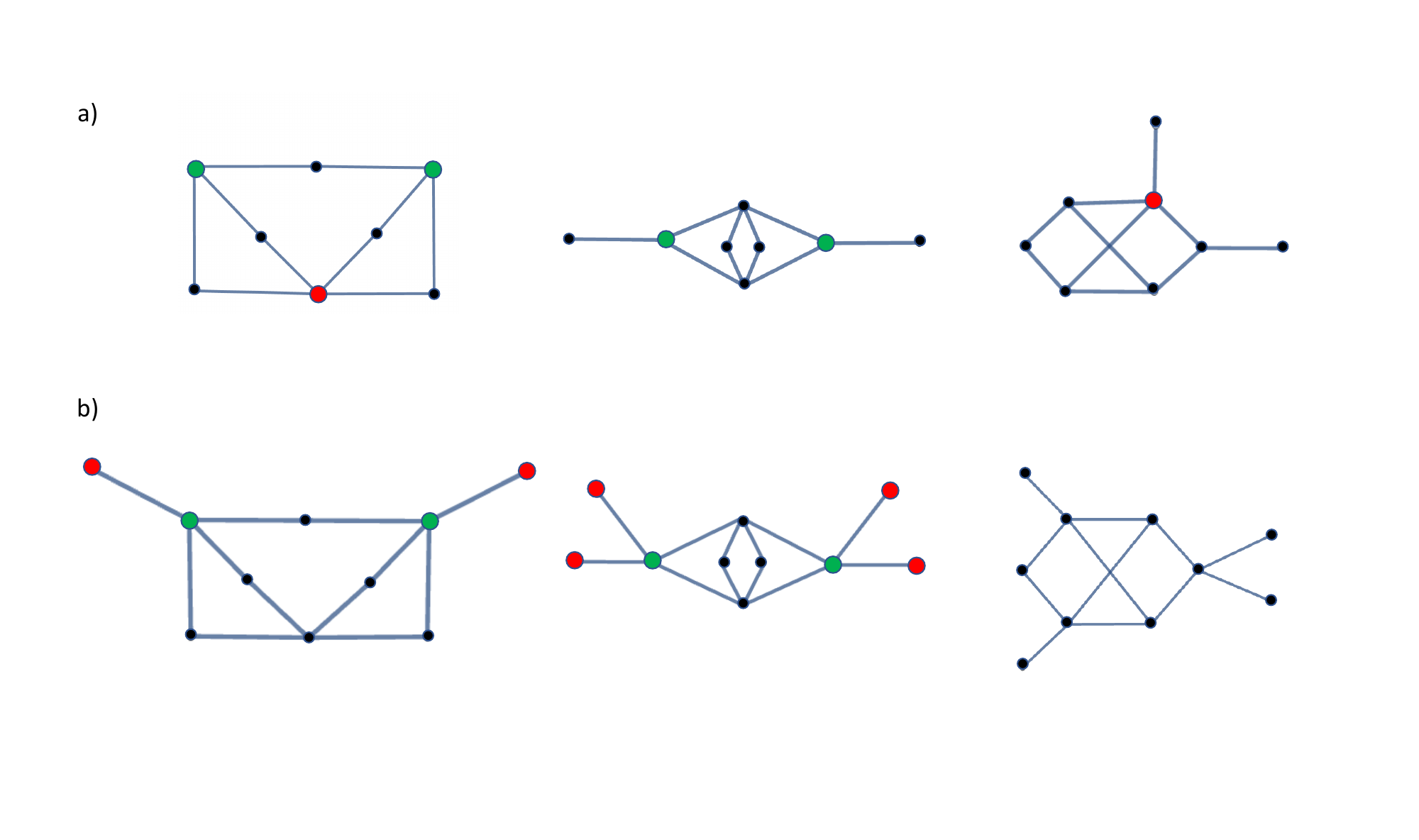}
\caption{ a) An isospectral triplet which has two sets of hot vertices, indicated by green and red vertices. b) Another isospectral triplet, closely related to the one in a). Also this triplet has two sets of hot vertices.}
\label{fig:dmultihottriplets}
\end{figure}

\begin{figure}
\centering
\includegraphics[width=0.9\textwidth]{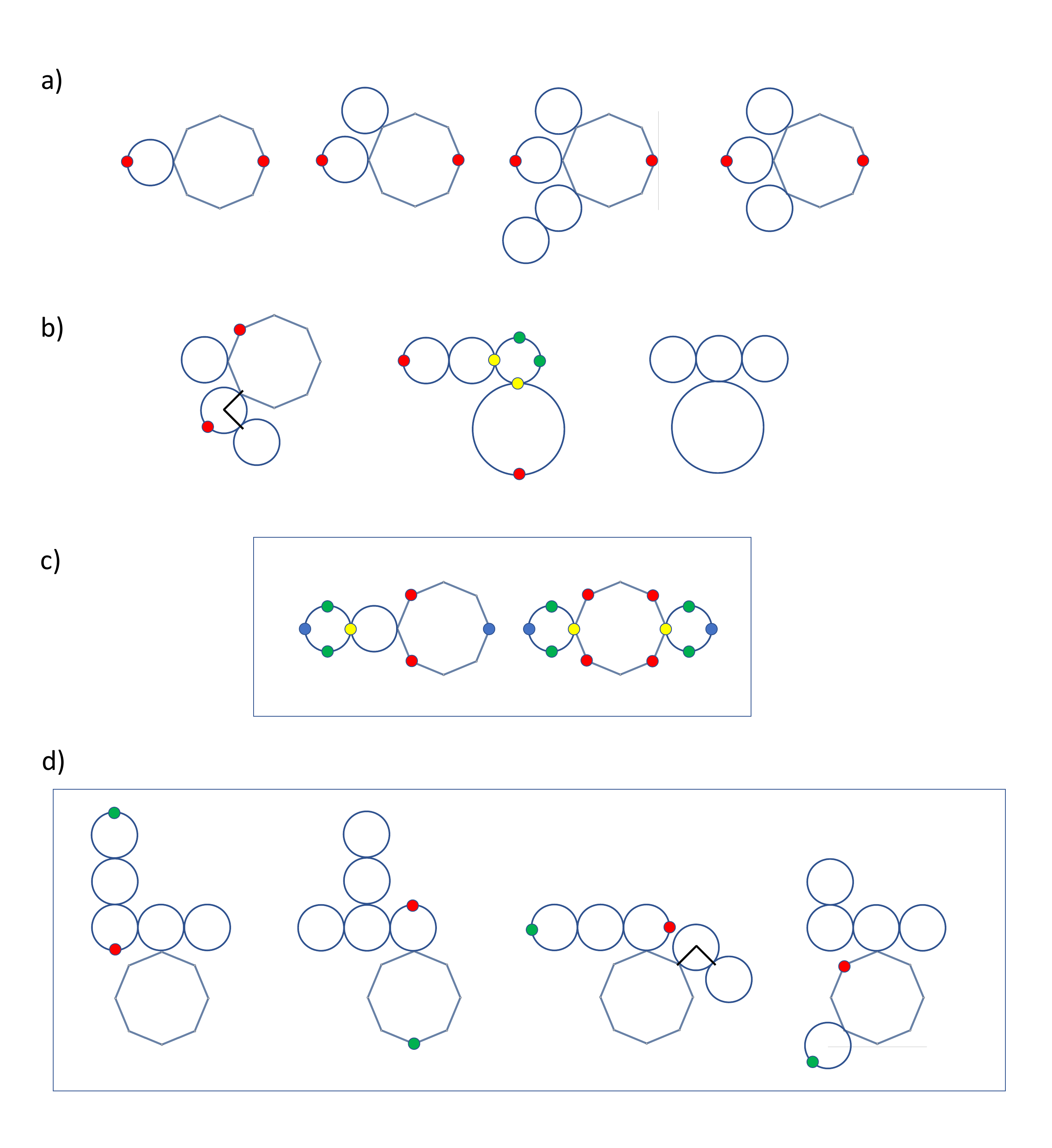}
\caption{ a-b) Individual graphs with their hot vertices. c) an isospectral pair and d) an isospectral set of four. In d) we only show hot vertices that exist on all graphs. There are 19 more sets of hot vertices that exist on two or three graphs. All vertices with the same colour have the same $M$-function if they belong to one graph or a set of isospectral graphs. Large loops have length eight and small loops have length four. We have not plotted hot vertices which are related by obvious symmetries.}
\label{fig:dhotvertices}
\end{figure}

\begin{figure}
\centering
\includegraphics[width=0.70\textwidth]{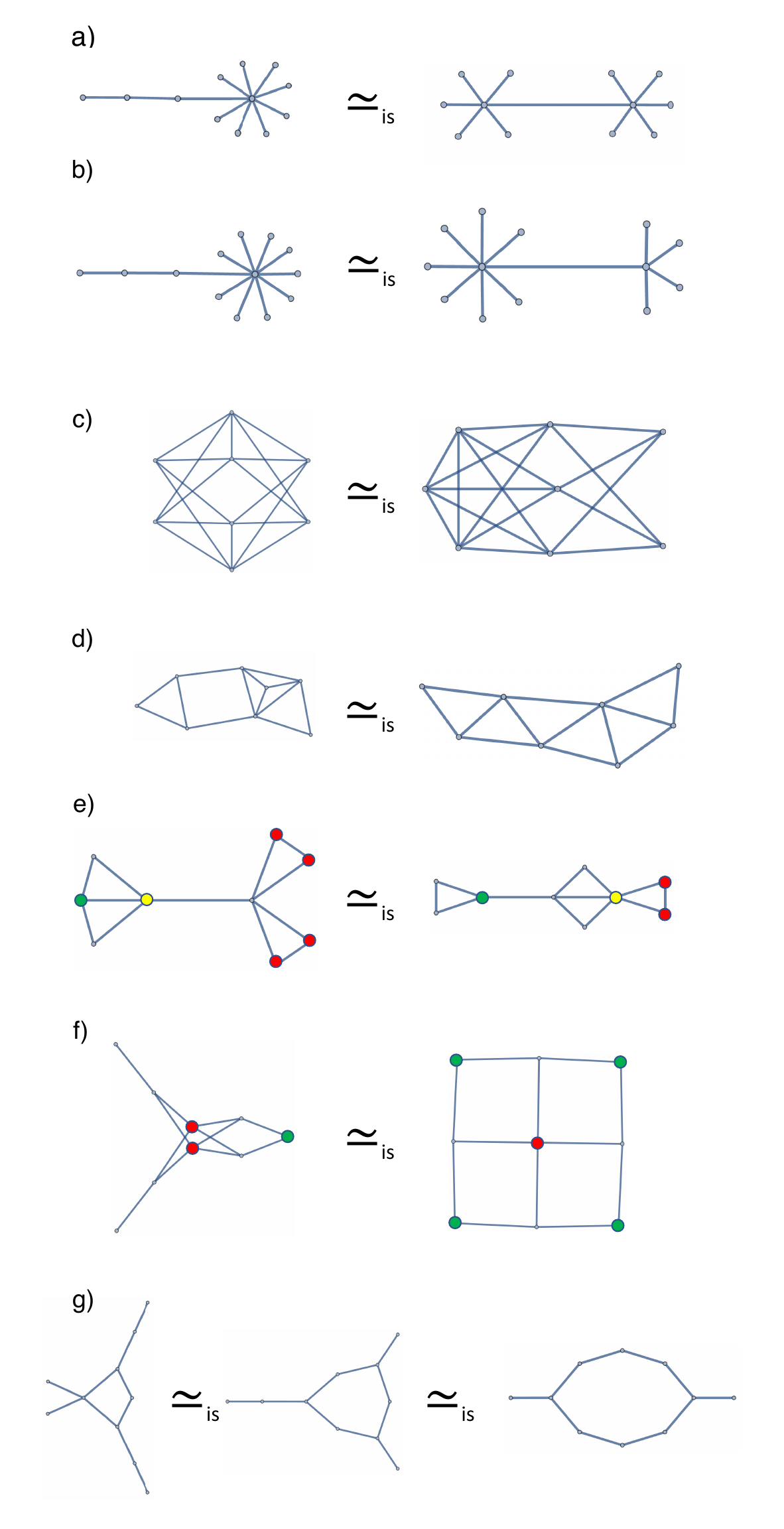}
\caption{ a) An isospectral pair where one member is a star graph with 9 leaves and the isospectral partner is a symmetric dumbbell graph. b) An isospectral pair where one member is a star graph with ten leaves and the partner is an asymmetric dumbbell graph. c) A complicated isospectral pair. d) An isospectral pair where one member forms a triangulation. e) An isospectral pair without pendant edges that can be embedded in the plane. f) An isospectral pair where one member forms a square grid. g) An isospectral triplet consisting of decorated loops. Vertices with the same colour have the same $M$-function within each isospectral pair. Some graphs do not have hot vertices.}
\label{fig:dinterestingones}
\end{figure}

\begin{figure}
\centering
\includegraphics[width=0.65\textwidth]{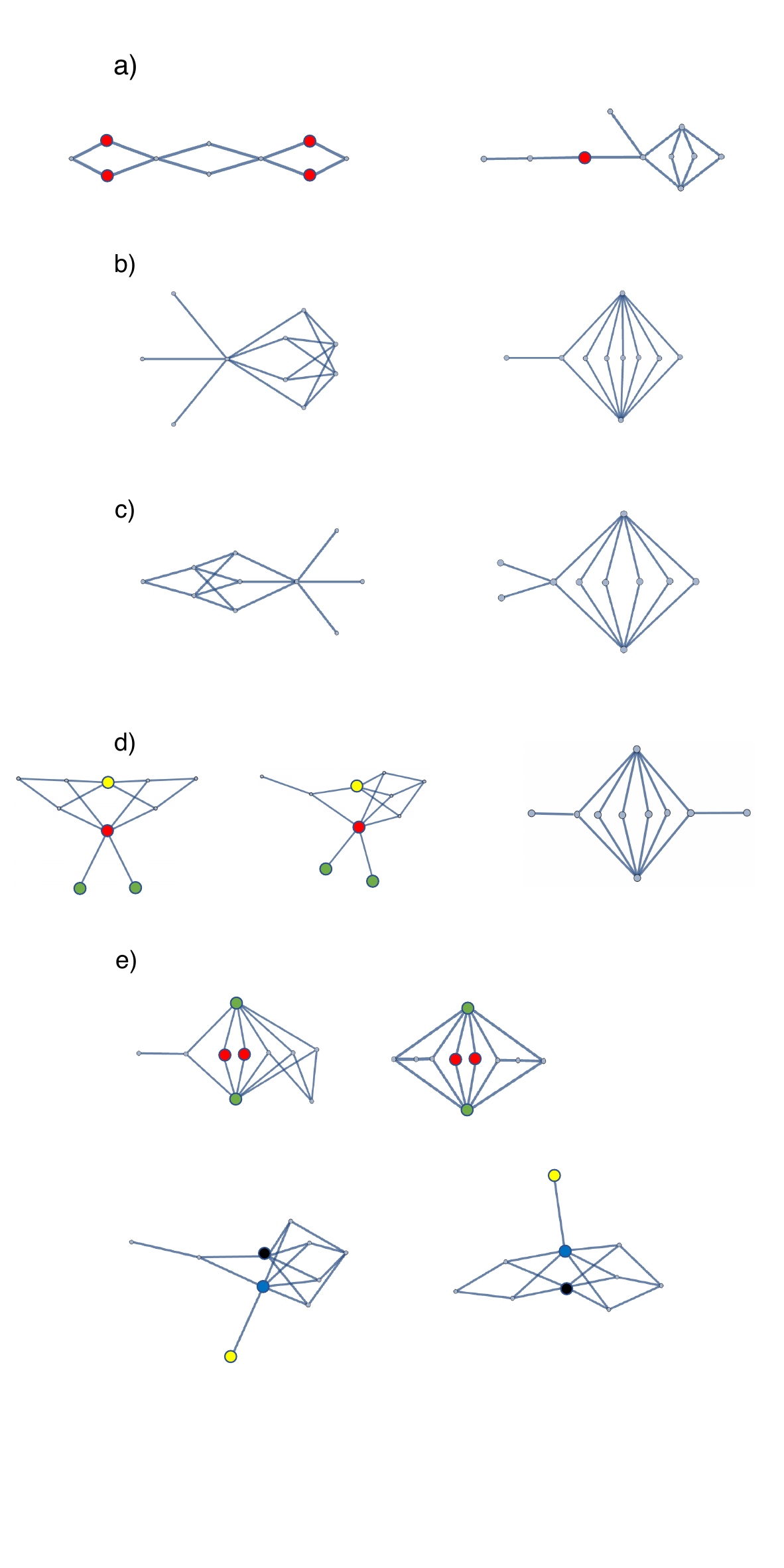}
\caption{ Examples of isospectral sets where the graphs have ten vertices. All the sets contain graphs which are related to pumpkin graphs, such as being decorated pumpkin graphs. a) An isospectral pair where one member consists of three connected loops. b-c) Two isospectral pairs where one member is a lightly decorated pumpkin graph. d) An isospectral triplet. e) A set of four isospectral graphs. All vertex sets with the same $M$-function are shown with the same colour. All vertices on all graphs were tested with respect to their $M$-functions.}
\label{fig:dtenvertices}
\end{figure}

\begin{figure}
\centering
\includegraphics[width=0.75\textwidth]{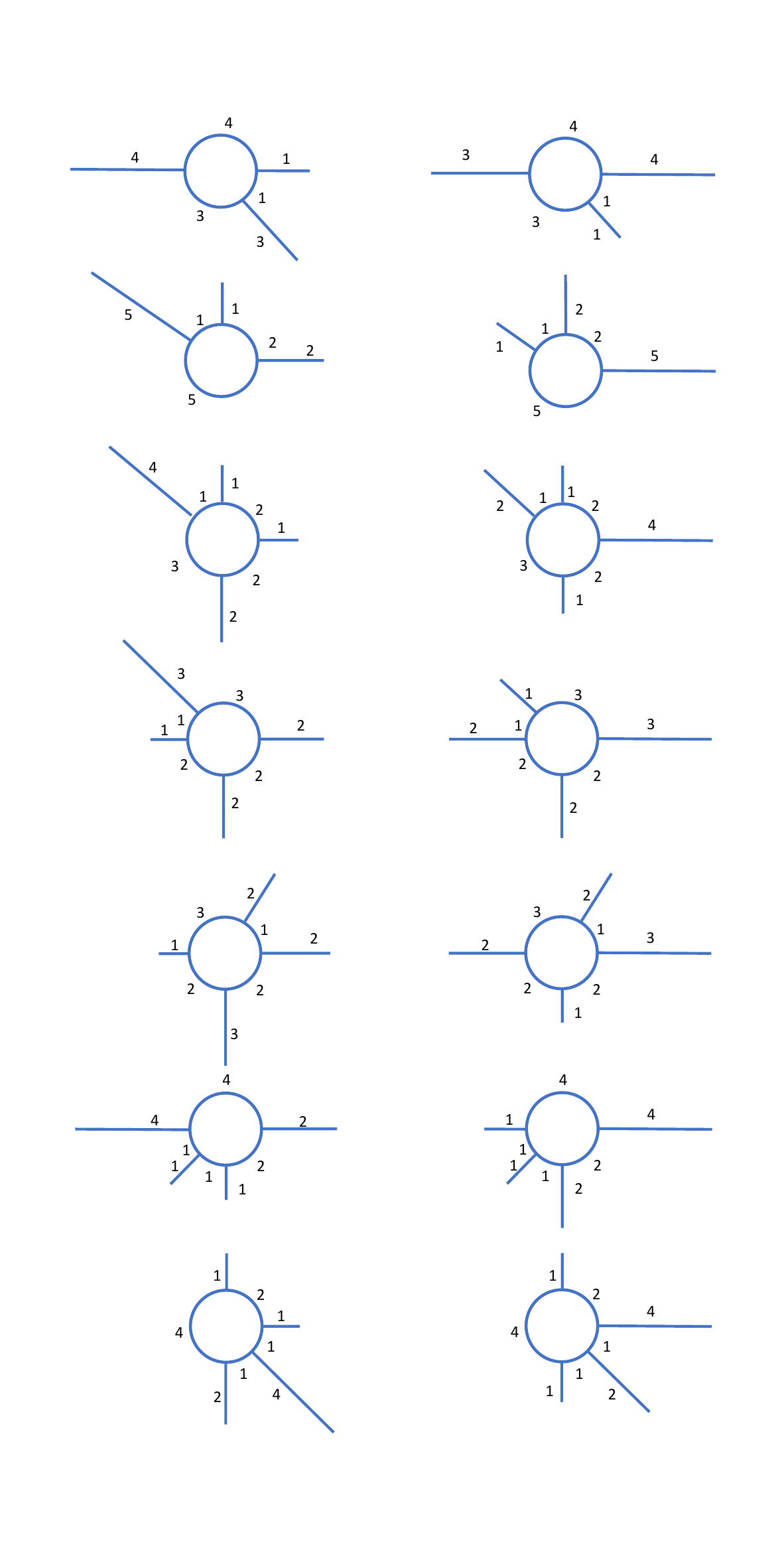}
\caption{ Examples of isospectral pairs consisting of loops decorated by edges. These graphs do not have any hot vertices, seen as equilateral graphs with 16 vertices.}
\label{fig:ddecoratedloops}
\end{figure}

\begin{figure}
\centering
\includegraphics[width=0.8\textwidth]{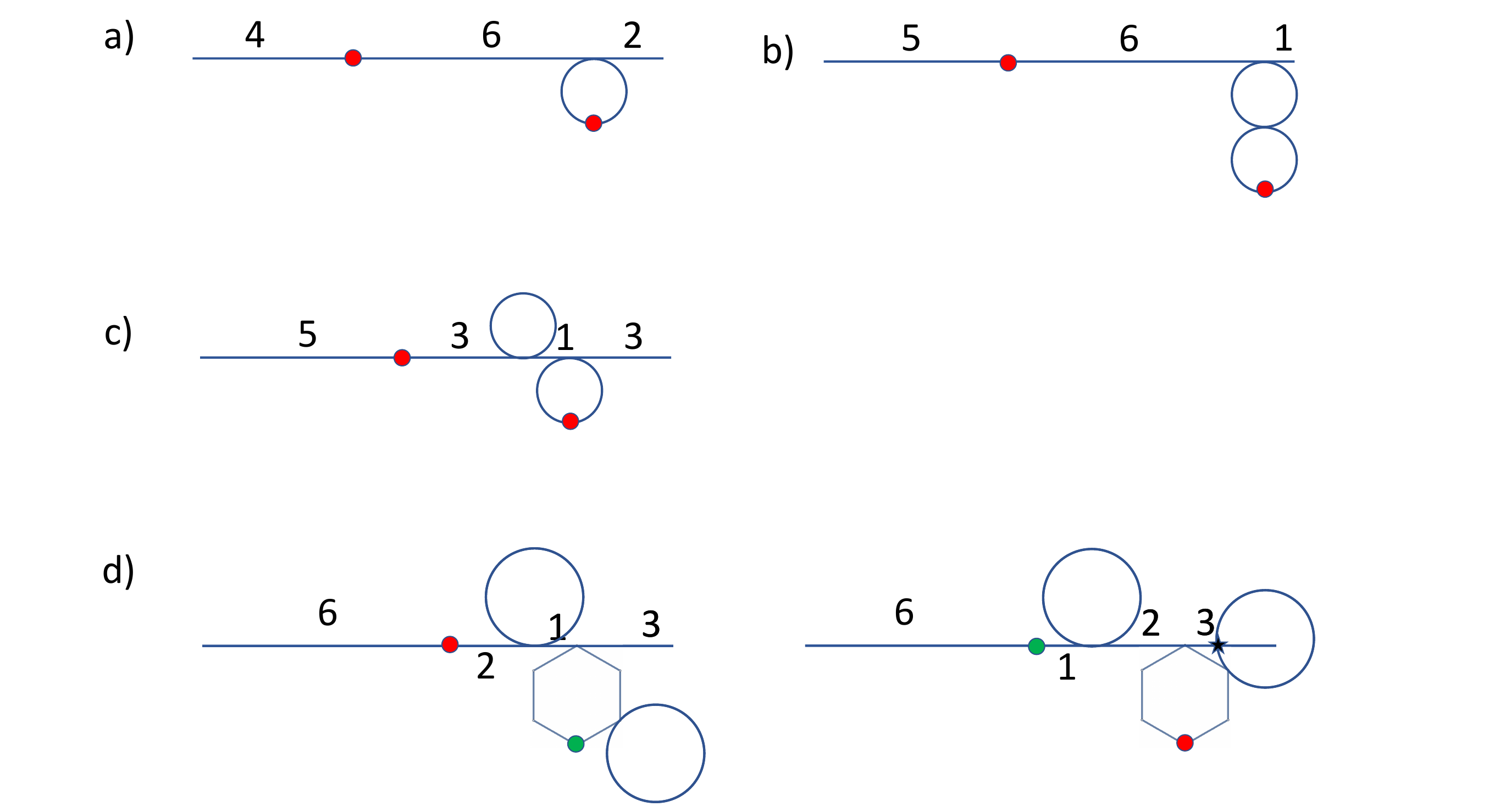}
\caption{a-c) Graphs with vertices having the same $M$-function indicated by red. d) An isospectral pair having two sets of vertices with the same $M$-function. Small loops have length four and large loops length six. The black star in d) indicates that there is no common vertex at this position.}
\label{fig:eintervalwithloops}
\end{figure}

\begin{figure}
\centering
\includegraphics[width=1.0\textwidth]{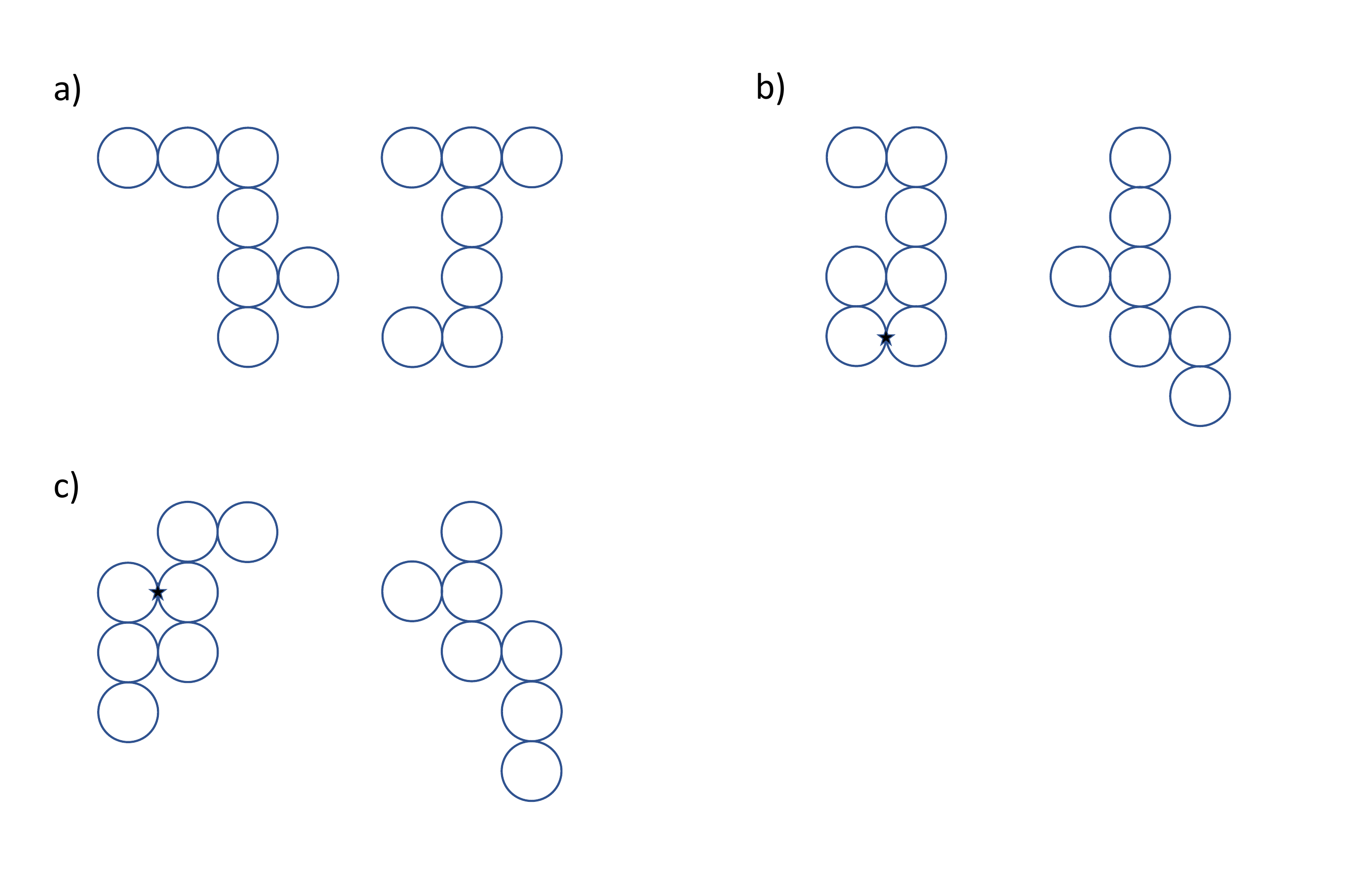}
\caption{ Three examples of isospectral pairs consisting of loops attached to loops. The loops have all the same length. A black star indicates that the two adjacent loops do not have a common vertex at this position. These graphs have no hot vertices seen as equilateral graphs with loops of length four.}
\label{fig:esamesizedloops}
\end{figure}

In Fig. \ref{fig:dtenvertices} we show examples of isospectral graphs having ten vertices. We here find that a graph consisting of three loops of equal length has an isospectral partner without much symmetry. Previously we have shown that single and double loops have isospectral partners. It is possible that chains of $n$ loops of equal length have isospectral partners for all $n$ where the isospectral partner has little or no symmetry. That is, the isospectral partner is not the simple substitution of an end loop with its isospectral partner (Fig. \ref{fig:aloop}). 

We also find one set of four isospectral graphs which does not seem to have much structure, in contrast to the set of four shown in Fig. \ref{fig:afirsttriplets}. 

The isospectral set in Fig. \ref{fig:dinterestingones}g) inspired us to find more examples of simple isospectral pairs involving loops. We thus generated loops decorated with edges having at most 16 vertices where we found several isospectral pairs. In Fig. \ref{fig:ddecoratedloops} we show examples of loops decorated with edges, such that any vertex has maximum valency three. 

Decorating loops with edges gives interesting isospectral pairs and we then decided to decorate an interval with loops. We then found that the resulting isospectral pairs could be obtained by decorating the graphs in Fig. \ref{fig:eintervalwithloops} with loops, and Fig. \ref{fig:eintervalwithloops} shows the pairs of vertices which have the same $M$-functions. Unfortunately it is computationally difficult to generate large sets of these fascinating isospectral pairs. We then generated loops with the same length attached to each other and Fig. \ref{fig:esamesizedloops} shows isospectral pairs generated in this way. The graphs in Fig. \ref{fig:esamesizedloops} do not have any two vertices with the same $M$-function. We only checked the vertices if the graphs are seen as equilateral graphs where each loop has length four.

There are many more interesting examples of isospectral sets, and what is interesting depends on the reader.\\
\clearpage

\section{Dirichlet and more general, $\delta$-type, boundary conditions.}

So far we have investigated quantum graphs having standard and Neumann boundary conditions. However there is considerable interest in quantum graphs having other types of boundary conditions such as Dirichlet boundary conditions at terminal vertices and other boundary conditions elsewhere. Dirichlet boundary conditions at vertices with higher valency than one is not particularly interesting since the relevant edge then essentially decouples as pointed out by e. g. Berkolaiko \cite{berkolaiko2017elementary} and we will only consider Dirichlet boundary conditions at terminal vertices, not elsewhere. The situation with Dirichlet boundary conditions at terminal vertices and standard otherwise corresponds well to quantum mechanics where the wave function is indeed zero at the boundary of a finite system, \cite{Messiah}. Nanowires can be constructed and the electronic spectrum can be investigated by optical means, as one example of a quite accessible fairly one-dimensional quantum mechanical system \cite{Vainorius.2016}.

There are more general $\delta(\alpha)$-type boundary conditions:

\begin{subequations} \label{eq:delta}
\begin{empheq}[left=\empheqlbrace]{align}
\displaystyle  f(x_i)=f(x_j)&=f_0,   \qquad              x_i, x_j \in V_m, \label{eq:delta1} \\[3mm]  
\displaystyle\sum_{x_i \in V_m}\partial_{n}f(x_i)&=\alpha f_0 \label{eq:delta2}
\end{empheq}
\end{subequations}

%\begin{equation} \label{delta}
%\left\{
%\begin{array}{ll}
%\displaystyle  f(x_i)=f(x_j)=f_0, & x_i, x_j \in V_m, \\[3mm]
%\displaystyle \sum_{x_i{ \in V_m} }\partial_{n}f(x_i)=\alpha f_0 &
%\end{array} \right. 
%\end{equation}

which depend on a parameter $\alpha$ where we use the same notation as in Eqn. 1. If $\alpha = 0$ we have the standard boundary conditions. The connection between $\alpha$ and the vertex scattering amplitudes has been given by e. g. by Band and Gnutzmann \cite{bandgnutzmann} and is given in Table \ref{tab:Scatteringamplitudes}.

We also have "dual" $\delta'_s(\beta)$-type boundary conditions:
 
\begin{subequations} \label{eq:delta'_s}
\begin{empheq}[left=\empheqlbrace]{align}
\displaystyle  \partial_{n}f(x_i)=\partial_{n}f(x_j)&=\partial_{n}f_0,   \qquad x_i, x_j \in V_m, \label{eq:delta'_s1}\\[3mm]
\displaystyle \sum_{x_i{ \in V_m} }f(x_i)&=\beta \:  \partial_{n}f_0 \label{eq:delta'_s2}
\end{empheq}
\end{subequations}
 
which depend on a parameter $\beta$. Such boundary conditions have been investigated by Exner and Turek \cite{ExnerTurek} and the vertex scattering amplitudes are given in \cite{ExnerPRL,ExnerSeba}.

Another set of boundary conditions are the $\delta'(\gamma)$-type boundary conditions:

\begin{subequations} \label{eq:delta'}
\begin{empheq}[left=\empheqlbrace]{align}
\displaystyle  f(x_i)-f(x_j) &=\gamma (\partial_{n}f(x_i)-\partial_{n}f(x_j)), \qquad x_i, x_j \in V_m, \label{eq:delta'1}\\[3mm]
\displaystyle \sum_{x_i{ \in V_m} }\partial_{n}f(x_i) &=0  \label{eq:delta'2}
\end{empheq}
\end{subequations}

once again with vertex scattering amplitudes given in \cite{ExnerPRL,ExnerSeba}. 

Also for these boundary conditions there is a dual, $\delta_p(\epsilon)$-type:

\begin{subequations} \label{eq:delta_p}
\begin{empheq}[left=\empheqlbrace]{align}
\displaystyle  \partial_{n}f(x_i)-\partial_{n}f(x_j)&=\epsilon (f(x_i)-f(x_j)),  \qquad x_i, x_j \in V_m, \label{eq:delta_p1}\\[3mm]
\displaystyle \sum_{x_i{ \in V_m} }f(x_i)&=0   \label{eq:delta_p2}
\end{empheq}
\end{subequations}

\begin{table}
\caption{\label{Table 2:all} Vertex scattering amplitudes for different boundary conditions. $d$ is the valency of the vertex.}
    \begin{tabular}{lll} 
     &           &             \\ \hline 
    Boundary condition        & Backscattering amplitude                                                  & Forward scattering amplitude                                            \\ \hline
      &           &             \\
    $\delta(\alpha)$  & $ \frac{2 k}{d k+i \alpha} -1$ & $ \frac{2 k}{d k+i \alpha} $ \\
      &           &             \\
    $\delta'_s(\beta)$ & $\frac{2}{\beta i k - d} + 1$                     & $\frac{2}{\beta i k - d}$                       \\ 
      &           &             \\ 
      $\delta'(\gamma)$ & $ \frac{\gamma i k + 1}{\gamma i k - 1} +\frac{2 }{d (1- \gamma i k)} $                     & $\frac{2}{d (1- \gamma i k)}$                       \\ 
      &           &             \\ 
      $\delta_p(\epsilon)$ & $ \frac{ k-i \epsilon }{ k+i \epsilon }-\frac{2 k}{d (k+i \epsilon)}$                     & $-\frac{2 k}{d (k+i \epsilon)}$                       \\ 
      &           &             \\
      $\delta(0)$, standard                   & $ \frac{2}{d} -1$     \newline                                    & $ \frac{2}{d} $                                          \\ 
      &           &             \\
      $\delta'_s(0)$ & $-\frac{2}{d} + 1$                     & $-\frac{2}{d}$                       \\ 
      &           &             \\ 
       $\delta'(0)$ & $\frac{2 }{d } -1$                     & $\frac{2}{d}$                       \\ 
      &           &             \\ 
      $\delta_p(0)$ & $ -\frac{2}{d }+1$                     & $-\frac{2}{d}$                       \\ 
      &           &             \\ \hline
    \end{tabular}
    \label{tab:Scatteringamplitudes}
\end{table}

We have implemented these boundary conditions (Dirichlet or Neumann at terminal vertices and $\delta$-, $\delta'_s$-, $\delta'$ or $\delta_p$-type otherwise) in one of our programs. The program uses the method of edge and vertex scattering matrices \cite{berkolaiko2017elementary}, and has been tested against the few known results in literature. In a few cases we calculated the secular determinant by hand. However the testing is far less than for the two programs that only implement standard and Neumann bondary conditions. Thus we urge the reader to be cautious and doublecheck our results before relying on them. 
We need some notation in the following:

\begin{itemize}
\item $\delta(\alpha)$: All vertices with valence greater than two have $\delta$-type boundary conditions with parameter $\alpha$. 
\item $\delta$-type: All vertices with valence greater than two have $\delta$-type boundary conditions. 
\item $\delta_n(\alpha)$: We have $\delta$-type boundary conditions at all vertices with valency $n$, with parameter $\alpha$. 
\item $\delta_{n_1,n_2,n_3, ...}(\alpha_1,\alpha_2,\alpha_3, ...)$: We have $\delta$-type boundary conditions at all vertices with valency $n_i$, and $\alpha_i$ is the particular parameter used for these vertices. \item $\delta_{v_1, v_2, ...}(\alpha_1, \alpha_2, ...)$: We have $\delta$-type boundary conditions at vertices $v_i$, and $\alpha_i$ is the particular parameter used for these vertices. \\
This situation does not occur often in this paper. Our investigations pertain, with few exceptions, to the situation where all vertices with the same valency have the same boundary conditions. This is a severe limitation, but has been hard enough to implement in software. 
\end{itemize}

Naturally we can also use $\delta'_s$, $\delta'$ or $\delta_p$ in the above. All vertices that are not listed above have standard boundary conditions unless otherwise noted. Terminal vertices are specified separately if they have Dirichlet boundary conditions and the default is Neumann boundary conditions. We usually do not introduce any boundary conditions at vertices with valence two and they can be removed from the graphs. \\

\subsection{Eigenvalue zero}

There are cases where the Laplacian on graphs, $\bf{L}$, have eigenvalue zero. Consider connected graphs with standard boundary conditions, $\delta(0)$, at interior (non-terminal) vertices. If the boundary conditions at terminal vertices are all Neumann, then zero is an eigenvalue of $\bf{L}$ with spectral multiplicity one. If at least one terminal vertex has Dirichlet boundary conditions then zero is not an eigenvalue. See Ref. \cite{kurasov2005inverse} for a proof that the eigenfunction corresponding to the zero eigenvalue is a constant if the graph is connected. The situation is more complicated for more general boundary conditions.\\ 

Let's investigate general connected equilateral graphs with permutation invariant boundary conditions, Eqns. \eqref{eq:delta} - \eqref{eq:delta_p}. We make the coordinate systems more symmetric by shifting the zero on each edge to the center of the edge. The eigenfunctions corresponding to eigenvalue zero can be written $2 a x + b$. The domain of the function is $[-1/2,1/2]$, the range is $[-a +b,a+b]$, and it has normal derivatives $2 a$ and $-2 a$ at its left and right endpoints, respectively.
Let us label the vertices $V_i$ and the edges $e_{ij}$. The edge $e_{ij}$ connects vertices $V_i$ and $V_j$. Let the eigenfunction on $e_{ij}$ be $f_{ij}(x)=2a_{ij}x+b_{ij}$, where $i<j$. That is, $x$ increases from the vertex with the lowest index, $i$, to the vertex with the highest index, $j$. We define $a_{ji}=a_{ij}$ and $b_{ji}=b_{ij}$, in order to simplify the discussion below.\\ \\
$\delta(\alpha)$ boundary conditions, Eqn. \eqref{eq:delta}, imply      

\begin{subequations} \label{eq:deltaalpha}
\begin{empheq}[left=\empheqlbrace]{align}
\displaystyle  - \epsilon_{ij}a_{ij}+b_{ij}&=-\epsilon_{ik}a_{ik}+b_{ik}  \\[3mm]
\displaystyle \sum_{j} \epsilon_{ij}2 a_{ij}&=\alpha (-\epsilon_{ik}a_{ik}+b_{ik})  & \label{eq:deltaalphaderivative}
\end{empheq}
\end{subequations}

at a non-terminal vertex $V_i$, where $j$ and $k$ labels vertices connected to $V_i$. In Eq. \eqref{eq:deltaalphaderivative} $k$ can be the label of any vertex connected to $V_i$.  $\epsilon_{ij}$ is the signature of $i$ and $j$, which is one if $i<j$, and minus one if not. We have val($V_i$) equations for each vertex, where val($V_i$) is the valency of vertex $V_i$. This is true also if the valency of $V_i$ is one where the first equation above is not used. We have $2 E$ parameters (not counting $\alpha$), where $E$ is the number of edges. We also know that $\sum_i \text{val}(V_i) =2 E$ for any graph. This means that the number of parameters is the same as the number of equations.
Rearranging Eqn. \ref{eq:deltaalpha} we get:

\begin{equation} \label{eq:deltaalphamatrix}
\bf {M C = 0}.
\end{equation} 
$\bf{C}$ is a column vector containing the elements $a_{ij}$ and $b_{ij}$. $\bf{M}$ is a square matrix of dimension $2 E$ where most of the elements are small natural numbers. However, each vertex contributes one row containing two elements containing $\alpha$ ($ \pm 2 \pm \alpha$ and $\pm \alpha$, where the signs depend on the vertex). This can be seen from  Eqn. \ref{eq:deltaalphaderivative}).\\

%We give a theorem:
%\begin{theorem} \label{th:determinantzero}
%For all compact graphs with $\delta(\alpha)$-type boundary conditions it is never the case that the determinant of $\bf{M}$ is zero for all $\alpha$. We conjecture that this is true also if the boundary conditions are $\delta'_s(\beta)$-type, $\delta'(\gamma)$-type or $\delta_p(\epsilon)$-type, with the appropriate variable.
%\end{theorem} 
%\begin{proof}
%A determinant of $\bf{M}$ (which is a polynomial in $\alpha$) which is zero for all $\alpha$ is a very strong condition. Let's investigate what happens in the limit $\alpha \to \infty$. This corresponds to Dirichlet boundary conditions and in such a case we do not have eigenvalue zero. Thus the system $\bf {M C = 0}$ has only the trivial solution in the $\alpha \to \infty$ limit and the determinant of the matrix $\bf M$ in this limit must be non-zero. Consequently there must be some finite $\alpha$ such that the determinant of $\bf M$ is not zero.
%\end{proof}
%

%Given Theorem \ref{th:determinantzero} we find:

\begin{theorem} \label{th:delta}
If we have a compact graph with $\delta(\alpha)$-type boundary conditions, there are a only a finite number of $\alpha$, (which cannot be positive), such that the zero eigenvalue exist. If $\alpha=0$ then the graph has eigenvalue zero.
\end{theorem} 
\begin{proof}
In order to have a non-trivial solution to Eqn. \ref{eq:deltaalphamatrix} we require that the determinant of $\bf{M}$ is zero. This determinant is a polynomial, $p(\alpha)$, where the degree is at most the same as the number of vertices. We first exclude the case that $p(\alpha)$ is zero for all $\alpha$. Let us investigate what happens in the limit $\alpha \to \infty$. This corresponds to Dirichlet boundary conditions, $f_0=0$ in Eqn.  \eqref{eq:delta}, and in such a case we do not have eigenvalue zero. Thus the system $\bf {M C = 0}$ has only the trivial solution, $\bf {C = 0}$, in the $\alpha \to \infty$ limit and the determinant of the matrix $\bf M$ in this limit must be non-zero. Consequently there must be some finite $\alpha$ such that the determinant of $\bf M$ is not zero and $p(\alpha)$ cannot be identically zero. $p(\alpha)$ has thus a finite number of roots, equal to the degree, and only in those cases do we have eigenvalue zero. 

Suppose an eigenfunction $f>0$ has a maximum at some vertex. Then all normal derivatives must be non-positive there, thus $\alpha\leq0$. If $f<0$ everywhere, then it has a minimum where all normal derivatives must be positive, thus $\alpha\leq0$. $\alpha=0$ is allowed since $\delta(0)$ has eigenvalue zero. The corresponding eigenfunction is the constant function for connected graphs with spectral multiplicity one. With spectral multiplicity we mean the dimension of the eigenspace of eigenvalue zero.
\end{proof}
Note that the proof is essentially unchanged even if the graph is not equilateral, but has edges with arbitrary, but finite lengths.\\

%This theorem can be generalised to situations where each vertex $V_i$ has their own $\delta(\alpha_i)$-type boundary condition, with $\alpha_i$ depending on the vertex. There will be finite number of sets $\{\alpha_i\}_{\{i = 1 ... n\}}$ such that the zero eigenvalue exist. This number is the number of vertices.\\
 
 Let us now consider equilateral graphs with $\delta'_s(\beta)$-type boundary conditions. We get the following equations instead:\\
 
\begin{subequations} \label{eq:deltaalphaprimes}
\begin{empheq}[left=\empheqlbrace]{align}
\displaystyle  \epsilon_{ij}2a_{ij}&=\epsilon_{ik}2a_{ik}  \\[3mm]
\displaystyle \sum_{j} -\epsilon_{ij}a_{ij}+b_{ij}&=\beta \epsilon_{ik}2a_{ik}  & \label{eq:deltaalphaprimesderivative}
\end{empheq}
\end{subequations}

For $\delta'_s(\beta)$-type boundary conditions we have:

\begin{theorem} \label{th:delta'_s}
%Assume $\delta'_s(\beta)$-type boundary conditions. 
Let $\Gamma=(V,E)$ be a connected equilateral metric graph equipped with the Laplace operator subject to $\delta'_s(\beta)$-type boundary conditions at all vertices.
%A graph that has zero as an eigenvalue with $a\neq 0$ must be a bipartite graph and any bipartite graph has eigenvalue zero. For such a solution $\beta= -E/V$, where E is the number of edges and V is the number of vertices. 

The graph $\Gamma$ admits a non-constant eigenfunction for eigenvalue $\lambda=0$ if and only if $\Gamma$ is a bipartite graph and the coupling parameter satisfies: $\beta = -E/V$.

 If $\Gamma$ is not bipartite, or if $\beta \neq -E/V$, then any existing eigenfunction corresponding to $\lambda=0$ must be piecewise constant on the edges. 

There are graphs without zero as an eigenvalue.
\end{theorem}

%\begin{theorem} \label{th:delta'_s}
%Let $\Gamma=(V,E)$ be a connected equilateral metric graph equipped with the Laplace operator subject to $\delta'_s(\beta)$-type boundary conditions at all vertices. Let eigenfunctions corresponding to the eigenvalue $\lambda=0$ be denoted by $f(x)$.
%
%\begin{enumerate}
%    \item \textbf{Non-constant Eigenfunctions:} The graph $\Gamma$ admits a non-constant eigenfunction for $\lambda=0$ (specifically, one with non-vanishing derivatives on the edges) if and only if $\Gamma$ is a bipartite graph and the coupling parameter satisfies:
%    \begin{equation*}
%        \beta = -\frac{|E|}{|V|}
%    \end{equation*}
%    
%    \item \textbf{Piecewise Constant Eigenfunctions:} If $\Gamma$ is not bipartite, or if $\beta \neq -|E|/|V|$, then any existing eigenfunction corresponding to $\lambda=0$ must be piecewise constant on the edges (i.e., the derivative $f'(x) \equiv 0$ everywhere). Consequently, the kernel of the Laplacian contains only piecewise constant functions or is trivial.
%\end{enumerate}
%\end{theorem}

\begin{proof} 
We first note that the eigenfunctions on adjacent vertices (i. e. sharing an edge) have opposite signs of the normal derivatives at the two vertices, if $a\neq 0$. We can thus color the vertices, say red and blue, corresponding to the sign of the normal derivatives, such that no two adjacent vertices have the same color. This is precisely the definition of a bipartite graph. Let us label the $m$ red vertices (with positive normal derivatives) from $1$ to $m$ and the rest from $m+1$ to $m+n$, where $n$ is the number of blue vertices. The eigenfunctions are $2ax+b_{ij}$ and the normal derivatives are $2 a$ and $-2 a$ at red and blue vertices, respectively. The number of red vertices is $V_{red}$, the number of blue vertices is $V_{blue}$ and the total number of vertices is $V$. \\

We have at a red vertex, say $V_i$, with valence $v_i$ and at a blue vertex, say $V_k$, with valence $v_k$: 

\begin{subequations} \label{eq:deltap}
\begin{empheq}[left=\empheqlbrace]{align}
\displaystyle  - v_i a + \sum_{j=1}^{v_i} b_{ij}=\beta 2 a   \label{eq:deltapderivative} \\[3mm]
\displaystyle v_k a + \sum_{i=1}^{v_k} b_{ik}=-\beta 2 a   \label{eq:deltapderivative}
\end{empheq}
\end{subequations}

Summing over all red and blue vertices we have:

\begin{subequations} \label{eq:deltap}
\begin{empheq}[left=\empheqlbrace]{align}
\displaystyle \sum_{i=1}^{V_{red}}  (- v_i a + \sum_{j} b_{ij})&=V_{red}\beta 2 a   \label{eq:deltapderivative} \\[3mm]
\displaystyle \sum_{k=1}^{V_{blue}}  ( v_k a + \sum_{i} b_{ik})&=-V_{blue}\beta 2 a   \label{eq:deltapderivative}
\end{empheq}
\end{subequations}

for red and blue vertices, respectively, where we note that $i<j$. This can be written:

\begin{subequations} \label{eq:deltap}
\begin{empheq}[left=\empheqlbrace]{align}
\displaystyle -E a +  \sum_{ij} b_{ij}&=V_{red}\beta 2 a  \label{eq:deltapderivative} \\[3mm]
\displaystyle E a +  \sum_{ij} b_{ij}&=-V_{blue}\beta 2 a  \label{eq:deltapderivative}
\end{empheq}
\end{subequations}
Taking the sum and difference of these equations we get:

\begin{subequations} \label{eq:deltap}
\begin{empheq}[left=\empheqlbrace]{align}
\displaystyle 2 \sum_{ij} b_{ij}&=(V_{red}-V_{blue})\beta 2 a  \label{eq:deltapderivative} \\[3mm]
\displaystyle - 2 E a&=V\beta 2 a  \label{eq:deltapderivative}
\end{empheq}
\end{subequations}

and we see that $\beta=-E/V$ if $a \neq 0$. If $\beta=-E/V$ then $a$ can be any number and we have a consistent system with at least one solution (for trees) and many solutions if the graph contains cycles.\\

For an example of a graph which is not bipartite but which has zero as an eigenvalue see Fig. \ref{fig:feigenvaluezero} and $\beta$ can have any value. The eigenfunction is constant on each edge. The triangular graph does not have zero as an eigenvalue.
\end{proof}

For $\delta'(\gamma)$ boundary conditions, we get:

\begin{subequations} \label{eq:deltagamma}
\begin{empheq}[left=\empheqlbrace]{align}
\displaystyle (- \epsilon_{ij}a_{ij}+b_{ij}-(-\epsilon_{ik}a_{ik}+b_{ik}))&= \gamma (\epsilon_{ij}2 a_{ij}- \epsilon_{ik}2 a_{ik}) \\[3mm]
\displaystyle \sum_{j} \epsilon_{ij}2 a_{ij}&=0  & \label{eq:deltagammaderivative}
\end{empheq}
\end{subequations}

\begin{figure}
\centering
\includegraphics[width=1.0\textwidth]{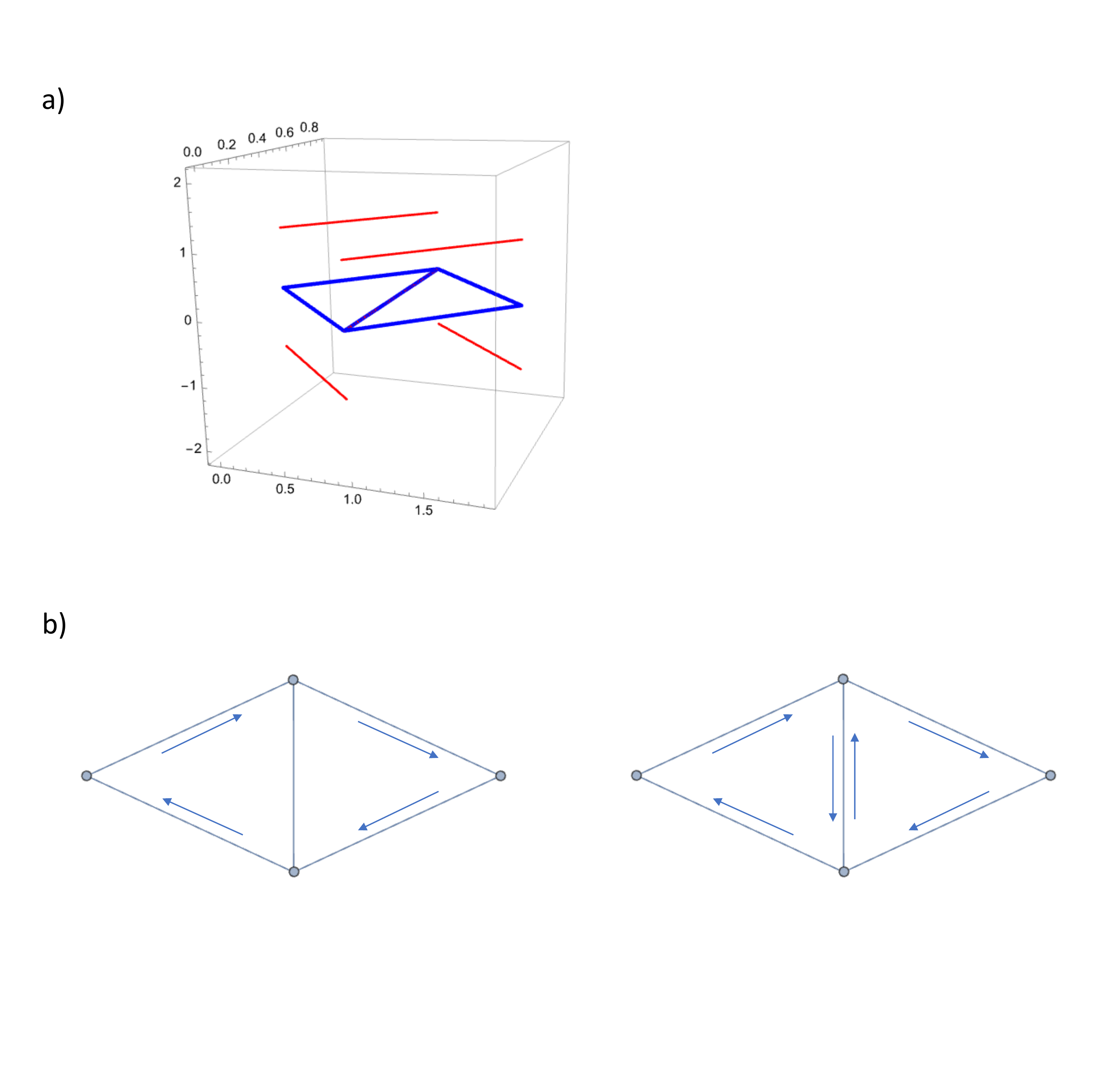}
\caption{a) A graph (blue) which has eigenvalue zero under $\delta'_s(\beta)$-type boundary conditions, along with the eigenfunction, red. This graph is not bipartite. b) $\delta'(\gamma)$-type boundary conditions. An illustration of an eigenfunction, with support on a cycle, that can be written as the sum of two eigenfunctions, with support on two independent cycles. The arrows illustrate the flow of the eigenfunctions.}
\label{fig:feigenvaluezero}
\end{figure}

\begin{theorem} \label{th:delta'}
Let $\Gamma=(V,E)$ be a connected equilateral metric graph equipped with the Laplace operator subject to $\delta'(\gamma)$-type boundary conditions. Let $g = |E| - |V| + 1$ denote the cyclomatic number (the number of independent cycles) of $\Gamma$. The geometric multiplicity of the eigenvalue $\lambda = 0$, denoted $m(0)$, is determined as follows:

\begin{enumerate}
    \item If $g = 0$ (i.e., $\Gamma$ is a tree), then $m(0) = 0$.
    \item If $g \geq 1$ and $\Gamma$ possesses at least one terminal vertex:
    \begin{equation*}
        m(0) = 
        \begin{cases} 
        g & \text{if } \gamma = -1/2, \\
        0 & \text{if } \gamma \neq -1/2.
        \end{cases}
    \end{equation*}
    In the case where $\gamma = -1/2$, the eigenspace is spanned by $g$ linearly independent functions with support on the cycles.
    \item If $g \geq 1$ and $\Gamma$ possesses no terminal vertices:
    \begin{equation*}
        m(0) = 
        \begin{cases} 
        g + 1 & \text{if } \gamma = -1/2, \\
        1 & \text{if } \gamma \neq -1/2.
        \end{cases}
    \end{equation*}
    Here, the constant function is an eigenfunction for all $\gamma$. If $\gamma = -1/2$, the eigenspace is spanned by the constant function and $g$ linearly independent functions with support on the cycles.
\end{enumerate}
\end{theorem}

%\begin{theorem} \label{th:delta'}
%Assume $\delta'(\gamma)$-type boundary conditions and connected equilateral graphs. We  have three situations:\\
%\begin{itemize}
%\item The graph contains a cycle and has a terminal vertex. The graph then has eigenvalue zero and there is an eigenfunction, $ax$, with $\gamma= -1/2$ and $a \neq 0$ which has support only on the cycle. The spectral multiplicity of eigenvalue zero is equal to the number of independent cycles.
%\item The graph contains a cycle and but has no terminal vertex. The graph then has eigenvalue zero and there is an eigenfunction, $ax$, with $\gamma= -1/2$ and $a \neq 0$ which has support only on the cycle. The constant function is an eigenfunction for all $\gamma$. The spectral multiplicity of eigenvalue zero is equal to the number of independent cycles plus one.
%\item The graph does not contain a cycle. The graph does not have eigenvalue zero.
%\end{itemize}
%\end{theorem}

\begin{proof}
When we say eigenfunction we always mean an eigenfunction with eigenvalue zero. We first observe that the eigenspace is linear. We also note that the constant function, $b$, is a solution for any $\gamma$, if there are no terminal vertices. This can be seen from Eqns. \eqref{eq:delta'}. However if we have a terminal vertex, then a possible eigenfunction must be zero on the associated pendant edge and no constant eigenfunction exist for such graphs. We also see that there are no piecewise constant eigenfunctions, $b_i$, that are constant on each edge, but where $b_i \neq b_j$, since this will immediately violate Eqn. \eqref{eq:delta'1}.\\
We will in the rest of the proof let $b=0$ in any eigenfunction $ax+b$, unless otherwise noted, since the constant factor can be added in the end.

Consider a vertex $V$ with valence at least two and let the eigenfunction on edge $e_{j}$ emanating from $V$ be $2 a_j x$, with $x=-1/2$ at $V$. Then from Eqns. \eqref{eq:delta'}:
\begin{subequations} \label{eq:starvertex}
\begin{empheq}[left=\empheqlbrace]{align}
\displaystyle a_{j}-a_{k}&=-\gamma(2a_{j}-2a_{k})  \label{eq:starvertex1} \\[3mm]
\displaystyle 2 \sum_{j} a_{j}&=0  \label{eq:starvertex2}
\end{empheq}
\end{subequations}

In order to have a solution, with at least two $a_j$ different from zero, it is necessary that $\gamma = -1/2$.\\
Let's define the $\mathbf{flow}$ of an eigenfunction on an edge as the direction of the coordinate system of the edge. The magnitude of the flow is defined as $a$ and the direction of the flow is along the direction where the eigenfunction increases. From Eqn. \eqref{eq:starvertex2} we see that for eigenfunctions the flow is conserved at each vertex, that is, Kirchoffs law holds, similar to the case of standard boundary conditions. At a pendant edge we have no flow.

Consider a graph with a cycle with vertices $V_1, V_2, ... , V_n$, such that $V_1$ is connected to $V_2$ and so on until $V_n$ is connected to $V_1$. The edges are $\{e_{12}, e_{23}, ... , e_{n1}\}$. Let the eigenfunction on edge $e_{ij}$ be $2 a x$, when $e_{ij}$ belong to the cycle and zero otherwise. This eigenfunction will satisfy Eqns. \eqref{eq:delta'} with $\gamma= -1/2$ for any $a$. There is thus a non-constant function which is an eigenfunction. Note that $a$ can be any number but it has to be the same for each edge. This can be easily understood by flows. We can have flow around a cycle.

We will now show that an eigenfunction on any cycle which is not independent, can be written as a linear combination of eigenfunction on independent cycles. Suppose we have an eigenfunction on a cycle which is not independent. It can easily be seen that the eigenfunction can be written as the sum of two eigenfunctions on smaller cycles as illustrated in Fig. \ref{fig:feigenvaluezero}. If any of these smaller cycles is not independent, we continue the process until we have a set of independent cycles such that the initial eigenfunction is a sum of the eigenfunctions on the independent cycles. The concept of flows makes this easy to understand.

We find a non-constant eigenfunction can have support only on edges belonging to a cycle. Define a cyclefree subtree of a graph as a subtree where no edge belongs to a cycle of the full graph. Define a maximal cyclefree subtree as a cyclefree subtree that is not a subtree of a bigger cyclefree subtree. The terminal vertices of such a maximal cyclefree subtree must either end at a valence one vertex or end at a vertex where all other edges belong to a cycle. The flow on such a maximal cyclefree subtree must be zero, since neither terminal vertices nor cycles can act as sources or sinks for the flow. Thus non-constant eigenfunctions have support only on cycles.

We find that the number of eigenfunctions (the spectral multiplicity) is the same as the number of independent cycles plus one (due to the constant function) if there are no terminal vertices. If there are terminal vertices we do not have the constant function as an eigenfunction and the spectral multiplicity is the same as the number of independent cycles.
\end{proof}

For $\delta_p(\epsilon)$ boundary conditions, we get:

\begin{subequations} \label{eq:deltap}
\begin{empheq}[left=\empheqlbrace]{align}
\displaystyle  \epsilon (- \epsilon_{ij}a_{ij}+b_{ij}-(-\epsilon_{ik}a_{ik}+b_{ik}))&=  \epsilon_{ij}2 a_{ij}- \epsilon_{ik}2 a_{ik}  \label{eq:deltapder} \\[3mm]
\displaystyle \sum_{j} -\epsilon_{ij}a_{ij}+b_{ij}&=0  & \label{eq:deltapfkn}
\end{empheq}
\end{subequations}

\begin{theorem} \label{th:delta}
If we have a compact graph with $\delta_p(\epsilon)$-type boundary conditions, there are a only a finite number of $\epsilon$ such that the zero eigenvalue exist. 
\end{theorem} 

\begin{proof}
If we let $\epsilon \to \infty$ we find that we have Dirichlet boundary conditions. Using the same argument as in the proof of Theorem \ref{th:delta} we see that the determinant of $M(\epsilon)$ cannot be identically zero. Thus we have only a finite number of $\epsilon$ such that we have a  non-trivial solution of $M(\epsilon) C=0$, and thus a finite number of $\epsilon$ such that the graph has eigenvalue zero. 

%At a pendant edge we see from Eqn. \eqref{eq:deltapfkn} that the value of the eigenfunction at the terminal vertex is zero and from Eqn. \eqref{eq:deltapder} we see that the normal derivative of the eigenfunction is also zero, thus the eigenfunction is zero on this edge.
\end{proof}

%
%We need some notation in the following. Let us denote the $\delta$-type boundary conditions $\delta_n(\alpha)$, where we specify the $\alpha$ and where the subscript denotes the valency of the vertices where these boundary conditions apply. If we have a set of vertices having $\delta$-type boundary conditions we write $\delta_{n_1,n_2,n_3, ...}(\alpha_1,\alpha_2,\alpha_3, ...)$, where $n_i$ corresponds to all vertices having valence $n_i$ and $\alpha_i$ is the particular parameter used for these vertices. Vertices not listed are assumed to have Neumann boundary conditions, unless otherwise noted. If all vertices have the same boundary condition we write $\delta(\alpha)$, sometimes dropping the $\alpha$. We also use the convention that vertices with valence two always have Neumann boundary conditions, and can thus be removed from a graph. For vertices with valence one, the situation is different and we will explicitly specify if the boundary condition is Neumann or Dirichlet. The notation can be generalised to explicitly enumerate the vertices where the boundary conditions apply, $\delta_{v_1, v_2, ...}(\alpha_1, \alpha_2, ...)$, but this situation does not occur in this paper. Our investigations pertain to the situation where all vertices with the same valency have the same boundary conditions. This is a severe limitation, but has been hard enough to implement in software.

\subsection{Systematic search}

We first investigated isospectral pairs having Dirichlet boundary conditions at terminal vertices and standard boundary conditions otherwise. The first isospectral pair with Dirichlet boundary conditions at terminal vertices occurs at seven vertices, and we show this pair in Fig. \ref{fig:fdirichletall7}a). There are five isospectral pairs with Dirichlet boundary conditions at terminal vertices having eight vertices, shown in Fig. \ref{fig:fdirichletall7}b-f). Two of those pairs are also isospectral if they have Neumann boundary conditions at all vertices. This is easy to understand. Fig. \ref{fig:fdirichletall7}g) shows a graph with two vertices having the same $M$-function. These two vertices have the same $M$-function as each other also under $\delta_3(\alpha)$-type boundary conditions, (although the $M$-function depends on $\alpha$). Attaching any graph, including graphs having any boundary conditions giving a self-adjoint Laplacian, to either of the two vertices will result in isospectral graphs. The two graphs in Fig. \ref{fig:fdirichletall7}e) do indeed originate from the graph in g) by attaching an interval at the relevant vertices. They are thus isospectral independently of the boundary conditions of this attached interval. Two of the isospectral pairs in Fig. \ref{fig:fdirichletall7} are isospectral also under $\delta$-type boundary conditions, specified in the figure caption.

\begin{figure}
\centering
\includegraphics[width=1.0\textwidth]{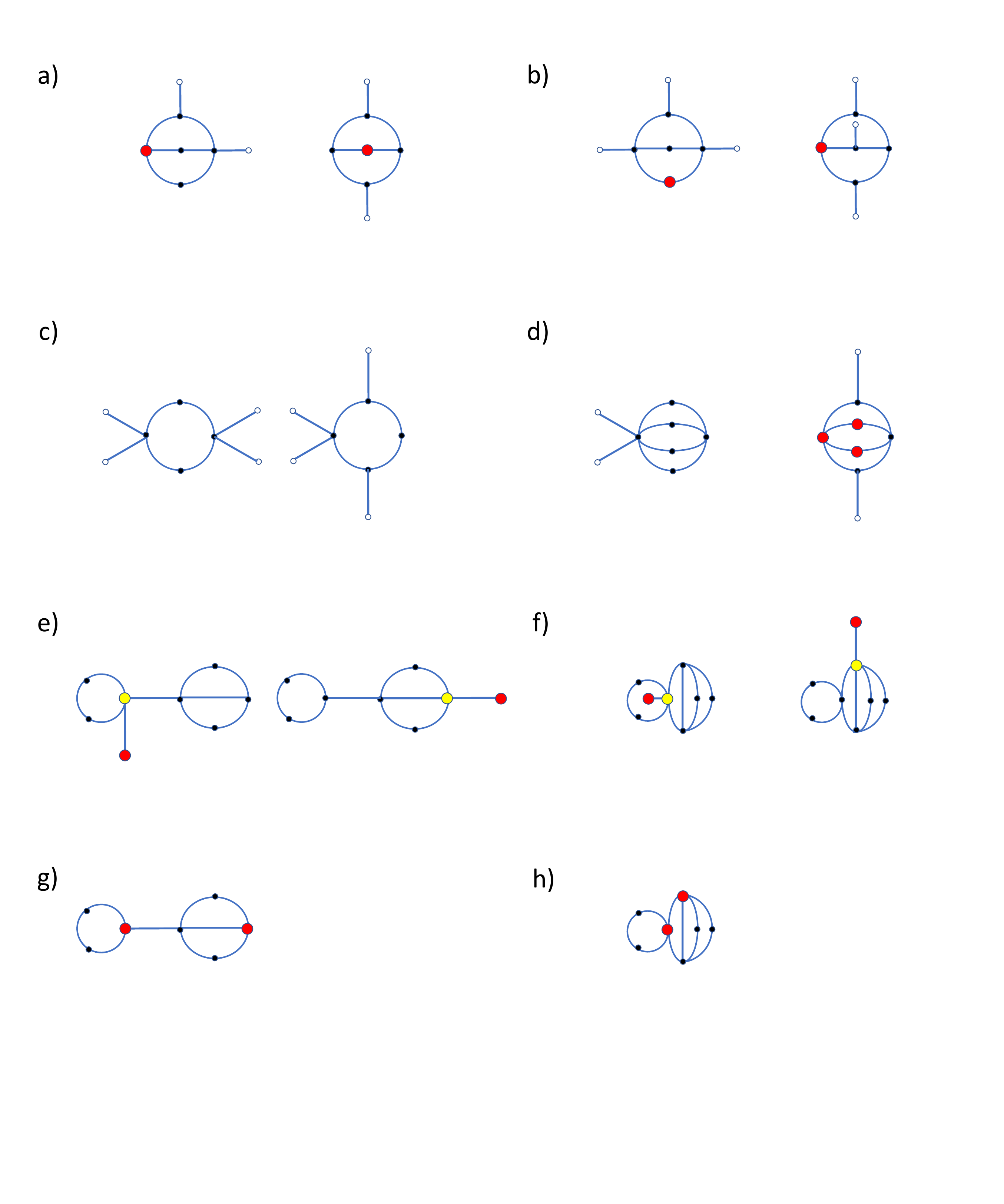}
\caption{Isospectral pairs with Dirichlet boundary conditions at terminal vertices (open circles). Red or yellow vertices have the same $M$-function within each isospectral pair. a) The lone isospectral pair with seven vertices. b-f) The five isospectral pairs with eight vertices. e) Isospectral pairs if the boundary conditions are $\delta_{3}(\alpha)$ and the terminal vertex have either Neumann or Dirichlet boundary conditions. f) Isospectral graphs if the boundary conditions are $\delta_{5}(\alpha)$ and the terminal vertices have either Neumann or Dirichlet boundary condition. g) A graph with two vertices with the same $M$-function under $\delta_{3}(\alpha)$ boundary conditions. Attaching an interval to one of these vertices produces the isospectral pair in e). h) A graph with two vertices with the same $M$-function under $\delta_{5}(\alpha)$ boundary conditions. Attaching an interval to one of these vertices produces the isospectral pair in f). All edges have length one.}
\label{fig:fdirichletall7}
\end{figure}

%\begin{figure}
%\centering
%\includegraphics[width=1.0\textwidth]{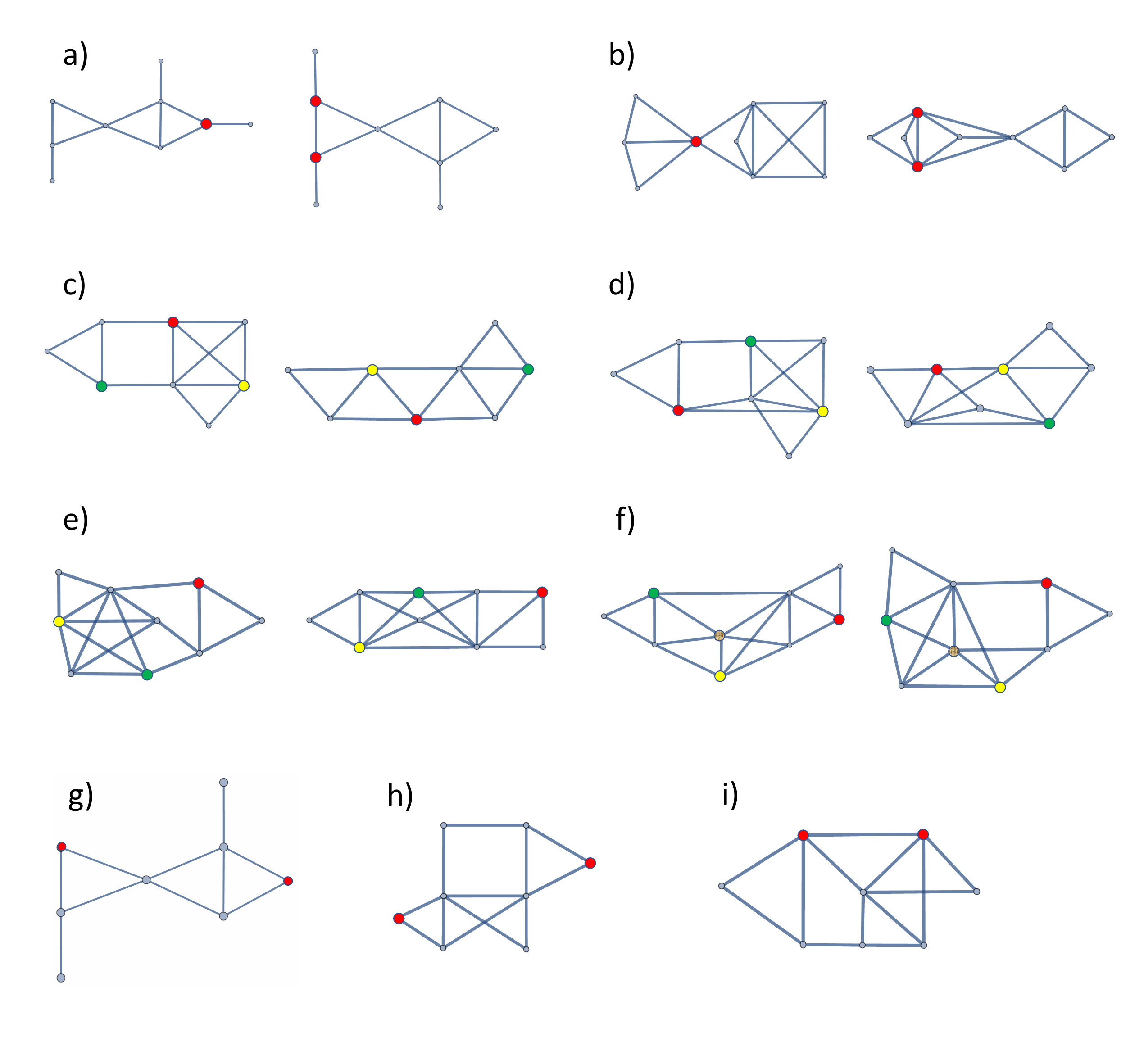}
%\caption{a-b) Isospectral pairs under $\delta$-type boundary conditions. c-d) These pairs are isospectral under $\delta_{3,4,5}(\alpha,\beta,\alpha)$ boundary conditions. e-f) These pairs are isospectral under $\delta_{3,4,5,6}(\alpha,\beta,\gamma,\beta)$ boundary conditions. The notation is explained in the main text. Vertices with the same $M$-function have the same colour within each pair. 
%g-i) These graphs have vertices with equal $M$-functions where the boundary conditions are $\delta$-type and Neumann at pendant edges.}
%\label{fig:fdelta}
%\end{figure}

\begin{figure}
\centering
\includegraphics[width=1.0\textwidth]{fdelta.pdf}
\caption{a-f) Isospectral pairs under $\delta$-type boundary conditions, given in Table \ref{tab:BoundaryConditions1}. Vertices with the same $M$-function have the same colour within each pair. 
g-i) These graphs have vertices with equal $M$-functions where the boundary conditions are $\delta$-type and Neumann at terminal vertices.}
\label{fig:fdelta}
\end{figure}

\begin{figure}
\centering
\includegraphics[width=1.0\textwidth]{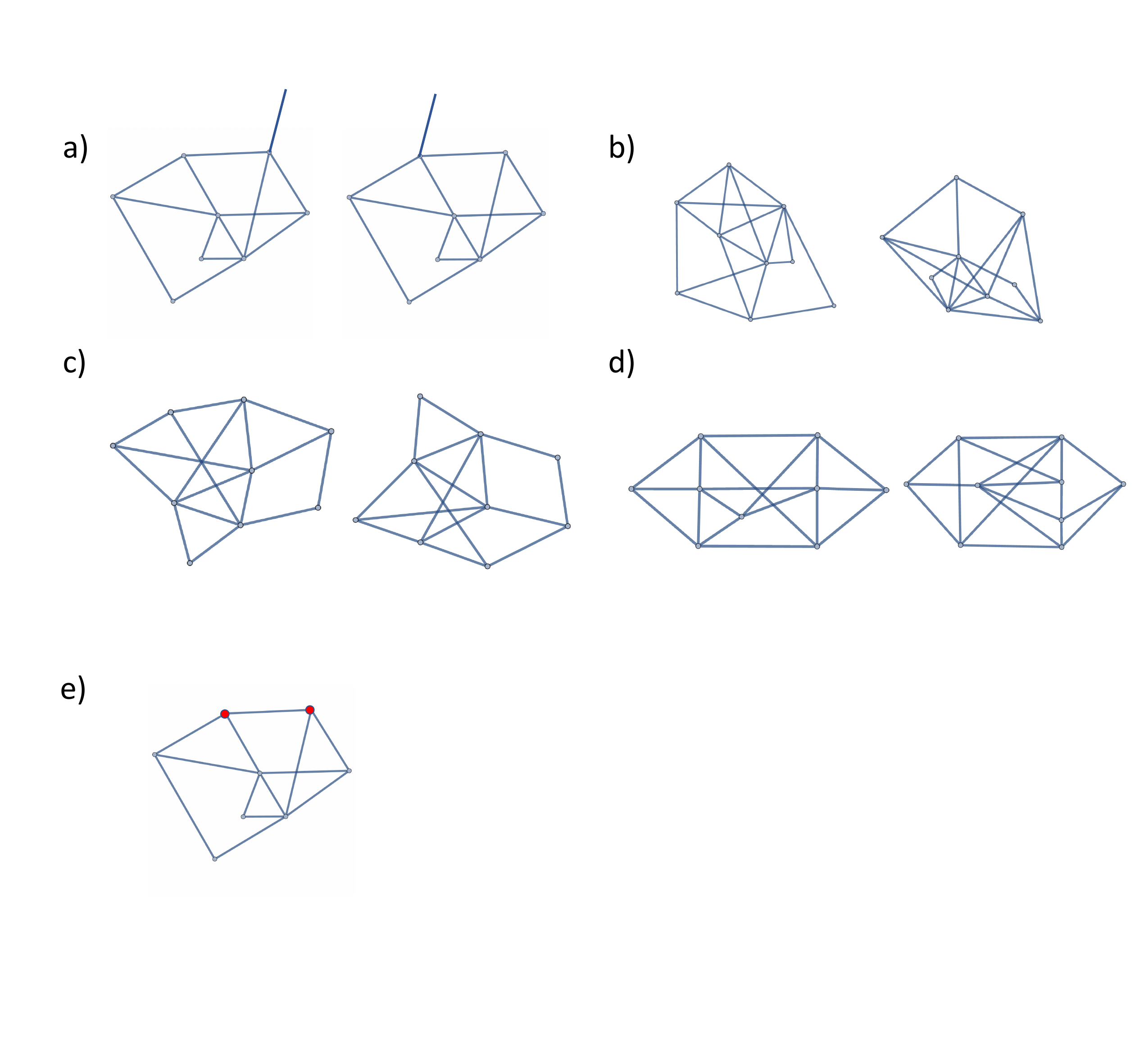}
\caption{a-c) Isospectral pairs under $\delta$-type boundary conditions as well as under $\delta'_s$-type boundary conditions, given in Table \ref{tab:BoundaryConditions1}. These graphs were not tested for vertices with the same $M$-function. e) A graph with two vertices with the same M-function, both under $\delta$-type boundary conditions as well as under $\delta'_s$-type boundary conditions. This graph is the parent graph of the pair in a).}
\label{fig:gDeltaSPrime}
\end{figure}

We then decided to do a systematic search for graphs isospectral under $\delta$-type boundary conditions. Our search finds a set of isospectral pairs under $\delta$-type boundary conditions, shown in Fig. \ref{fig:fdelta} and different sets are given in the Appendix as well as in our software in our GitHub repository \cite{pistol2021graphroots}. The pairs in Fig. \ref{fig:fdelta} a-b) are isospectral if the boundary conditions are $\delta(\alpha)$. Note that $\alpha$ can be any number and the boundary conditions thus include Neumann boundary conditions. These graphs are thus isospectral under a family of boundary conditions. Fig. \ref{fig:fdelta} c-d) shows graphs that are isospectral under $\delta_{3,4,5}(\alpha,\beta,\alpha)$ boundary conditions. Note that vertices with valence three and five need to have the same $\alpha$. Fig. \ref{fig:fdelta} e-f) shows graphs that are isospectral under $\delta_{3,4,5,6}(\alpha,\beta,\gamma,\beta)$ boundary conditions. We find that we can have isospectral graphs where the $\delta$-type boundary conditions can have different number of independent parameters. 

These graphs have vertices with the same $M$-function, indicated in the figure. The $M$-functions of the vertices are equal to each other, as long as the graphs have boundary conditions making them isospectral and this is also the case for the pairs in Fig. \ref{fig:fdirichletall7}e-f) . We here caution the reader. To compute the secular determinant for a single graph takes hours if we have three symbolic parameters, and we cannot do it for four symbolic parameters. Thus we have resorted to numerical experiments to verify that the $M$-functions are equal for all boundary conditions for the graphs in Fig. \ref{fig:fdelta} e-f).

We also found isospectral pairs of graphs under both $\delta$-type boundary conditions as well as under $\delta'_s$-type boundary conditions. These pairs are shown in Fig. \ref{fig:gDeltaSPrime}. There is also a graph which has two vertices with the same $M$-function under both $\delta$-type boundary conditions as well as under $\delta'_s$-type boundary conditions, which is also shown in Fig. \ref{fig:gDeltaSPrime}.

%\begin{table}
%\centering
%\caption{\label{Table 3:all} A summary of the boundary conditions that make various graphs in Fig. \ref{fig:fdelta} isospectral.}
%   \begin{tabular}{lll}
%   &           &             \\ \hline
%    Pendant edges      & Other vertices with valence $>$ 2. & Pairs of graphs     \\ \hline
%    Neumann          & $\delta(\alpha)$      & a-b) \\
%    &  $\delta_{3,4,5}(\alpha,\beta,\alpha)$&  c-d)     \\
%     &  $\delta_{3,4,5,6}(\alpha,\beta,\gamma,\beta)$  & e-f)     \\
%                      \hline
%    \end{tabular}
%\end{table}

\begin{table}
\centering
\caption{\label{Table 3:all} A summary of the boundary conditions that make various graphs in Fig. \ref{fig:fdelta} and \ref{fig:gDeltaSPrime} isospectral.}
    \begin{tabular}{lll}
   &           &             \\ \hline
    Terminal vertices      & Other vertices with valence $>$ 2. & Pairs of graphs   \\ 
       &  & Fig. \ref{fig:fdelta}    \\ \hline
    Neumann          & $\delta(\alpha)$      & a-b) \\
    &  $\delta_{3,4,5}(\alpha,\beta,\alpha)$&  c-d)     \\
     &  $\delta_{3,4,5,6}(\alpha,\beta,\gamma,\beta)$  & e-f)     \\
                      \hline
                          &  &  Fig. \ref{fig:gDeltaSPrime}    \\ \hline
       Neumann, Dirichlet  &  $\delta(\alpha)$, $\delta'_s(\beta)$  & a)     \\
       &  $\delta(\alpha)$, $\delta'_s(\beta)$  & b-c)     \\
                               &  $\delta(\alpha)$, $\delta'_s(\beta)$, $\delta'(\gamma)$, $\delta_p(\epsilon)$  & d)     \\  \hline

    \end{tabular}
    \label{tab:BoundaryConditions1}
\end{table}

It is also the case that we can have individual graphs with two vertices having the same $M$-function as each other even if we change the boundary conditions. This we illustrate in Fig. \ref{fig:fdelta} g-i). 
We can attach graphs to the vertices with the same $M$-function having any boundary conditions and get isospectral graphs, as shown Fig. \ref{fig:ctwohotmirror}, thus generating isospectral graphs that remain isospectral under quite flexible boundary conditions. Even the two attachment vertices themselves can have different boundary conditions as long as the two vertices have the same boundary condition.

\subsection{Trees}

We have also investigated trees with different combinations of Dirichlet, Neumann, standard and $\delta$-type boundary conditions, where we can search all trees having at most thirteen vertices. The first isospectral trees with Dirichlet boundary conditions at terminal vertices and standard otherwise occur at nine vertices and they are shown in Fig. \ref{fig:fdiffboundaryconditionstrees}. These isospectral trees were also found by Pivovarchik \cite{Pivovarchik.Dirichlet.2023} and makes it likely that our program is somewhat correct. Kaliuzhnyi-Verbovetskyi and Pivovarchik have investigated the interplay between Neumann and Dirichlet boundary conditions and shown that caterpillar graphs can be recovered uniquely if their spectra under both Neumann and Dirichlet boundary conditions at certain vertices are known \cite{Kaliuzhnyi-Verbovetskyi}. However these results do not generalise easily. We summarise our results in Table 4. We find it remarkable that the two pairs in Fig. \ref{fig:fdiffboundaryconditionstrees} h-i) are isospectral under any combination of Neumann or Dirichlet at terminal vertices and $\delta(\alpha)$-type or $\delta'_s(\beta)$-type boundary conditions otherwise. These graphs are quite simple.
Fig. \ref{fig:fdeltatrees} shows isospectral trees with a slightly different behaviour. They are isospectral under $\delta_{3,4}(\alpha, 2 \alpha)$-type boundary conditions. That is, the boundary conditions at different vertices are the same but the parameters are not equal but depend on each other. The boundary condition at terminal vertices is Dirichlet. Neumann boundary conditions at terminal vertices destroys the isospectrality.

\begin{figure}
\centering
\includegraphics[width=1.0\textwidth]{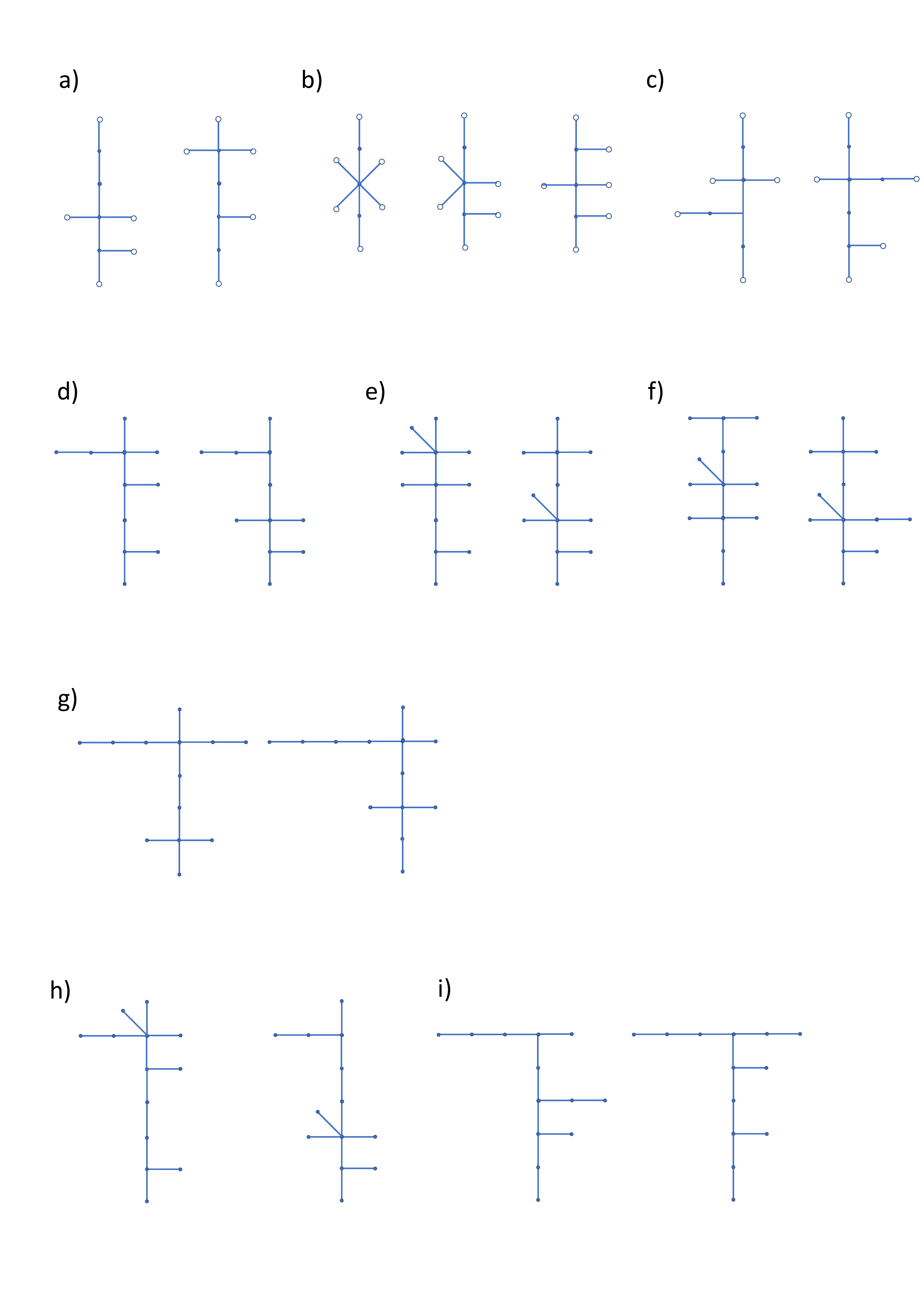}
\caption{Isospectral trees under different boundary conditions, given in Table \ref{tab:isospectraltreesboundaryconditions}.}
% a-c) are isospectral under Dirichlet boundary conditions at pendant edges and Neumann otherwise. d-f) are isospectral under Neumann boundary conditions at pendant edges and $\delta$-type boundary conditions otherwise. g) are isospectral under either Dirchlet or Neumann boundary conditions at pendant edges and Neumann otherwise.
%h-i) are isospectral under either Neumann or Dirichlet boundary conditions at pendant edges and either $\delta$-type or $\delta'_s$-type boundary conditions otherwise. Vertices with valence two always have Neumann boundary conditions.}
\label{fig:fdiffboundaryconditionstrees}
\end{figure}

%\begin{table}[]
%\centering
%\caption{\label{Table 3:all} A summary of the boundary conditions that make various graphs in Fig. \ref{fig:fdiffboundaryconditionstrees} isospectral.}
%    \begin{tabular}{lll}
%   &            &             \\ \hline
%    Pendant edges      & Other vertices with valence $>$ 2. & Sets of graphs     \\ \hline
%    Dirichlet          & Neumann        & a-c) \\
%    Neumann & $\delta(\alpha)$        & d-f)    \\
%    Neumann, Dirichlet            & Neumann      & g)     \\
%    Neumann, Dirichlet & $\delta(\alpha)$, $\delta'_s(\beta)$       & h-i)         \\ \hline
%    \end{tabular}
%\end{table}

\begin{table}[]
\centering
\caption{\label{Table 4:all} A summary of the boundary conditions that make various graphs in Figs. \ref{fig:fdiffboundaryconditionstrees} and \ref{fig:fdeltatrees} isospectral.} 
    \begin{tabular}{lll}
   &            &             \\ \hline
    Terminal vertices      & Other vertices with valence $>$ 2. & Sets of graphs, Fig. \ref{fig:fdiffboundaryconditionstrees}      \\ \hline
    Dirichlet          & standard        & a-c) \\
    Neumann & $\delta(\alpha)$        & d-f)    \\
    Neumann, Dirichlet            & standard      & g)     \\
    Neumann, Dirichlet & $\delta(\alpha)$, $\delta'_s(\beta)$       & h-i)         \\ \hline
                           &  &  Fig. \ref{fig:fdeltatrees}    \\ \hline
       Neumann, Dirichlet  &  $\delta_{3,4}(\alpha, 2 \alpha)$  & a-b)    
                                           \\  \hline

    \end{tabular}
    \label{tab:isospectraltreesboundaryconditions}
\end{table}

The behaviour can be more complex as shown in Fig. \ref{fig:fdirichletdisjointneumanntrees}. There are graphs that have one isospectral partner under Dirichlet boundary conditions and $\delta_3(\alpha)$ elsewhere, which have a different isospectral partner under Neumann boundary conditions everywhere. 

\subsection{Discussion}

It is clear from the above examples that we can have graphs that are isospectral to different degrees, meaning under more and more general boundary conditions. It is interesting to find how far this can be pushed. 
However to do so would necessitate a study under all possible boundary conditions, a task we cannot do yet. 

It is also interesting to find what is common to graphs that are isospectral under $\delta$-type boundary conditions. Inspection of the graphs show that each graph in an isospectral pair have the same number of vertices with the same valency. We propose the following conjecture:

\begin{conjecture}
If we have two graphs that are isospectral under \\ 
$\delta_{n_1,n_2, ... , n_k}(\alpha_1,\alpha_2, ... , \alpha_k)$ boundary conditions, then the two graphs have the same number of vertices with valency $n_1$, the same number of vertices with valency $n_2$, and so on.
\end{conjecture}

This conjecture is not true if the boundary conditions are Neumann, see Fig. \ref{fig:aloop}. It seems that increasing flexibility in boundary conditions constrains the isospectral graphs. If so, there might be sufficiently general boundary conditions that a given graph has no isospectral partner. It is of course interesting to know which boundary conditions uniquely specify graphs. Lawniczak et al. have found a generalised Euler characteristic that detects the number of terminal vertices with Dirichlet boundary conditions given the spectrum (if the rest of the boundary conditions are Neumann) \cite{Lawniczak.2021}. 

\begin{figure}
\centering
\includegraphics[width=1.0\textwidth]{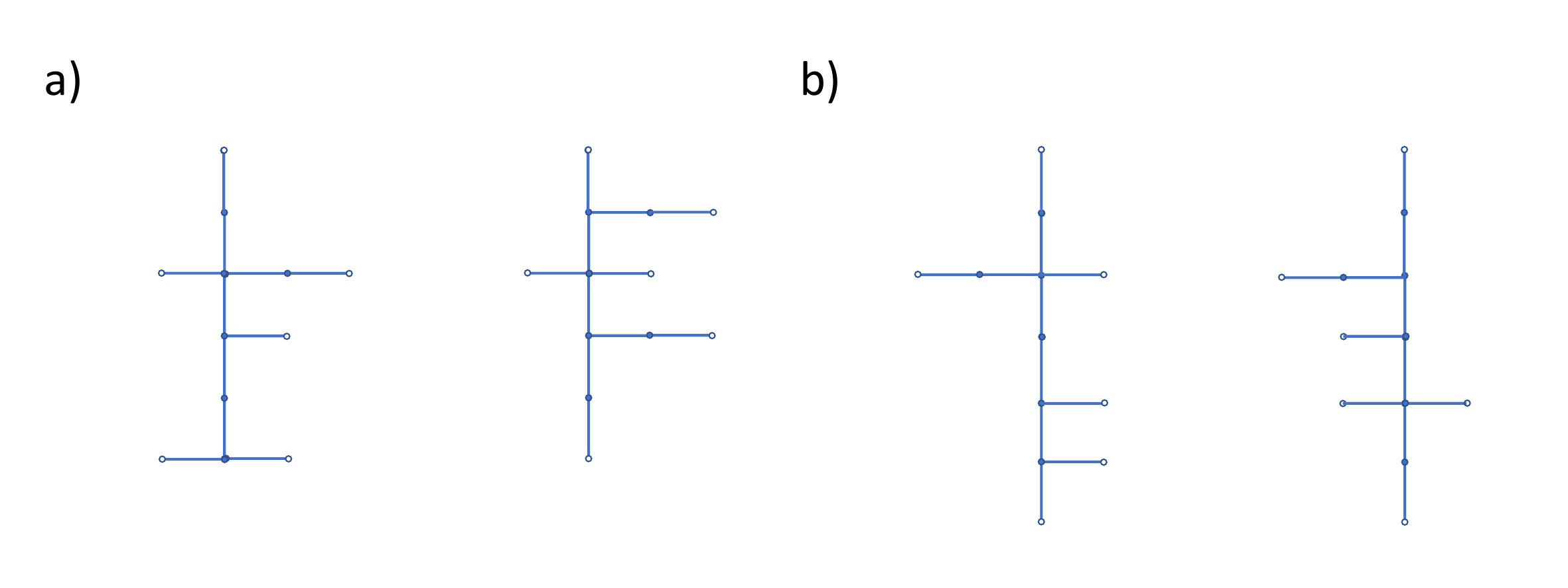}
\caption{Isospectral pairs of trees. These pairs are isospectral under $\delta_{3,4}(\alpha, 2 \alpha)$-type boundary conditions at interior edges and Dirichlet at pendant edges.}
\label{fig:fdeltatrees}
\end{figure}

\begin{figure}
\centering
\includegraphics[width=1.0\textwidth]{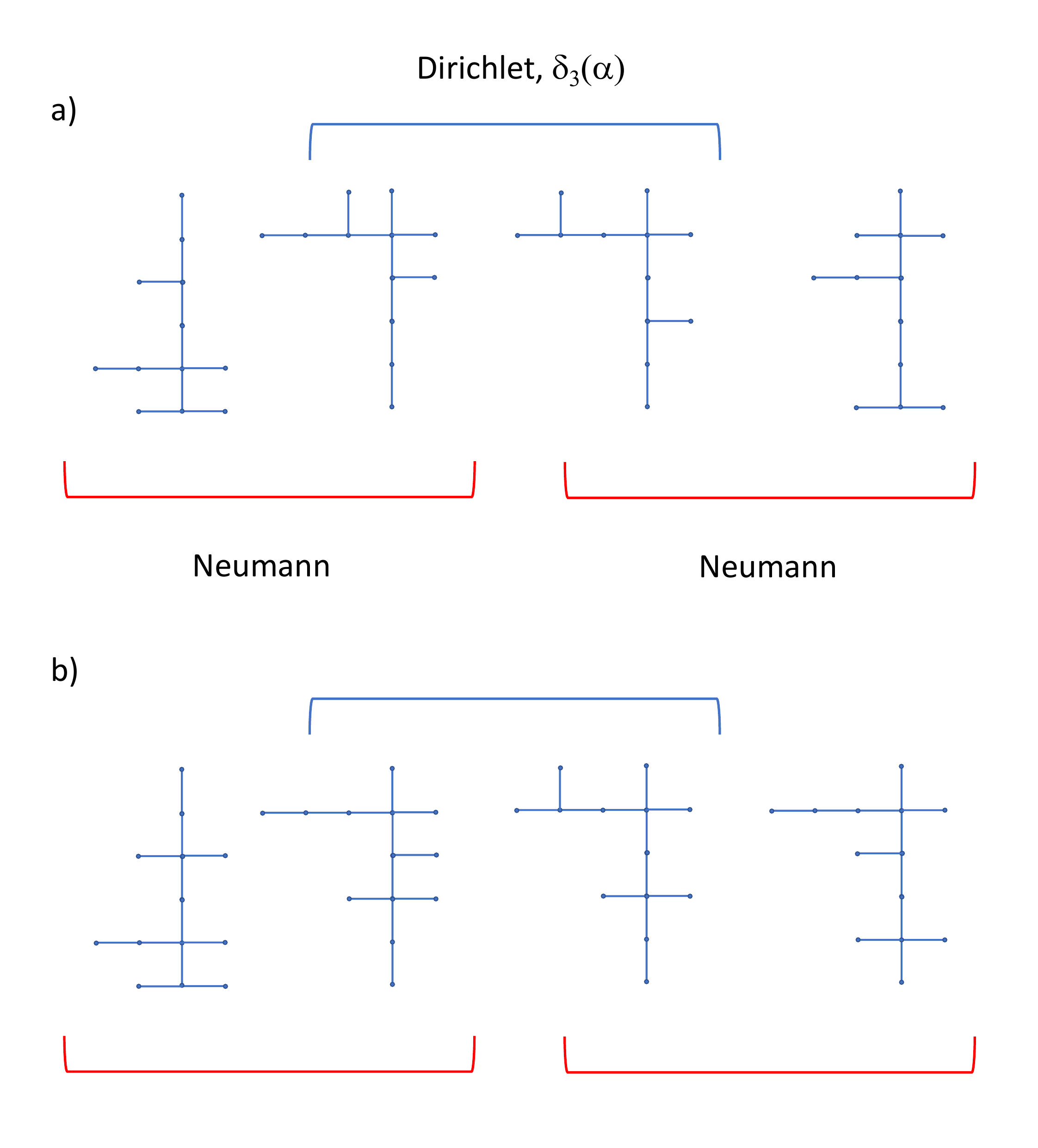}
\caption{Isospectral pairs of trees. Trees grouped by a blue bracket are isospectral under Dirichlet boundary conditions at terminal vertices and $\delta_3(\alpha)$ elsewhere, whereas trees grouped by a red bracket are isospectral under Neumann boundary conditions at terminal vertices, and standard boundary conditions elsewhere. These are all examples of this type for trees with at most 13 vertices.}
\label{fig:fdirichletdisjointneumanntrees}
\end{figure}

\section{Asymmetric boundary conditions and almost isospectral graphs}

We have also started the investigation of isospectral graphs, where one graph has different boundary conditions than the other graph(s). Since each vertex can have its own boundary condition the space of possibilities is very large indeed. Our first examples are shown in Fig. \ref{fig:gmixedisospectral}. The top graph has Dirichlet boundary conditions at terminal vertices and otherwise standard boundary conditions. There are four isospectral partners which have mixed Neumann and Dirichlet boundary conditions at terminal vertices and standard boundary conditions otherwise. This situation is similar to the isospectral pair found by Band et al. \cite{band2009isospectral}. 

Consider connected graphs with standard boundary conditions. If the boundary conditions at terminal vertices are all Neumann, then zero is an eigenvalue. If at least one terminal vertex has Dirichlet boundary conditions then zero is not an eigenvalue. See Ref. \cite{kurasov2005inverse} for a proof that the eigenfunction corresponding to the zero eigenvalue is a constant if the graph is connected. It is often the case that two graphs have the same eigenvalues, apart from eigenvalue zero where they might have different multiplicities or maybe only one of the graphs have eigenvalue zero. We say that graphs which have the same positive eigenvalues with the correct multiplicities to be \textit{almost isospectral}. Examples of almost isospectral graphs are found in Fig. \ref{fig:gmixedbctrees}. We here find trees having Neumann boundary conditions and standard boundary conditions that are almost isospectral to trees having both Neumann and Dirichlet boundary conditions at terminal vertices and standard boundary conditions otherwise. In the figure we also show a tree that is almost isospectral to a double loop.

\begin{figure}
\centering
\includegraphics[width=1.0\textwidth]{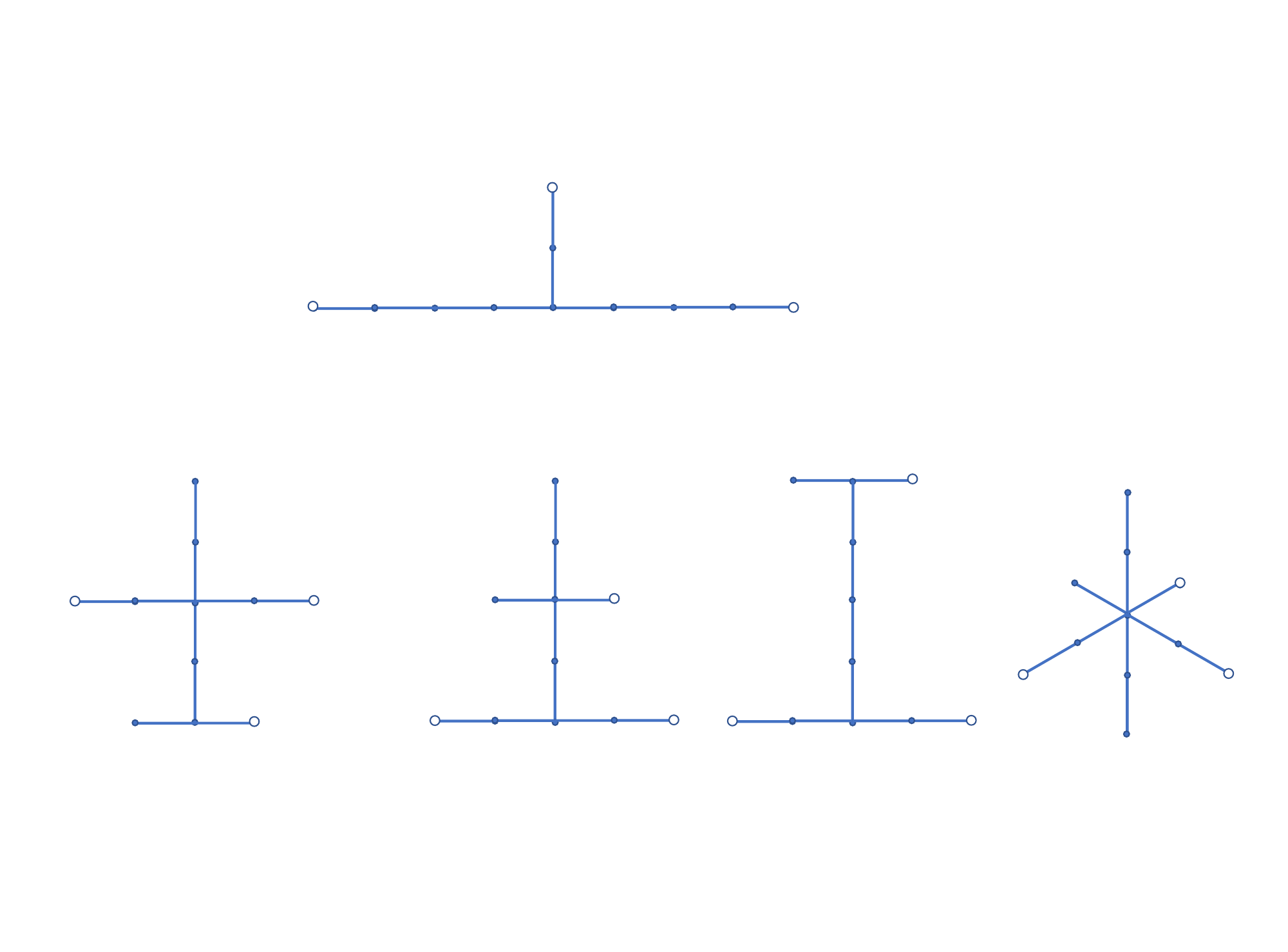}
\caption{Isospectral trees. The top tree has Dirichlet boundary conditions at terminal vertices (open circles). It has four isospectral partners which have both Neumann and Dirichlet boundary conditions at terminal vertices. Vertices with valence three and four have standard boundary conditions.}
\label{fig:gmixedisospectral}
\end{figure}

\begin{figure}
\centering
\includegraphics[width=1.0\textwidth]{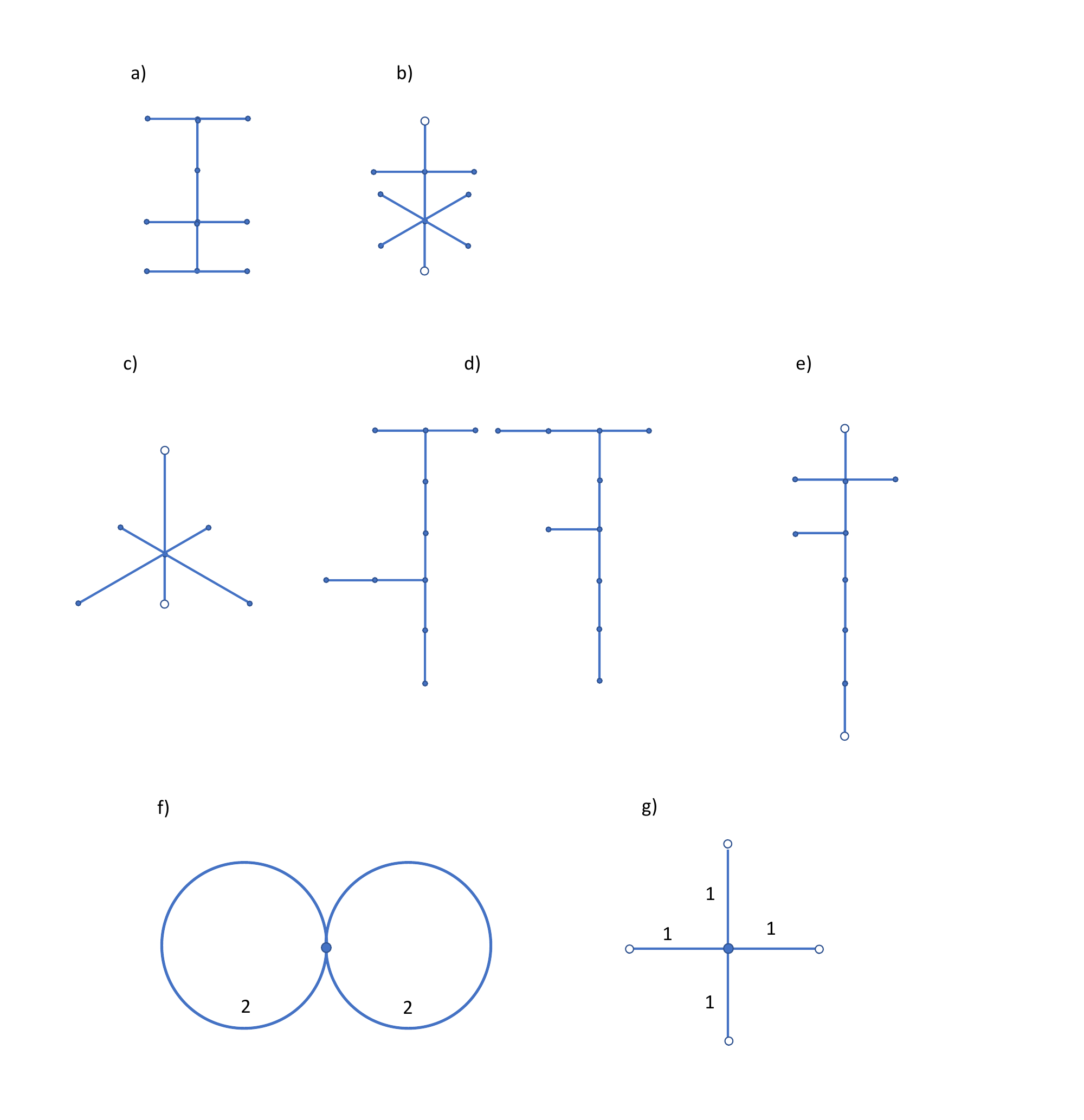}
\caption{Almost isospectral sets of graphs. a) and b) are almost isospectral. d) is an isospectral pair which have two almost isospectral partners c) and e). A double loop, f), has an almost isospectral partner which is a star graph, g). Open circles indicate vertices with Dirichlet boundary conditions, closed circles indicate vertices with standard boundary conditions. These almost isospectral graphs are not isospectral since they differ in eigenvalue zero.}
\label{fig:gmixedbctrees}
\end{figure}

\begin{figure}
\centering
\includegraphics[width=1.0\textwidth]{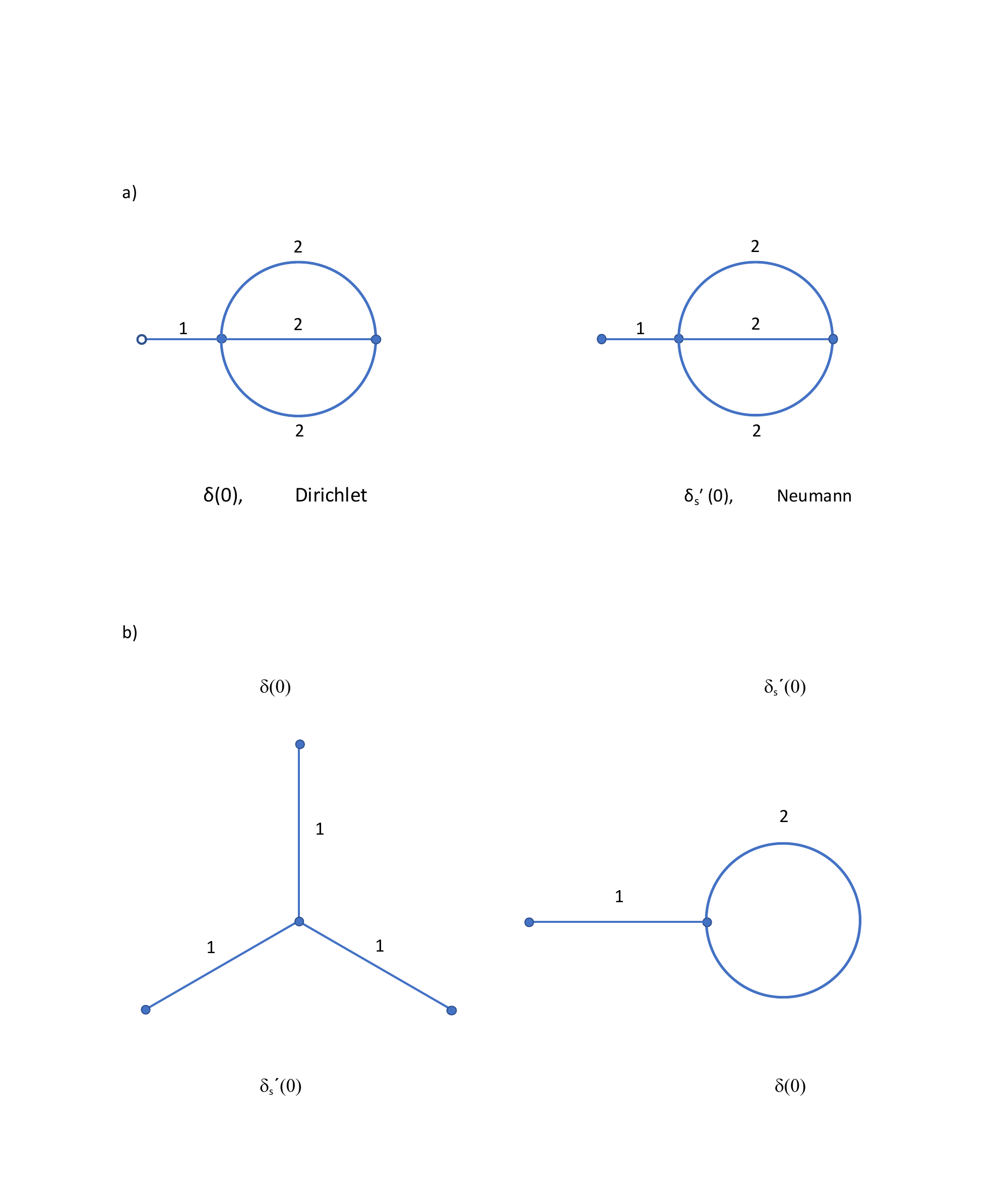}
\caption{a) These two graphs with boundary conditions are almost isospectral. They do differ in eigenvalue zero.
b) One graph can have $\delta(0)$ boundary conditions and the other $\delta_s'(0)$ boundary conditions or the other way around and in both cases they are almost isospectral. We have not investigated eigenvalue zero for this pair. Terminal vertices have Dirichlet boundary conditions if indicated by open circles, otherwise Neumann boundary conditions.}
\label{fig:gspectralduals}
\end{figure}

\section{Different types of isospectrality}

Let us define a new object - a metric graph and its boundary conditions - and denote such an object $\{h, B\}$ and call it a (metric) \textit{graph with boundary conditions}. Here $h$ is the metric graph and B is the boundary conditions for this graph. Usually an isospectral pair is a pair $\{h_1, B\}$ and $\{h_2, B\}$ which are isospectral and where $h_1$ is not isomorphic to $h_2$. That is, the boundary conditions remain fixed. It is however possible to have an (almost) isospectral pair $\{h, B_1\}$ and $\{h, B_2\}$, where the graph remains fixed but the boundary conditions are different. We can also have the situation that $\{h_1, B_1\}$ and $\{h_2, B_2\}$ are (almost) isospectral and $\{h_2, B_1\}$ and $\{h_1, B_2\}$ are (almost) isospectral. In Fig. \ref{fig:gspectralduals} we show examples of both situations.

\section{Outlook and Conclusion}

We have generated all isospectral pairs within a restricted set of quantum graphs, where the length of the edges are all equal and where the number of vertices is limited. We have found several isospectral pairs with different character, such as having different Euler characteristics, having different number of vertices and isospectral pairs that are quite simple. For future work we would like to graphically illustrate the eigenfunctions which might illuminate the relationship between nodal points and isospectrality \cite{band2006nodal} as well as to extend our studies to larger sets of graphs. 

Many of our discoveries have been through inspection of isospectral graphs, and doing experiments with them. This includes the few examples of graphs that are not equilateral that can be used to decorate compact graphs to generate isospectral graphs. We are sure that there are more discoveries to be made by doing more experiments with the graphs and we hope the community will do experiments using our software.

There are many questions left. How to characterize the set of graphs without any isospectral partner is interesting. The interval does not have an isospectral partner so this set is not empty \cite{nicaise1987spectre}, (see \cite{davies2013inverse} for an abstract treatment). The star graphs with $k$ leaves of equal length minimise the $k^{th}$ eigenvalue \cite{friedlander2005extremal} and no other graph does, so they do not have any isospectral partners.

It would be interesting to increase the set of boundary conditions and to find isospectral graphs under even more general boundary conditions than those that we have investigated. The same is the case for $M$-functions. Suppose we have two graphs having vertices $u_i$ and $v_i$, respectively with a one-to-one mapping between $u_i$ and $v_i$. Are there graphs that are isospectral where each pair of vertices , $u_i$ and $v_i$, have $\delta(\alpha_i)$-type boundary conditions and where all $\alpha_i$ can be chosen independently?

We hope that this study will inspire work to classify quantum graphs with respect to isospectrality as well as to study their $M$-functions. 
We also hope that our programs will be subject to strong tests by the community, such that potential errors are found. If there are severe errors in the software, then many results in this paper are incorrect. We have done a lot to make sure the software is correct but there will always be doubts.
Nevertheless, our programs have allowed us to find interesting isospectral graphs which we could prove to be isospectral and it is more difficult to find isospectral graphs than to verify their isospectrality.
 
 \textbf{Limitations}: Our software does not handle graphs which are not equilateral well. It can do it, but the computing time is often excessive. In many cases it cannot give explicit eigenvalues and it cannot give explicit $M$-functions. However numerical eigenvalues and $M$-functions can often be found in limited ranges of the real axis. This can be done with userdefined accuracy.
 
\textbf{Reproducibility Statement}: 
We open-source our code as well as notebooks that allows the reproduction of all our figures. The corresponding graphs are given as Mathematica graph objects allowing for easy manipulation and testing. Our tests of the software is given along with reference results from the literature. We also give notebooks that allows fairly easy testing of all of our isospectrality results for different boundary conditions. In the repository there is software that allows searching for isospectral pairs, given any database of graphs.
 
\clearpage
\newpage
\section{Acknowledgement}
I thank Prof. Pavel Kurasov who introduced me to quantum graphs, encouraged me to do some programming and discussed isospectrality with me. I thank Dr. Gabriela Malenov{\'a} who helped me find quantum graphs with known eigenfrequencies as well as Prof. Ram Band who encouraged me to find a constructive way to generate isospectral graphs, gave me pointers to the literature and who also discussed isospectrality with me. I am grateful to Prof. Vyacheslav Pivovarchik who introduced me to characteristic polynomials which I found very helpful. 

Part of this work was performed in Odesa, Ukraine, and I thank its people for their hospitality also under difficult times.
\clearpage
\newpage
\bibliographystyle{unsrt}
\bibliography{isospectral.16}

\newpage
\section{Appendix}

\renewcommand{\thefigure}{A\arabic{figure}}
\setcounter{figure}{0}

Figs. \ref{fig:A1} and \ref{fig:A2} shows the isospectral pairs we found involving a loop decorated with pendant edges or trees. Fig. \ref{fig:A3} shows all trees we found with att most 12 vertices and Fig. \ref{fig:A4} shows the isospectral triplets with nine edges. Some of the trees are quite simple. \\
We calculated the vertex and edge scattering matrices for the graphs in Figs. \ref{fig:alooptriplets1}c) and \ref{fig:alooptriplets1}d) by hand. For the graph in Fig. \ref{fig:alooptriplets1}c) we get:

$$ S_v =
\left(
\begin{array}{cccc}
 0 & -\frac{1}{3} & \frac{2}{3} & \frac{2}{3} \\
 1 & 0 & 0 & 0 \\
 0 & \frac{2}{3} & \frac{2}{3} & -\frac{1}{3} \\
 0 & \frac{2}{3} & -\frac{1}{3} & \frac{2}{3} \\
\end{array}
\right) $$

and 

$$ S_e =
\left(
\begin{array}{cccc}
 e^{i k L_1} & 0 & 0 & 0 \\
 0 & e^{i k L_1} & 0 & 0 \\
 0 & 0 & e^{i k L_2} & 0 \\
 0 & 0 & 0 & e^{i k L_2} \\
\end{array}
\right)$$
where $L_1 = 1$ and $L_2 = 4$. The edges were ordered - the pendant edge, the loop.

For the graph in Fig. \ref{fig:alooptriplets1}d) we get:

$$ S_v =
\left(
\begin{array}{cccccccccc}
 0 & -\frac{1}{3} & \frac{2}{3} & 0 & \frac{2}{3} & 0 & 0 & 0 & 0 & 0 \\
 1 & 0 & 0 & 0 & 0 & 0 & 0 & 0 & 0 & 0 \\
 0 & 0 & 0 & -\frac{1}{2} & 0 & \frac{1}{2} & \frac{1}{2} & 0 & \frac{1}{2} & 0 \\
 0 & \frac{2}{3} & -\frac{1}{3} & 0 & \frac{2}{3} & 0 & 0 & 0 & 0 & 0 \\
 0 & 0 & 0 & \frac{1}{2} & 0 & -\frac{1}{2} & \frac{1}{2} & 0 & \frac{1}{2} & 0 \\
 0 & \frac{2}{3} & \frac{2}{3} & 0 & -\frac{1}{3} & 0 & 0 & 0 & 0 & 0 \\
 0 & 0 & 0 & 0 & 0 & 0 & 0 & 1 & 0 & 0 \\
 0 & 0 & 0 & \frac{1}{2} & 0 & \frac{1}{2} & -\frac{1}{2} & 0 & \frac{1}{2} & 0 \\
 0 & 0 & 0 & 0 & 0 & 0 & 0 & 0 & 0 & 1 \\
 0 & 0 & 0 & \frac{1}{2} & 0 & \frac{1}{2} & \frac{1}{2} & 0 & -\frac{1}{2} & 0 \\
\end{array}
\right) $$

and

$$ S_e =
\left(
\begin{array}{cccccccccc}
 e^{i k L_1} & 0 & 0 & 0 & 0 & 0 & 0 & 0 & 0 & 0 \\
 0 & e^{i k L_1} & 0 & 0 & 0 & 0 & 0 & 0 & 0 & 0 \\
 0 & 0 & e^{i k L_1} & 0 & 0 & 0 & 0 & 0 & 0 & 0 \\
 0 & 0 & 0 & e^{i k L_1} & 0 & 0 & 0 & 0 & 0 & 0 \\
 0 & 0 & 0 & 0 & e^{i k L_1} & 0 & 0 & 0 & 0 & 0 \\
 0 & 0 & 0 & 0 & 0 & e^{i k L_1} & 0 & 0 & 0 & 0 \\
 0 & 0 & 0 & 0 & 0 & 0 & e^{i k L_1} & 0 & 0 & 0 \\
 0 & 0 & 0 & 0 & 0 & 0 & 0 & e^{i k L_1} & 0 & 0 \\
 0 & 0 & 0 & 0 & 0 & 0 & 0 & 0 & e^{i k L_1} & 0 \\
 0 & 0 & 0 & 0 & 0 & 0 & 0 & 0 & 0 & e^{i k L_1} \\
\end{array}
\right) $$

where $L_1 = 1$. The edges were ordered - left pendant edge, one edge of the loop, the second edge of the loop, one terminal vertex to the right, the other terminal vertexto the right. We find that the secular equation is the same in both cases: $\Sigma (k)=\frac{1}{3} \left(e^{2 i k} - 1\right)^2 \left(7 e^{2 i k}+7 e^{4 i k}+3 e^{6 i k}+3\right)$ and we thus confirm that the graphs are isospectral.

In Fig. \ref{fig:A5} we show isospectral pairs and sets which were derived from Fig. \ref{fig:dhotvertices} by attaching loops to the relevant hot vertices. A variety of pleasing isospectral graphs can be formed. In Fig. \ref{fig:A6} we show examples of graphs which are isospectral under $\delta_{3,4,5,6}(\alpha,\beta,\alpha,\beta)$ boundary conditions.

\begin{figure}[ht]
\centering
\includegraphics[width=1.0\textwidth]{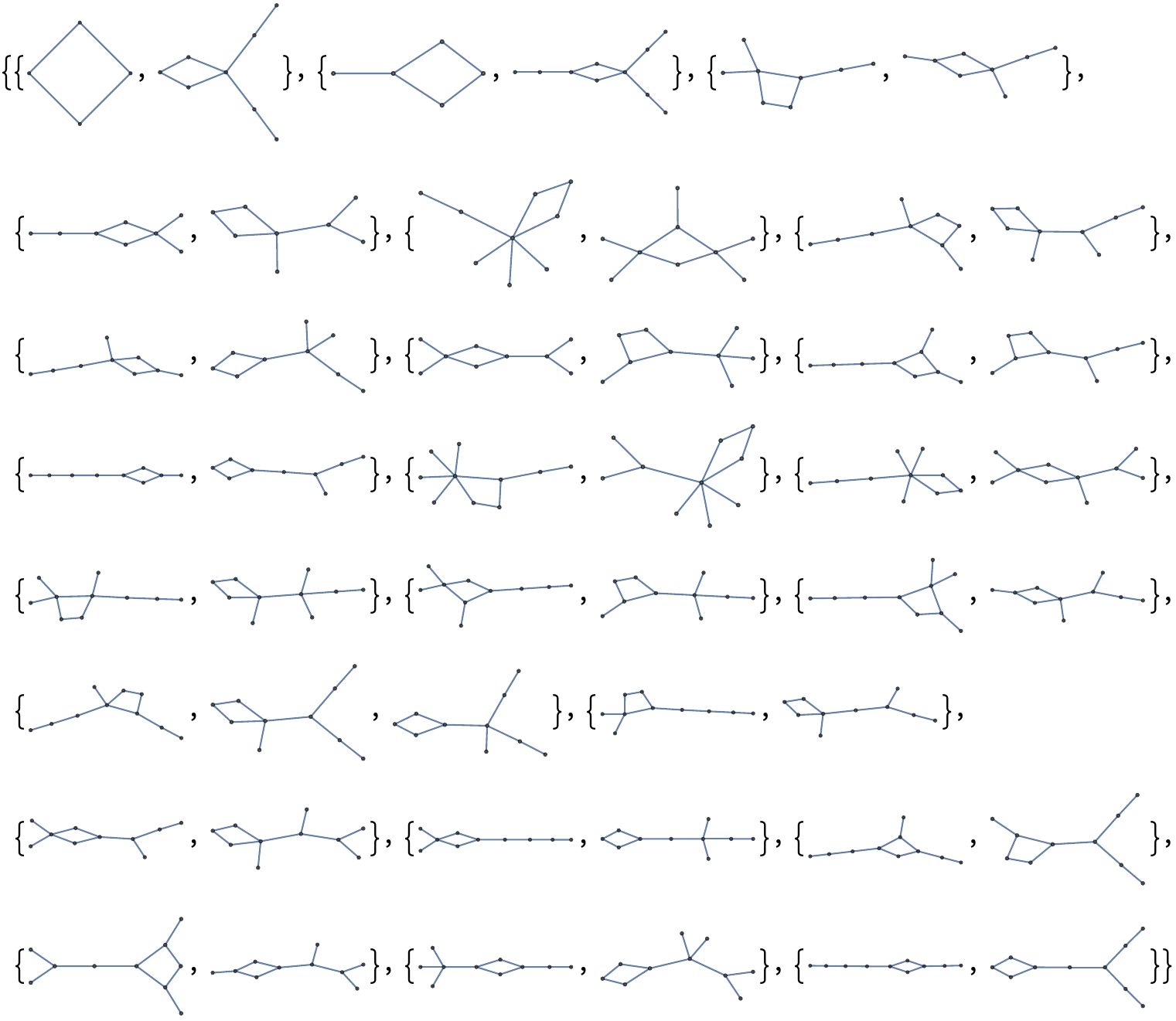}
\caption{All 23 isospectral sets formed from a loop with four vertices, decorated with pendant edges or pendant trees where the full graph has at most ten vertices. All edges have the same length. Some of these isospectral pairs have been used in the figures in the main text, with vertices of valence two removed. Note that the graphs in each set have to be normalised to the same length in order to be isospectral. These graphs were not tested for hot vertices.}
\label{fig:A1}
\end{figure}

\begin{figure}
\centering
\includegraphics[width=1.0\textwidth]{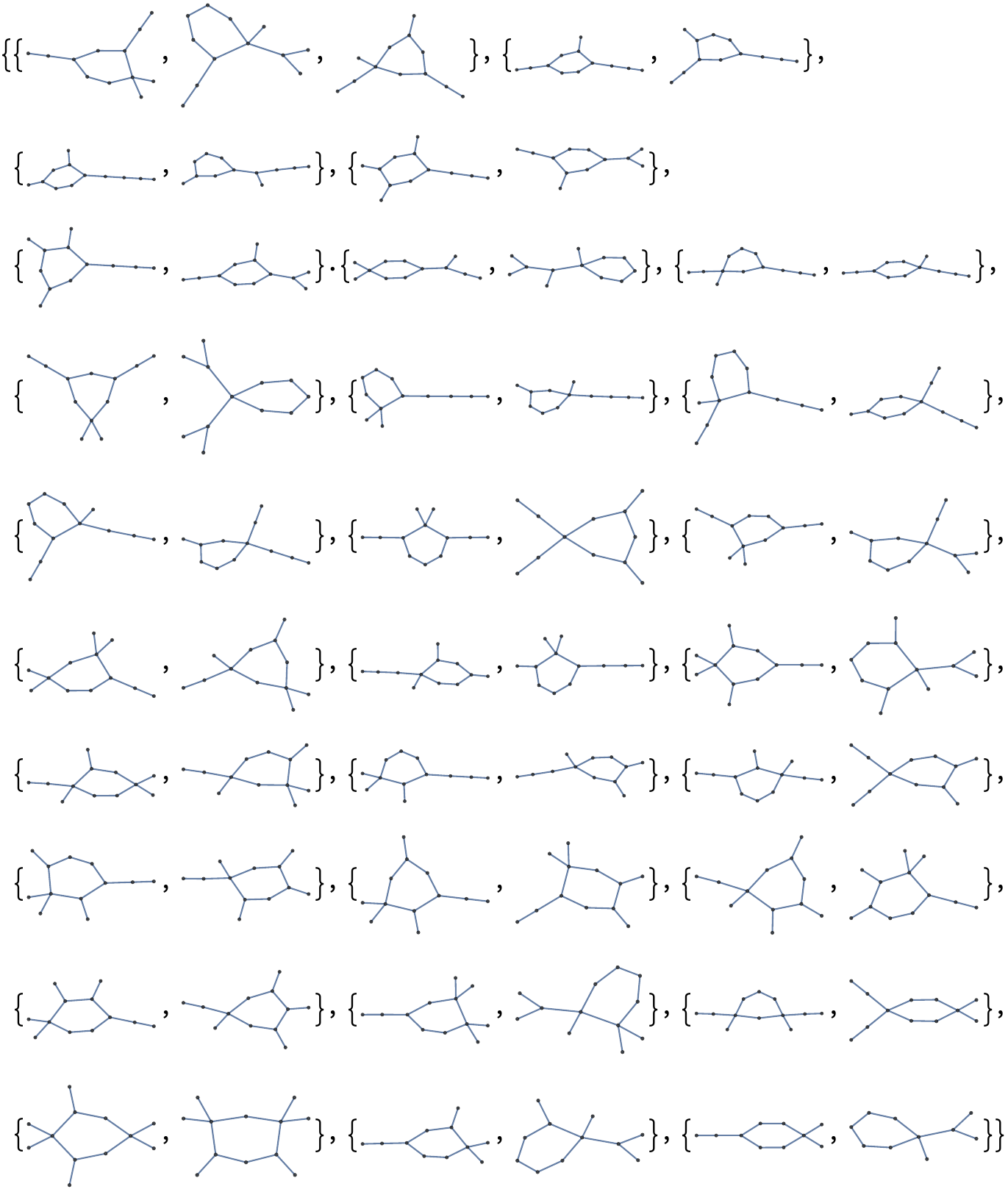}
\caption{All 28 isospectral sets formed from a loop with six vertices, decorated with pendant edges or pendant trees where the full graph has at most twelve vertices. All edges have the same length. Some of these isospectral pairs have been used in the figures in the main text, with vertices of valence two removed. One more isospectral pair was found but which is isomorphic with the first pair in Fig. \ref{fig:alooptriplets3} after rescaling to the same length. Note that the graphs in each set have to be normalised to the same length in order to be isospectral. These graphs were not tested for hot vertices.}
\label{fig:A2}
\end{figure}

\begin{figure}[!h]
\centering
\begin{center}
\includegraphics[scale=0.7,center]{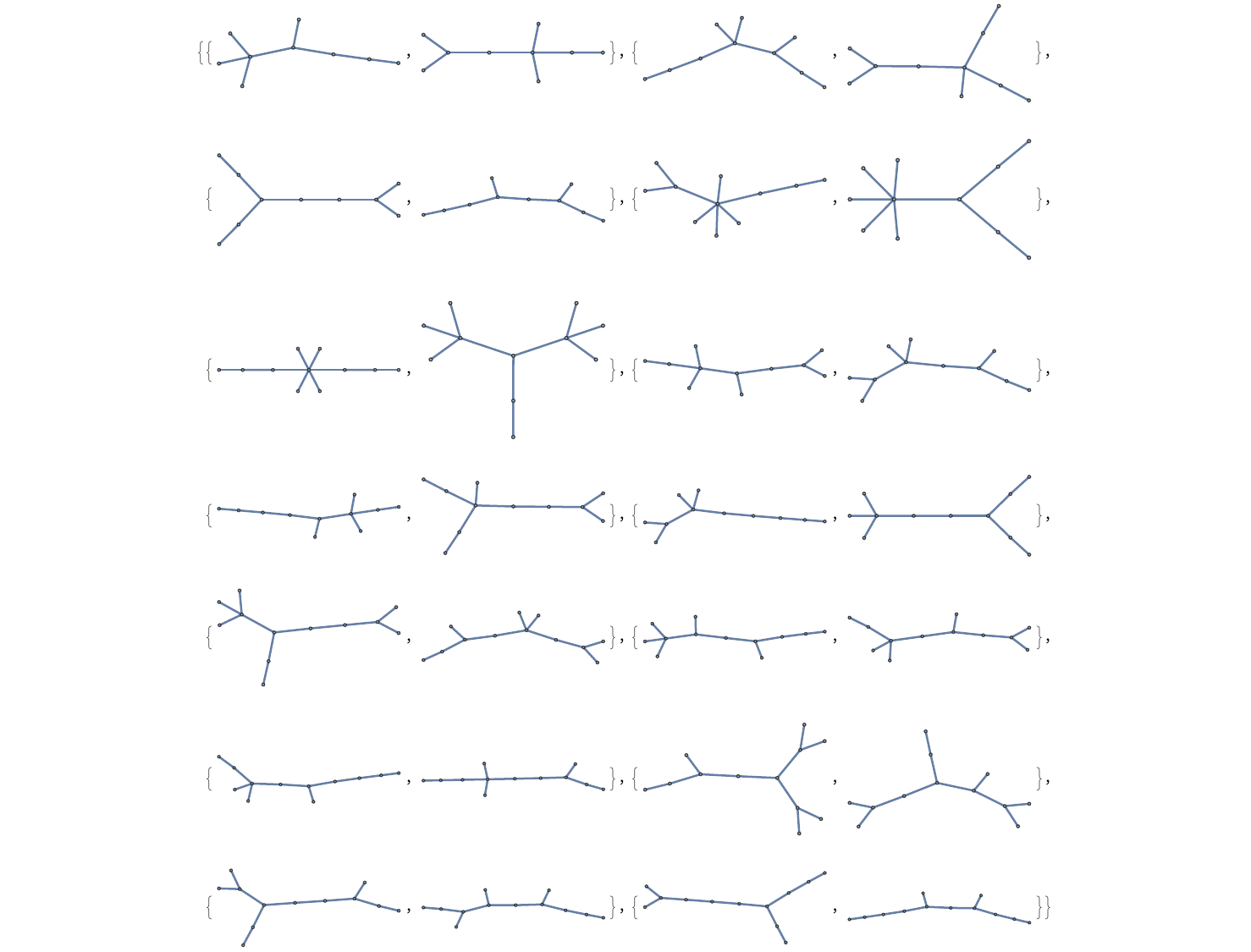}
\end{center}
\caption{ All isospectral pairs of trees with at most 12 vertices. All edges have the same length. The three first pairs have been used in figures in the main text, plotted differently. There is one isospectral pair with nine vertices, two isospectral pairs with 10 vertices, five isospectral pairs with 11 vertices and six isospectral pairs with 12 vertices.These graphs were tested for hot vertices within each isospectral pair and none was found.}
\label{fig:A3}
\end{figure}

\begin{figure}[!h]
\centering
\begin{center}
\includegraphics[scale=0.30,center]{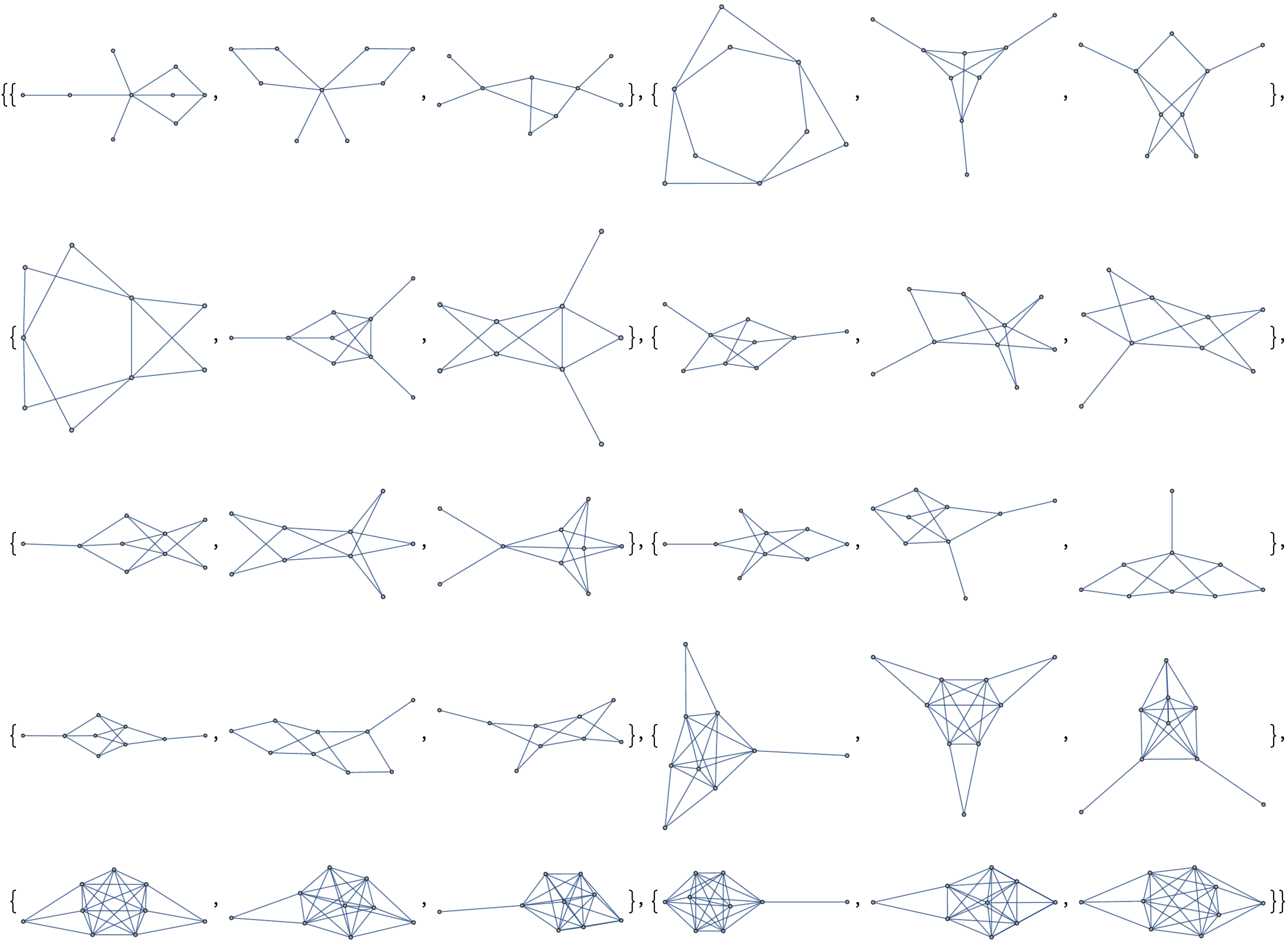}
\end{center}
\caption{All ten isospectral triplets with nine vertices. All edges have the same length. The three isospectral triplets with eight vertices are shown in the main text. These graphs were not tested for hot vertices.}
\label{fig:A4}
\end{figure}

\begin{figure}
\centering
\includegraphics[width=1.0\textwidth]{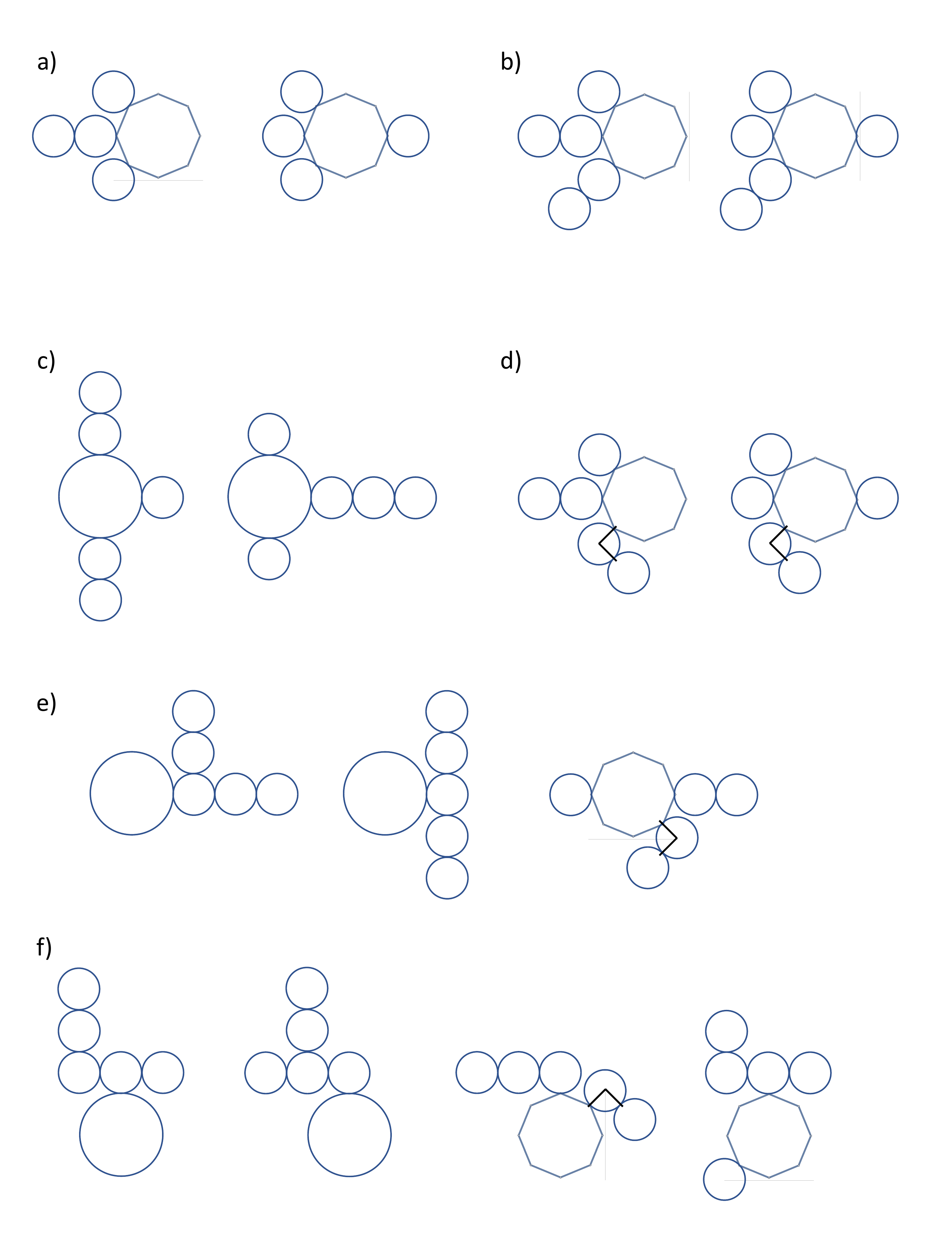}
\caption{ Examples of isospectral sets consisting of loops attached to loops. The length of the loops is four or eight and some loops are drawn as octagons to clarify the geometry. The angle between the black lines is $\pi/2$. a-d) are isospectral pairs, e) an isospectral triplet and f) an isospectral set of four. They inspired Fig. \ref{fig:dhotvertices}.}
\label{fig:A5}
\end{figure}

\begin{figure}
\centering
\includegraphics[width=1.0\textwidth]{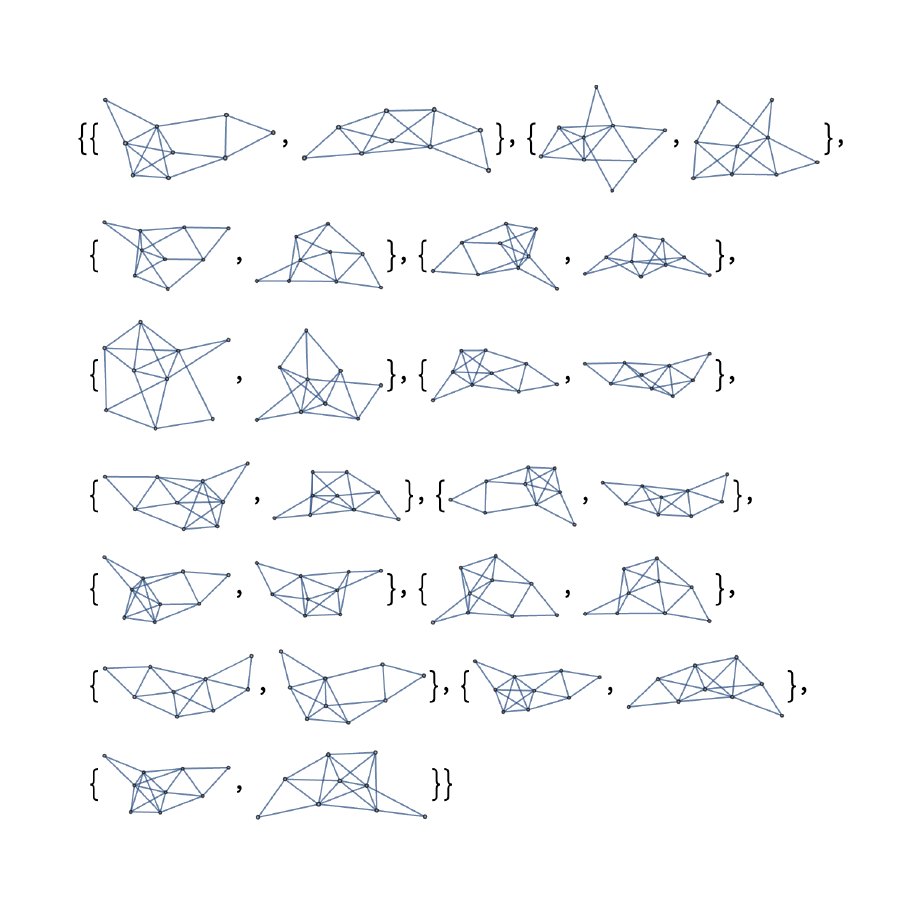}
\caption{These pairs of graphs are isospectral under $\delta_{3,4,5,6}(\alpha,\beta,\alpha,\beta)$ boundary conditions.}
\label{fig:A6}
\end{figure}

\end{document}